\documentclass[12pt]{amsart}

\usepackage{url}
\usepackage{graphicx}
\usepackage{amsmath}
\usepackage{amscd}
\usepackage{amsfonts}
\usepackage{amssymb}
\usepackage{color}
\usepackage{fullpage}
\usepackage{mathtools}
\usepackage{todonotes}
\usepackage{enumitem}
\usepackage{comment}
\usepackage{xparse}
\usepackage{nicefrac}
\usepackage[pagebackref,hypertexnames=false, colorlinks, citecolor=red,linkcolor=blue, urlcolor=red]{hyperref}
\usepackage{cleveref}

\numberwithin{equation}{section}

\newtheorem{theorem}{Theorem}[section]
\newtheorem{lemma}[theorem]{Lemma}
\newtheorem{proposition}[theorem]{Proposition}

\newtheorem{quest}[theorem]{Question}
\newtheorem{claim}[theorem]{Claim}
\newtheorem{corollary}[theorem]{Corollary}

\theoremstyle{definition}
\newtheorem{example}[theorem]{Example}
\newtheorem{remark}[theorem]{Remark}
\newtheorem{definition}[theorem]{Definition}
\newtheorem{problem}[theorem]{Problem}

\newcommand{\be}{\begin{equation}}
\newcommand{\ee}{\end{equation}}
\newcommand{\bes}{\begin{equation*}}
\newcommand{\ees}{\end{equation*}}

\newcommand{\ignore}[1]{}

\renewcommand{\Re}{\operatorname{Re}}

\newcommand{\Aut}{\operatorname{Aut}}

\DeclareDocumentCommand{\norm}{g} 
{\IfNoValueF{#1}{\lVert #1\rVert}
	\IfNoValueT{#1}{\lVert \cdot\rVert}}
\newcommand\av[1]{\left|#1\right|}
\DeclareDocumentCommand{\inp}{g} 
{\IfNoValueF{#1}{\langle \, #1 \, \rangle}
	\IfNoValueT{#1}{\langle \, \cdot\, ,\, \cdot \, \rangle}} 
\newcommand{\pare}[1]{\left({#1}\right)} 

\newcommand{\bB}{\mathbb{B}}

\newcommand{\bH}{\mathbb{H}}

\newcommand{\R}{\mathbb{R}}	    
\newcommand{\C}{\mathbb{C}}	    
\newcommand{\N}{\mathbb{N}}	    
\newcommand{\D}{\mathbb{D}}     

\newcommand{\cB}{\mathcal{B}}   
\newcommand{\cD}{\mathcal{D}}

\newcommand{\cH}{\mathcal{H}}

\newcommand{\cK}{\mathcal{K}}

\newcommand{\cS}{\mathcal{S}}

\newcommand\set[1]{\left\{#1\right\}}

\newcommand\on[1]{\operatorname{#1}}

\newcommand{\spa}{\on{span}}
\newcommand{\sm}{\smallsetminus}

\newcommand{\onto}{\xymatrix{\ar@{>>}[r]&}}
\newcommand{\ol}{\overline}

\DeclareDocumentCommand{\eqn}{d[]m}{
\begin{equation}
	\IfNoValueF{#1}{\label{eq:#1}}
	#2
\end{equation}
}

\newcommand{\figureref}[1]{\hyperref[#1]{Figure~\ref*{#1}}}
\newcommand{\tableref}[1]{\hyperref[#1]{Table~\ref*{#1}}}
\newcommand{\chapterref}[1]{\hyperref[#1]{chapter~\ref*{#1}}}
\newcommand{\Chapterref}[1]{\hyperref[#1]{Chapter~\ref*{#1}}}
\newcommand{\Chdotref}[1]{\hyperref[#1]{Ch.~\ref*{#1}}}
\newcommand{\appendixref}[1]{\hyperref[#1]{appendix~\ref*{#1}}}
\newcommand{\Appendixref}[1]{\hyperref[#1]{Appendix~\ref*{#1}}}
\newcommand{\sectionref}[1]{\hyperref[#1]{section~\ref*{#1}}}
\newcommand{\subsectionref}[1]{\hyperref[#1]{\S~\ref*{#1}}}
\newcommand{\exerciseref}[1]{\hyperref[#1]{Exercise~\ref*{#1}}}
\newcommand{\exampleref}[1]{\hyperref[#1]{Example~\ref*{#1}}}
\newcommand{\thmref}[1]{\hyperref[#1]{Theorem~\ref*{#1}}}
\newcommand{\propref}[1]{\hyperref[#1]{Proposition~\ref*{#1}}}
\newcommand{\lemmaref}[1]{\hyperref[#1]{Lemma~\ref*{#1}}}
\newcommand{\corref}[1]{\hyperref[#1]{Corollary~\ref*{#1}}}
\newcommand{\defnref}[1]{\hyperref[#1]{Definition~\ref*{#1}}}
\newcommand{\remarkref}[1]{\hyperref[#1]{Remark~\ref*{#1}}}

{\par\noindent\rule{\textwidth}{0.5pt}
\end{figure}}

\newcommand{\Mult}{\operatorname{Mult}}

\begin{document}
\title{Weighted composition operators and classification of weighted Hardy spaces}
\author{David Masuda}
\address{D.W., Technion Israel Institute of Technology\\
 Technion City, Haifa\; 3200003\\
 Israel}
\email{masudadavid@campus.technion.ac.il}

 \subjclass[2010]{46E22, 47B32}
 \keywords{Weighted composition operator, unitarily invariant space.}

\addcontentsline{toc}{section}{Abstract}
\begin{abstract}
In this paper, we shed light on basic questions such as:  
What is the ``right'' underlying set on which a function space lives, and is this set uniquely determined? 
What are the appropriate morphisms to classify function spaces? When does a weighted composition operator induce an isomorphism, and when does it induce an isometric isomorphism?
We first formulate suitable notions for domains of reflexive functional Banach spaces and study the corresponding notions of isomorphism between Hilbert function spaces. We then apply this framework to classify certain unitarily invariant spaces of holomorphic functions in several variables.
Our main results extend Hartz's results from the complete Pick setting to the general setting of weighted Hardy spaces and establish a simple relation between the kernels of isomorphic spaces, improving also on earlier work of Ofek and Sofer in several directions.
Additionally, we apply our results to investigate and estimate the Banach–Mazur distance of Hilbert function spaces.
\end{abstract}

\maketitle

\section{Introduction}
Hilbert spaces of functions form a natural meeting point of complex analysis and operator theory. 
It is a long-standing objective to understand their structure and to classify natural families of such spaces.
In this paper, we focus on the weighted Hardy spaces, which are unitary-invariant Hilbert function spaces on a ball in $\C^d$ that contain polynomials as a dense subset. This class of spaces includes many classical examples, such as the Drury--Arveson spaces \cite{hartz2023invitation, hartz2013operator} and the one-parameter family $\cH_s$, which we introduce briefly at the end of Section 5 and contains the Bergman space, the Hardy space, and the Dirichlet space. 

A meaningful classification of Hilbert function spaces must retain some of their function-theoretic structure. This leads naturally to the problem of identifying the appropriate morphisms between such spaces.
Henceforth, traditionally, the relationship between Hilbert function spaces is often studied via an RKHS isomorphism, whose adjoint is a weighted composition operator \cite{hartz2017isomorphism, OFEK, Gilad, salomon2016isomorphism, watted2025deformations}. 
Weighted composition operators have been studied much more extensively, primarily as operators acting on a single space, owing to their importance in the study of Banach spaces of functions \cite{CowenCarlC, hartz2025weighted, kumari2025composition, le2012self, lefevre2025characterization,  martin2019co, MR4225497}.

A well-known result (Proposition \ref{prop : isoalg}) reveals that an RKHS isomorphism induces an isomorphism between the multiplier algebras. 
On the one hand, in \cite[Corollary 9.4]{davidson2015multipliers}, Davidson, Hartz, and Shalit showed that the multiplier algebras $\Mult(\cH_s)$ for $s\le 0$ are mutually non-isomorphic, and hence the spaces $\cH_s$ are mutually non-isomorphic via RKHS isomorphism.
On the other hand, when looking outside the class of complete Pick spaces, one finds that two multiplier algebras can be isometrically isomorphic, yet the Hilbert function spaces that give rise to them need not be isomorphic (even in the weakest sense of this paper).
A good example of this phenomenon is the relation between the Hardy space and the Bergman space. Both have $H^\infty\pare{\D}$ as their multiplier algebra, and, in fact, the identity map from $H^\infty\pare{\D}$ to itself is a (completely isometric) isomorphism implemented by composition with the identity map on the disc. 
However, we show (in two ways) that the Hardy and Bergman spaces are not isomorphic through an invertible weighted composition operator.

While the inverse of an RKHS isomorphism is always an RKHS isomorphism, this is not the case for invertible weighted composition operators in general.
We illustrate this in several examples, where the most interesting is taken from \cite{McCarthy_2017}; see Example \ref{ex:Orr}. 
To keep the problem interesting, we should consider the basic example of the Hardy space restricted to $X=\set{\nicefrac{1}{k}}_{k=2}^\infty$.
In this situation, the map $C_\iota:f\mapsto f|_X$ is a bounded invertible composition operator whose inverse is not a weighted composition operator.
We identify sufficient conditions under which a weighted composition operator is the adjoint of an RKHS isomorphism. 
To this end, we define a maximal domain for reflexive functional Banach spaces by combining related definitions from \cite{MR3687947,CowenCarlC, hartz2017isomorphism, McCarthy_2017} and then consider the spaces as originally defined on such a domain.
We also show that a maximal domain always exists (Proposition \ref{Prop : exsit}), although there is no uniqueness (Example \ref{Ex : 1}).

Usually, weighted Hardy spaces are considered on the Euclidean unit ball $\bB_d$; see \cite{CowenCarlC,hartz2017isomorphism, Gilad}. We allow some flexibility and consider those spaces on their natural maximal domains, which may be smaller or larger than the unit ball. 
It turns out that every weighted Hardy space corresponds to a unique function, known as the generating function, which is closely related to its natural maximal domain; see Lemma \ref{lem : domof}.
We classify all algebraically consistent weighted Hardy spaces (Theorem \ref{thm : graet}) and deduce an analog to \cite[Corollary 3.4]{MR3687947}.
\begin{corollary}
    Let $\cH$ be a weighted Hardy space with the scalar Pick property and let $\rho$ be a bounded non-zero functional on $\cH$. Then the following are equivalent:
    \begin{enumerate}
        \item $\rho(\phi g)=\rho(\phi)\rho(g)$ for all $\phi\in\Mult(\cH)$ and all $g\in\cH$.
        \item $\rho(fg)=\rho(f)\rho(g)$ whenever $f,g\in\cH$ such that $fg\in\cH$.
    \end{enumerate}
\end{corollary}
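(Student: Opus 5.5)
The plan is to show that both conditions are equivalent to the statement that $\rho$ is evaluation at a point of the natural maximal domain of $\cH$. The direction (2)$\Rightarrow$(1) is immediate. Since $\cH$ is a weighted Hardy space, the polynomials lie in $\cH$, so $1\in\cH$ and every multiplier satisfies $\phi=\phi\cdot 1\in\cH$. Also $\phi g\in\cH$ for every $g\in\cH$. Hence (1) is the special case $f=\phi$ of (2).

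For (1)$\Rightarrow$(2), I first normalize $\rho$. Polynomials are multipliers and are dense in $\cH$. Taking $g=1$ in (1) gives $\rho(\phi)=\rho(\phi)\rho(1)$ for all $\phi\in\Mult(\cH)$. If $\rho(1)\neq 1$, then $\rho$ vanishes on the polynomials, hence on $\cH$ by density and continuity, which contradicts $\rho\neq 0$. So $\rho(1)=1$. Set $\lambda=(\rho(z_1),\dots,\rho(z_d))$. Repeated application of (1) gives $\rho(p)=p(\lambda)$ for every polynomial $p$. Thus $\rho$ is a bounded functional on $\cH$ that agrees with evaluation at $\lambda$ on the dense subspace of polynomials.

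The next step is to identify $\rho$ with a genuine point evaluation. Boundedness of $p\mapsto p(\lambda)$ in the norm of $\cH$ is exactly the condition that $\lambda$ belongs to the natural maximal domain. This is the content of the characterization via the generating function in Lemma \ref{lem : domof}, or equivalently the algebraic consistency established in Theorem \ref{thm : graet} for spaces with the scalar Pick property. Once this is in place, $\rho$ and $f\mapsto f(\lambda)$ are bounded functionals that agree on a dense set, so $\rho(f)=f(\lambda)$ for all $f\in\cH$. Then for $f,g\in\cH$ with $fg\in\cH$, the product $fg$ is the pointwise product on the domain, and
\[
\rho(fg)=(fg)(\lambda)=f(\lambda)g(\lambda)=\rho(f)\rho(g).
\]

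The main obstacle is this identification step. One must know that every $\lambda$ at which polynomial evaluation is $\cH$-bounded is actually a point of the domain on which $\cH$ is realized, including possible boundary points of the ball where the kernel still converges. Otherwise $\rho$ would be a bounded character not given by a point, and $(fg)(\lambda)$ would have no meaning. My first approach is to invoke Theorem \ref{thm : graet} directly, since the scalar Pick hypothesis is presumably what guarantees that $\cH$ is algebraically consistent on its maximal domain. Failing that, I would use Lemma \ref{lem : domof}: bounded evaluation at $\lambda$ is equivalent to convergence of the generating function at $\langle\lambda,\lambda\rangle$, and maximality of the domain then forces $\lambda$ into it.
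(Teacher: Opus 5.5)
The gap is the sentence ``Polynomials are multipliers and are dense in $\cH$.'' Density is fine, but in a general weighted Hardy space the coordinate functions need not be multipliers. For the Segal--Bargmann space $\cS$, $\Mult(\cS)$ consists only of constants. Your derivation of $\rho(p)=p(\lambda)$ by ``repeated application of (1)'' rests entirely on $z_i\in\Mult(\cH)$. (Your route to $\rho(\textbf{1})=1$ also does; that fact follows directly by taking $\phi=\textbf{1}$.)

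Your argument never uses the scalar Pick hypothesis, and without it the statement is false. On $\cS$ the functional $\rho(f)=\langle f,1+z\rangle=f(0)+f'(0)$ satisfies (1), since only constants are multipliers, yet $\rho(z^2)=0\neq 1=\rho(z)^2$. Appealing to Theorem~\ref{thm : graet} does not close the gap. That theorem does not say scalar Pick spaces are algebraically consistent. It only shows that algebraic consistency is equivalent to $z_i\in\Mult(\cH)$, or to density of $\Mult(\cH)$ in $\cH$. Deriving one of these from the Pick property is the whole content of the corollary.

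The missing idea is to use the Pick property to produce a dense set of multipliers. The paper restricts to a small ball $S=R\bB_d$ on which the generating function $G$ has no zeros. It then uses a result of Hartz to see that $\psi_w=1-1/k_w$ is a strictly contractive multiplier for $w\in S$, so $k_w=\sum_n\psi_w^n$ is a multiplier. Since $\{k_w:w\in S\}$ spans a dense subspace, $\Mult(\cH)$ is dense, and Theorem~\ref{thm : graet} gives $z_i\in\Mult(\cH)$. From there your argument goes through.

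The step you flag as the main obstacle is actually routine. Once $\rho(z^\alpha)=\lambda^\alpha$ for all $\alpha$, the representing vector $k$ of $\rho$ satisfies $\norm{k}^2=\sum_\alpha\av{\lambda^\alpha}^2/\norm{z^\alpha}^2=G(\av{\lambda}^2)$. Finiteness forces $\lambda$ into the natural maximal domain with $k=k_\lambda$, exactly as in the proof of Lemma~\ref{lem : domof}.
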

Motivated by the second paragraph, we aim to classify the weighted Hardy spaces up to an RKHS isomorphism, thereby complementing the first result in Ofek and Sofer's work \cite{Gilad}.
\begin{theorem}\cite[Theorem 1.2]{Gilad}\label{thm : gilad}
    Let $\cH_1$ and $\cH_2$ be weighted Hardy spaces in one variable.  
    Supposed that $G_1(t)=\sum_{n=0}^\infty a_nt^n$ is the generating function of $\cH_1$  and $G_2(t)=\sum_{n=0}^\infty b_nt^n$ is the generating function of $\cH_2$.
    Assume further that both $G_1$ and $G_2$ are holomorphic on $\bB_1$.
    Then $\cH_1$ and $\cH_2$ are isomorphic via an RKHS isomorphism $T$ such that $T^\ast=\alpha C_\varphi$ with $\varphi(\bB_1)=\bB_1$, if and only if there exist $c, C\in (0,\infty)$ such that $c\le\frac{a_n}{b_n}\le C$ for all $n$.

    In this case, $T$ is an isometric RKHS isomorphism if and only if $a_n=b_n$ for all $n$.
\end{theorem}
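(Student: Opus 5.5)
We work with the standard facts for weighted Hardy spaces. The reproducing kernel of $\cH_i$ is $K_i(z,w)=G_i(z\bar w)$, and the monomials $z^n$ are orthogonal with $\norm{z^n}^2_{\cH_1}=1/a_n$ and $\norm{z^n}^2_{\cH_2}=1/b_n$. An RKHS isomorphism $T:\cH_1\to\cH_2$ is a bounded invertible operator with $T^\ast=\alpha C_\varphi$, that is, $T^\ast f=\alpha\cdot(f\circ\varphi)$. Equivalently, $T K_2(\cdot,w)=\overline{\alpha(w)}K_1(\cdot,\varphi(w))$.

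\textbf{Sufficiency.} Assume $c\le a_n/b_n\le C$ for all $n$. Take $\varphi=\mathrm{id}$ and $\alpha\equiv 1$. Then $C_{\mathrm{id}}$ is the identity on polynomials, and $\norm{p}_{\cH_2}^2=\sum |p_n|^2/b_n$ is comparable to $\norm{p}^2_{\cH_1}=\sum|p_n|^2/a_n$, with constants $c$ and $C$. So the identity extends to a bounded invertible map $\cH_2\to\cH_1$, and its adjoint is an RKHS isomorphism $T$. If $a_n=b_n$ for all $n$, this map is unitary.

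\textbf{Necessity.} Suppose $T^\ast=\alpha C_\varphi$ is bounded and invertible with $\varphi(\bB_1)=\bB_1$.

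1. Since $T^\ast$ is invertible, $\varphi$ is injective. Indeed, the kernels $K_1(\cdot,w)$ at distinct points are linearly independent, and $T$ maps $K_2(\cdot,w)$ to multiples of them. Hence $\varphi$ is a biholomorphic automorphism of the disc, $\varphi=\lambda\,\varphi_a$ with $\varphi_a(z)=(a-z)/(1-\bar a z)$ and $|\lambda|=1$. The weight $\alpha=T^\ast 1$ lies in $\cH_1$.

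2. Reduce to the case $\varphi(0)=0$. Since $T$ is an RKHS isomorphism, Proposition \ref{prop : isoalg} shows that $\Mult(\cH_2)\ni\psi\mapsto\psi\circ\varphi$ is an isomorphism onto $\Mult(\cH_1)$. I would like to compose with a "canonical" isomorphism implementing $\varphi_a^{-1}$, but such a map need not exist on $\cH_1$ in general. I expect this reduction to be the main obstacle. My first attempt is a kernel comparison instead.

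The relation $TK_2(\cdot,w)=\overline{\alpha(w)}K_1(\cdot,\varphi(w))$ and invertibility give
\[
c\,K_2(z,w)\preceq \alpha(z)\overline{\alpha(w)}\,K_1(\varphi(z),\varphi(w))\preceq C\,K_2(z,w)
\]
as positive kernels. Evaluating at $z=w=0$ and $z=w=\varphi^{-1}(0)$ shows $\alpha$ is nonvanishing at these points.

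3. Average over rotations. If $\varphi(0)=0$, then $\varphi(z)=\lambda z$. Equating Taylor coefficients of diagonal blocks under the circle action $z\mapsto e^{i\theta}z$ (the kernels are rotation-invariant, so averaging preserves the positivity inequalities) gives $c\,b_n\le |\alpha_0|^2 a_n\le C\,b_n$ after isolating the $n$-th homogeneous component. For the general case, I would try to show $\varphi(0)=0$ can be forced. For instance, replace $T^\ast$ by $T^\ast U$, where $U$ is the unitary rotation, and average over the compact group to produce a new RKHS isomorphism commuting with rotations. Such an isomorphism must have $\varphi(0)=0$, and the invertibility estimates survive averaging, by convexity of the positivity cone. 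Alternatively, use the observation that a weighted composition isomorphism with a Möbius symbol forces comparability of $\cH_i$ with their Möbius-twisted versions. In either case, the coefficient comparison then gives $c\le a_n/b_n\le C$.

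\textbf{Isometric case.} If $T$ is isometric, the same averaged identity holds with equality, $|\alpha_0|^2a_n=b_n$. Since $a_0=b_0$ under the normalization $G_i(0)=1$, we get $|\alpha_0|=1$ and hence $a_n=b_n$.
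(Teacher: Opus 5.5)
Your sufficiency argument (the case $\gamma=1$ of Theorem~\ref{thm : big3}) and the ``if'' half of the isometric statement are fine. The necessity direction, however, has a genuine gap exactly where you suspected: nothing in the proposal reduces to $\varphi(0)=0$, and averaging over rotations cannot do it. Write $R_\theta(z)=e^{i\theta}z$. The average of $T^\ast C_{R_\theta}$ is the rank-one map $f\mapsto f(0)\,\alpha$, by the mean value property. The average of the conjugates $C_{R_{-\theta}}T^\ast C_{R_\theta}=W_{\alpha\circ R_{-\theta},\,R_\theta\circ\varphi\circ R_{-\theta}}$ is, for scalar $\alpha$, the diagonal operator $z^n\mapsto\alpha\,\widehat{\varphi^n}(n)\,z^n$. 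This is not a weighted composition operator, and it need not even be injective: for $\varphi(z)=\frac{a-z}{1-\bar a z}$ one finds $\widehat{\varphi^2}(2)=(1-\av{a}^2)(1-3\av{a}^2)$, which vanishes when $\av{a}^2=1/3$. So there is no averaged RKHS isomorphism to which ``commuting with rotations forces $\varphi(0)=0$'' could be applied.

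Convexity only preserves the two-sided bound $cK_1\preceq L\preceq CK_1$ for the Gram kernel $L(z,w)=\alpha(z)\overline{\alpha(w)}K_2(\varphi(z),\varphi(w))$. Mind the indices here: $Tk^1_w=\overline{\alpha(w)}\,k^2_{\varphi(w)}$, not the other way round. After averaging, this bound says only $c\,a_n\le\sum_m b_m\av{\widehat{(\alpha\varphi^m)}(n)}^2\le C\,a_n$. When $\varphi(0)\neq 0$ the middle sums mix all the $b_m$ (already the $n=0$ term is $\av{\alpha(0)}^2G_2(\av{\varphi(0)}^2)$), so no comparison of $a_n$ with $b_n$ comes out. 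The ``M\"obius-twisted'' alternative is not an argument.

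The missing idea is the disc trick of Proposition~\ref{lem : Orr}. Rotations act unitarily on both spaces, and $(T^\ast)^{-1}=W_{1/(\alpha\circ\varphi^{-1}),\,\varphi^{-1}}$ is again a weighted composition operator. Hence $T^\ast C_V(T^\ast)^{-1}C_UT^\ast:\cH_2\to\cH_1$ is an invertible weighted composition operator with composition symbol $\varphi\circ U\circ\varphi^{-1}\circ V\circ\varphi$. The closed curve $\theta\mapsto\varphi^{-1}(e^{i\theta}\varphi(0))$ passes through $0$ and surrounds $\varphi^{-1}(0)$. So for some rotation $V$ the point $\varphi^{-1}(V\varphi(0))$ has modulus $\av{\varphi^{-1}(0)}$, and a rotation $U$ carries it to $\varphi^{-1}(0)$. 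The new symbol then fixes $0$, and by Schwarz's lemma it is a rotation. In the cited theorem $\alpha$ is a scalar, so the new operator is $\alpha C_{R_s}$ for some $s$. Testing it and its inverse on $z^n$ gives $c\le a_n/b_n\le C$ at once.

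If you keep $\alpha$ a function, your Step 3 is also incomplete. The averaged lower bound is $c\,a_n\le\sum_{k\le n}\av{\alpha_k}^2b_{n-k}$, which does not isolate $\av{\alpha_0}^2b_n$. You must also run the upper estimate for the inverse operator, as Proposition~\ref{prop : end} does for $M_\lambda$ and $M_{1/\lambda}$.

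Finally, your isometric argument also rests on the missing reduction, but there it can be bypassed. For scalar $\alpha$, unitarity gives $1=K_1(z,0)=\av{\alpha}^2G_2(\varphi(z)\overline{\varphi(0)})$. Since $b_1>0$ and $\varphi$ is open, this forces $\varphi(0)=0$ (cf.\ Lemma~\ref{lem : last}). Then $G_1=\av{\alpha}^2G_2$, whence $\av{\alpha}=1$ and $a_n=b_n$.
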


We generalize (Theorems \ref{thm : big1}, \ref{thm : big2}, \ref{thm : big3}, \ref{thm : big 4}) this result in two principal directions:
\begin{itemize}
    \item Considering weighted composition operators rather than composition operators.
    \item Considering spaces defined on $r\bB_1$ with $r\in(0,\infty]$, or on $r\bB_d$ with $r\in(0,\infty)$ for arbitrary $d\ge 1$.
\end{itemize}
Broadly speaking, we establish a comparable relation between the weights of the generating functions when there is an RKHS isomorphism between spaces defined on balls with the same dimension.

At the end of the paper, we introduce the reproducing kernel Banach-Mazur distance, which originally appeared in \cite{OFEK}.
One can show that it always induces a distance function for finite-dimensional spaces with the same dimension.
Although it is not the case with general weighted Hardy spaces, we use our results to establish a valid distance function on certain equivalence classes; see Corollary \ref{cor:end}. 
Moreover, using methods developed in the paper, we find that spaces are close in the induced distance if and only if the weights of the generating functions are close; see Theorem \ref{thm:est}.

Most of the proofs in this paper rely on basic results from the theory of holomorphic functions, which can be found in \cite{curry2025tasty,rudin1980function}.
For more background about Hilbert function spaces, we refer the reader to \cite{Jim, Paulson}.
 
\section{Preliminaries and notation}
\subsection{Function spaces and kernels}
A Banach space $\cH$ of functions on a set $X$ is called \emph{a functional Banach space on $X$} if, for every $x \in X$, the point evaluation functional $f\mapsto f(x)$ is bounded. 
$\cH$ is called \emph{separating points} if $f(x)=f(y)$ for all functions in the space implies $x=y$.

When $\cH$ is a Hilbert space, we call it a \emph{Hilbert function space}, or \emph{reproducing kernel Hilbert space} (RKHS for short).
The Riesz representation theorem implies that for every $x\in X$ there exists $k_x\in \cH $ such that $f(x)=\inp{f,k_x}$ for all $f\in \cH$.
The \emph{reproducing kernel associated} to $\cH$ is the function $K : X \times X \to \C$ defined by $K(x,y)=k_y(x)=\langle k_y,k_x\rangle$.
We also have the relation $\norm{k_x}^2=\inp{k_x,k_x}=K(x,x)$.

If $X$ is a nonempty set and $K: X \times X \to \C$ is a function,  then $K$ is a called \emph{kernel on $X$} if for every finite subset ${x_1,...,x_n}$ of $X$, the matrix $[K(x_i,x_j)]$ is positive semi-definite. 

The reproducing kernel associated with a Hilbert function space is automatically a kernel on $X$.  
Moore \cite{moore1939generalanalysis2} showed that, just as every Hilbert function space on $X$ gives rise to a kernel on $X$, one can also start with a kernel function and construct a Hilbert function space with that function as its reproducing kernel.

\begin{proposition}
    Let $\cH$ be a Hilbert function space with reproducing kernel $K$, and let $m$ be a natural number. 
    The following are equivalent:
    \begin{enumerate}
        \item For any set of distinct point $x_1,...,x_m$, the vectors $k_{x_1},...,k_{x_m}$ are linearly independent;
        \item For any set of distinct point $x_1,...,x_m$, the matrix $[K(x_i,x_j)]$ is invertible;
        \item For any set of distinct point $x_1,...,x_m$, there exist functions, $p_1,...,p_m\in \cH$ satisfying 
        $$p_i(x_j)=\begin{cases} 1 & i=j \\ 0 &i\neq j \end{cases}.$$
    \end{enumerate}
\end{proposition}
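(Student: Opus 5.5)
Fix distinct points $x_1,\dots,x_m$ in $X$. Every condition refers to the same finite set, so I will prove the three equivalences for that fixed set and then quantify over all such sets. The basic tool is the Gram identity: for scalars $c_1,\dots,c_m$,
\begin{equation*}
\Bigl\|\sum_{j} c_j k_{x_j}\Bigr\|^2=\sum_{i,j}\overline{c_i}\,c_j\,\langle k_{x_j},k_{x_i}\rangle=\sum_{i,j}\overline{c_i}\,c_j\,K(x_i,x_j).
\end{equation*}
This uses $K(x,y)=\langle k_y,k_x\rangle$. So $[K(x_i,x_j)]$ is the Gram matrix of $k_{x_1},\dots,k_{x_m}$, up to transposition or conjugation.

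For (1)$\Leftrightarrow$(2), I use that a positive semidefinite matrix is invertible exactly when its quadratic form vanishes only at $0$. By the identity above, $c^\ast[K(x_i,x_j)]c=0$ if and only if $\sum_j c_jk_{x_j}=0$. Therefore the kernel vectors are linearly independent if and only if the Gram matrix has trivial null space, that is, if and only if it is invertible.

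For (3)$\Rightarrow$(1), suppose $\sum_j c_jk_{x_j}=0$. Pair this with $p_i$:
\begin{equation*}
0=\Bigl\langle p_i,\sum_j c_jk_{x_j}\Bigr\rangle=\sum_j\overline{c_j}\,p_i(x_j)=\overline{c_i}.
\end{equation*}
Hence every $c_i=0$, which is (1).

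For (2)$\Rightarrow$(3), I look for $p_i$ in the span of the kernel functions, namely $p_i=\sum_l a_{il}k_{x_l}$. Then
\begin{equation*}
p_i(x_j)=\sum_l a_{il}K(x_j,x_l).
\end{equation*}
Requiring $p_i(x_j)=\delta_{ij}$ becomes the matrix equation $A\,[K(x_j,x_l)]^{T}=I$. Since the kernel matrix is invertible, this has the solution $A=([K]^{T})^{-1}$. This is the only step that needs a construction, and even it amounts to inverting a matrix, so no real obstacle is expected.

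Since all three equivalences hold for each fixed set of $m$ distinct points, they hold for all such sets simultaneously.
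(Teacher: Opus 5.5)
Your proof is correct and follows essentially the same route as the paper: the Gram identity $\|\sum_j c_j k_{x_j}\|^2 = c^\ast[K(x_i,x_j)]c$ for the link between (1) and (2), solving a linear system with the kernel matrix to build the $p_i$ inside the span of the $k_{x_l}$, and pairing with $p_i$ for (3)$\Rightarrow$(1). The differences are cosmetic. You prove (1)$\Leftrightarrow$(2) in both directions, which is redundant given the cycle. You also construct the dual family directly, whereas the paper interpolates arbitrary data.
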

\begin{proof}
    $(1)\Rightarrow (2):$ Assume $[K(x_i,x_j)]$ is not invertible, and let $v=(\alpha_1,...,\alpha_m)^t$ such that $[K(x_i,x_j)]v=0$. Set $f=\sum \alpha_jk_{x_j}$ then
    $$\norm{f}^2=\sum_{i,j}\ol{\alpha_i}\alpha_j\inp{k_{x_j},k_{x_i}}_\cH=\sum_{i,j}\ol{\alpha_i}\alpha_jK(x_i,x_j)=\inp{[K(x_i,x_j)]v,v}_{\C^m}=0.$$
    Hence, $f=0$ and $k_{x_1},...,k_{x_m}$ are not linearly independent.

    $(2)\Rightarrow(3):$ Assume $[K(x_i,x_j)]$ is invertible, and let $v=(y_1,...,y_m)^t$. 
    We will show that there exists a function $f\in\cH$ such that $f(x_i)=y_i$ for $i=1,\dots ,m$.
    Let $w=(\alpha_1,...,\alpha_m)^t$ such that $[K(x_i,x_j)]w=v$. 
    Set $f=\sum \alpha_jk_{x_j}$ then for $i=1,...,m$ we have
    $$f(x_i)=\inp{f,k_{x_i}}_\cH=\sum_j \alpha_j K(x_j,x_i)=y_i.$$
    And $(3)$ follows.

    $(3)\Rightarrow(1):$ If $\sum_j \alpha_jk_{x_j}=0$ then for all $i=1,...,m$ we get
    $$0=\inp{p_i,\sum_j \alpha_jk_{x_j}}_\cH=\sum_j \ol{\alpha_j}\inp{p_i,k_{x_j}}_\cH=\sum_j \ol{\alpha_j}p_i(x_j)=\ol{\alpha_j}.$$
    So, $k_{x_1},...,k_{x_m}$ are linearly independent, and the proof is complete.
\end{proof}
We say that $\cH$ is \emph{$m$-interpolating} if any of the equivalent conditions of the above proposition hold.   
$\cH$ is called \emph{fully interpolating}, if it $m$-interpolating for all natural $m$ \cite[Definition 3.7]{Paulson}. 
We say that $\cH$ is \emph{normalized at the point} $x_0\in X$, if $K(x,x_0)=1$ for all $x\in X$. 
It is immediate that, for normalized spaces, the terms $2$-interpolating and separating point are equivalent.

We notice that a function $f \in \cH$ is orthogonal to the span of $\set{k_x:x\in X}$ if and only if $f(x)=\inp{f,k_x}=0$ for every $x \in X$, which is if and only if
$f = 0$. 
Hence, the set $\spa\set{k_x:x\in X}$ is dense in $\cH$.
Another fundamental result is that weak convergence implies pointwise convergence, i.e., if $f_n\xrightarrow[]{w}f$ then for each $x\in X$ we have
$$\lim_n f_n(x)=\lim_n\inp{f_n,k_x}=\inp{f,k_x}=f(x).$$

\subsection{Multipliers and composition operators}
Let $\cB_i$ be a functional Banach space defined on $X_i$, $i=1,2$.
A \emph{multiplier} of $\cB_1$ into $\cB_2$ is a function $\lambda$ with the property that $\lambda f\in\cB_2$ for all $f\in\cB_1$. We let $\on{Mult}\pare{\cB_1,\cB_2}$ denote the set of all multipliers of $\cB_1$ into $\cB_2$ and $\Mult(\cB)=\Mult(\cB,\cB)$ for short.
For a multiplier $\lambda\in\Mult(\cB_1,\cB_2)$, we let 
$M_\lambda:\cB_1\to\cB_2$ denote the linear map $f\mapsto \lambda f$, 
and call such an operator \emph{the multiplication operator by $\lambda$}.
Notice that when $\cB_1$ separates points, there is a one-to-one correspondence between $\Mult(\cB_1,\cB_2)$ and the set of multiplication operators.
In particular, if $\cH$ is normalized, then $\Mult(\cH)\subset \cH$, and the inclusion is contractive.

Another important class of operators consists of \emph{composition operators} $C_\varphi$, which are defined by $f\mapsto f\circ\varphi$. 
For more details, we refer the reader to \cite[Chapter 5]{Paulson} and \cite{Jim, CowenCarlC}.

\subsection{The Pick property}
 A Hilbert function space $\cH$ with reproducing kernel $K$ on $X$ has \emph{the scalar Pick property} if whenever $x_1,...,x_m\in X$ are distinct points and $y_1,\dots,y_m\in\C$ are given such that the Pick matrix
    $$[(1-y_i\ol{y_j})K(x_i,x_j)]$$
is positive semi-definite, there exists a multiplier $\lambda$ in the closed unit ball of $\Mult(\cH)$ such that $\lambda(x_i) = y_i$ for $i = 1,..., m$.

When the analogous result for matrix-valued interpolation holds, $\cH$ is said to be a complete Pick space; for more details, see \cite{Jim}.

\subsection{$\C^d$ notations}
For $d\in\N$ and $r\in(0,\infty]$, we denote the open ball $\set{z\in\C^d:\av{z}<r}$ by $r\bB_d$ and the closed ball $\set{z\in\C^d:\av{z}\le r}$ by $\ol{r\bB_d}$, of course $r\bB_d=\ol{r\bB_d}=\C^d$ when $r=\infty$.
When $d=1$, we denote by $\D$ the unit disc $\bB_1$.
We denote by $\mathcal{U}(d)$ the set of all unitary maps on $\C^d$.
For a ball $X\subseteq \C^d$ (closed or open), $\on{Aut}(X)$ denotes the group of the bi-holomorphic maps of $\on{int}(X)$, and $H^\infty(X)$ denotes the holomorphic functions on $\on{int}(X)$ that are also continuous and uniformly bounded on $X$.
Every unitary on $\C^d$ yields an element of $\on{Aut}(\bB_d)$, and by Cartan's uniqueness theorem, every automorphism fixing the origin is unitary \cite[Corollary 1.5.2]{curry2025tasty}.
We adopt the following multi-index notation: if $\alpha=(\alpha_1,\dots,\alpha_d)$ is a multi-index of non-negative integers, then $\av{\alpha}:=\alpha_1+\dots+\alpha_d$ and $\alpha!:=\alpha_1!\cdots\alpha_d!$ and for $z=(z_1,\dots,z_d)\in\C^d$, $z^\alpha:=z_1^{\alpha_1}\cdots z_d^{\alpha_d}$ and $\ol{z}^\alpha:=\ol{z_1}^{\alpha_1}\cdots \ol{z_d}^{\alpha_d}$.

\section{Domains}
In this section, $\cB$ denotes a reflexive functional Banach space that separates points of $X$.

When thinking of $\cB$ as just a set of functions, we can always consider functions defined only on a subset of $X$. However, when we do so, we sometimes change the structure of the space, for example, when the restriction map is not injective.
This section discusses the domains worth considering. 
We must warn the reader that the following definitions have similar but subtly differing variants in the literature; see \cite{MR3687947, CowenCarlC, hartz2017isomorphism, McCarthy_2017,  MR4225497}.
We adopt the term maximal domain as in \cite{MR3687947, MR4225497}, which seems to have become conventional.

\begin{definition}
    A nonzero bounded linear functional $\rho$ on $\cB$ is called \emph{a multiplicative functional} if $\rho(fg) = \rho(f)\rho(g)$ whenever $f, g$, and the product $fg$ are in $\cB$.
    We say that \emph{$X$ is a maximal domain for $\cB$} if for every multiplicative functional $\rho$ on $\cB$, there exists $x\in X$ such that $\rho(f)=f(x)$ for all $f\in \cB$.
\end{definition}
Observing that for every point in $X$, the associated evaluation functional is a multiplicative functional, we obtain:
\begin{proposition}\cite[Proposition 7]{McCarthy_2017}\label{Prop : exsit}
    Let $\cB$ be as above. 
    There exists a functional Banach space $\widehat{\cB}$ with a maximal domain $\widehat{X}$ and an injective map $\varphi: X\to \widehat{X}$ such that $C_\varphi:\widehat{\cB}\to \cB$ is an isometric isomorphism. Moreover, when $\cB$ is a Hilbert space, $\widehat{\cB}$ is also a Hilbert space.
\end{proposition}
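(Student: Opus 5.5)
The construction I would use takes $\widehat{X}$ to be the set of all multiplicative functionals on $\cB$. Each $\rho\in\widehat X$ is a nonzero element of $\cB^*$, so we may view elements of $\cB$ as functions on $\widehat X$. Define $\Gamma:\cB\to \C^{\widehat X}$ by $\Gamma(f)(\rho)=\rho(f)$, and set $\widehat\cB=\Gamma(\cB)$ with norm $\norm{\Gamma f}:=\norm{f}$. The map $\varphi:X\to\widehat X$ sends $x$ to the evaluation functional $\varepsilon_x(f)=f(x)$; this is multiplicative because products are computed pointwise, and nonzero provided no point is a common zero of $\cB$. I would adopt this standing convention, or else simply discard the common zeros; they are harmless, since all functions vanish there.

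The key steps, in order, are as follows.
\begin{enumerate}
\item $\Gamma$ is injective. If $\rho(f)=0$ for all $\rho\in\widehat X$, then in particular $f(x)=\varepsilon_x(f)=0$ for all $x$, so $f=0$. Hence $\widehat\cB$ is a Banach space of functions on $\widehat X$, isometric to $\cB$. Its point evaluations are bounded, since $\av{\Gamma f(\rho)}=\av{\rho(f)}\le\norm{\rho}\norm{f}$. If $\cB$ is a Hilbert space, then so is $\widehat\cB$, with the transported inner product.
\item $\varphi$ is injective. This follows because $\cB$ separates points: $\varepsilon_x=\varepsilon_y$ forces $x=y$.
\item $C_\varphi\Gamma f=f$, because $(\Gamma f)(\varphi(x))=\varepsilon_x(f)=f(x)$. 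Thus $C_\varphi=\Gamma^{-1}$ is an isometric isomorphism $\widehat\cB\to\cB$.
\item $\widehat X$ is a maximal domain for $\widehat\cB$. Let $\sigma$ be a multiplicative functional on $\widehat\cB$, and put $\rho=\sigma\circ\Gamma\in\cB^*$, which is nonzero. I need to check that $\rho$ is multiplicative on $\cB$, and then $\sigma(\Gamma f)=\rho(f)=(\Gamma f)(\rho)$ with $\rho\in\widehat X$, as required.
\end{enumerate}

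The main obstacle is the compatibility of products in step~4. Suppose $f,g,fg\in\cB$, where the product is pointwise on $X$. I want $\Gamma(fg)=\Gamma(f)\Gamma(g)$ pointwise on $\widehat X$, i.e.\ $\rho'(fg)=\rho'(f)\rho'(g)$ for every $\rho'\in\widehat X$, which is exactly the multiplicativity of $\rho'$. Conversely, if $\Gamma f\cdot\Gamma g=\Gamma h$ in $\widehat\cB$, then composing with $\varphi$ gives $fg=h$ on $X$. Hence the multiplicative structures of $\cB$ and $\widehat\cB$ correspond exactly under $\Gamma$, and pulling $\sigma$ back along $\Gamma$ yields a multiplicative functional on $\cB$.

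Reflexivity and the separation hypothesis enter mainly in step~2 and to make $\widehat\cB$ well defined as a functional space. I would also check that $\widehat\cB$ separates points of $\widehat X$: distinct functionals $\rho_1\neq\rho_2$ differ on some $f$, so $\Gamma f$ separates them. This keeps $\widehat\cB$ in the standing class of the section.
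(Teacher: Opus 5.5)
Your proof is correct and follows the paper's construction exactly: $\widehat X$ is the set of multiplicative functionals, $\varphi(x)=\on{ev}_x$, $\widehat\cB$ is the image of $f\mapsto\hat f$, $C_\varphi$ is its inverse, and the compatibility $\widehat{fg}=\hat f\hat g$ gives maximality. Two small remarks. First, your explicit pullback $\rho=\sigma\circ\Gamma$ is what the paper's appeal to reflexivity amounts to, so reflexivity is never actually used, contrary to your closing remark. Second, your caveat about a common zero of $\cB$ addresses a point the paper passes over silently; instead of discarding that point, you can send it to the zero functional adjoined to $\widehat X$, which keeps $\varphi$ defined on all of $X$ and preserves both separation and maximality.
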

\begin{proof}
    Define $\widehat{X}=\set{\rho\in \cB^*: \rho \text{ is a multiplicative functional}}$ and $\varphi:X\to \widehat{X}$ by $\varphi(x)=\on{ev}_x$.
    For $f\in\cB$ we denote by $\hat{f}$ the corresponding functional in $\cB^{**}$.
    Consider the space $$\widehat{\cB}=\set{\hat{f}:\hat{X}\to \C:f\in\cB}\subseteq \cB^{**}$$
    as the functional Banach space defined on $\widehat{X}$. 
    Notice that $\hat{f}(\varphi(x))=f(x)$ and the map $f\mapsto \hat{f}$ is isometric isomorphism which its inverse is $C_\varphi$. 
    Additionally, if $\hat{f},\hat{g}\in \widehat{\cB}$ are such that $\hat{f}\hat{g}\in\widehat{\cB}$ then $fg\in \cB$ and $\widehat{fg}=\hat{f}\hat{g}$, so by the reflexive assumption $\widehat{X}$ is a maximal domain for $\widehat{\cB}$.
\end{proof}
Some readers might already understand the worth of maximal domains in the question of whether the inverse of a weighted composition operator is another weighted composition operator.

In other words, $X$ is a maximal domain for $\cB$ when the map $\varphi: X\to \hat{X}$ is a bijection. 
We also understand $\widehat{X}$ as a maximal extension of $X$, which allows us to think of $\cB$ as a functional Banach space originally defined on $\widehat{X}$.
Notice that the construction of $\widehat{X}$ endows it with some topologies. We would like to identify $\widehat{X}$ with a concrete subset of $\C^d$:
\begin{proposition}\cite[Proposition 8]{McCarthy_2017}
    Let $\cB$ be as above and suppose that the algebra generated by $\phi_1,\dots,\phi_d$ is contained in and dense in $\cB$. Then $\widehat{X}$ can be identify as 
    $$\widehat{X}=\set{(\rho(\phi_1),\dots,\rho(\phi_d)):\rho \text{ is a multiplicative functional}}$$
\end{proposition}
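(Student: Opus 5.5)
The plan is to show that the map $\Phi:\widehat{X}\to\C^d$, $\Phi(\rho)=(\rho(\phi_1),\dots,\rho(\phi_d))$, is injective. Then $\widehat{X}$ is identified with its image, and each $\hat f\in\widehat{\cB}$ becomes the function $\hat f\circ\Phi^{-1}$ on that image. This makes $\widehat{\cB}$ a functional Banach space on the stated subset of $\C^d$. Since $\Phi\circ\varphi(x)=(\phi_1(x),\dots,\phi_d(x))$, the point $x\in X$ is sent to the tuple of its generator values, which is the expected identification.

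To prove injectivity, take multiplicative functionals $\rho,\sigma$ with $\rho(\phi_i)=\sigma(\phi_i)$ for all $i$. The first step is to show that a multiplicative functional is determined on monomials by its values on the generators. By hypothesis every monomial $p=\phi_{i_1}\cdots\phi_{i_k}$ lies in $\cB$. So does every partial product $\phi_{i_1}\cdots\phi_{i_j}$, being an element of the algebra generated by $\phi_1,\dots,\phi_d$. Applying multiplicativity repeatedly to the pairs $f=\phi_{i_1}\cdots\phi_{i_{j}}$ and $g=\phi_{i_{j+1}}$, where $f$, $g$ and $fg$ all lie in $\cB$, gives $\rho(p)=\prod_j\rho(\phi_{i_j})$, and likewise for $\sigma$. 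Hence $\rho(p)=\sigma(p)$ for every monomial. By linearity $\rho$ and $\sigma$ agree on the whole algebra generated by the $\phi_i$.

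The final step is a density argument: $\rho$ and $\sigma$ are bounded linear functionals agreeing on a dense subset of $\cB$, so $\rho=\sigma$. Thus $\Phi$ is injective, and the identification follows.

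Two points need attention. The first is the constant function $1$, if the generated algebra is meant to be unital. If $1\in\cB$, then $\rho(1)=\rho(1\cdot 1)=\rho(1)^2$, so $\rho(1)\in\{0,1\}$. If $\rho(1)=0$, then $\rho(f)=\rho(1\cdot f)=0$ for all $f$ in the algebra, and by density $\rho=0$, contradicting that $\rho$ is nonzero. So $\rho(1)=1=\sigma(1)$. The second is checking that the products really stay in $\cB$, which is exactly what "contained in $\cB$" guarantees. Beyond these, I expect no serious obstacle: the argument is essentially that a bounded multiplicative functional is determined by its values on generators of a dense subalgebra.
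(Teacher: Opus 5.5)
Your proof is correct. Injectivity of $\rho\mapsto(\rho(\phi_1),\dots,\rho(\phi_d))$ follows exactly as you describe, from multiplicativity on monomials (all partial products lie in $\cB$), linearity, and boundedness plus density. Your treatment of the constant function agrees with the remark the paper makes right after the statement: $\rho(\mathbf{1})=1$ is forced because $\rho\neq 0$.

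The paper gives no proof of its own here; it only cites McCarthy--Shalit. Your argument is the standard one behind that result.
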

If one of the $\phi_i$’s equals the constant function $\textbf{1}$, then it can be omitted in the above construction since $\rho(f)=\rho(\textbf{1})\rho(f)$ force $\rho(\textbf{1})=1$. 
Thus, if $\cB$ contains the algebra of polynomials with $d$ variables as a dense subspace, we can identify a subset of $\C^d$ as a maximal domain for it.

\begin{example}\label{Ex : 1}
    Although we show the existence of a maximal domain, it turns out it is not unique in some sense:
    
    Let $\cH$ and $\cK$ be the spaces of all polynomials of degree at most $2$, with the set $\set{1,z,z^2}$ forming a complete orthonormal basis.
    Consider $\cH$ as a space of functions on $\C$ and $\cK$ as a space of functions on $X=\set{0,1,2}$.
    Notice that the map $p\mapsto p|_X$ is clearly a bijection and is induced by a composition operator $C_\iota:\cH\to\cK$, where $\iota: X\to \C$ is the inclusion map.
    It is easy to see that every multiplicative functional on $\cH$ arises from pointwise evaluation and vice versa.
    If $\rho$ is a multiplicative functional on $\cK$, set $t=\rho(z)$ then $\rho(az^2+bz+c)=at^2+bt+c$. We show that $t\in X$.
    For $f(z)=z,g(z)=z^2$ and $h(z)=3z^2-2z$, we see that $f|_Xg|_X=h|_X$ (while $fg\notin \cH$), so
    $$t^3=\rho(f|_X)\rho(g|_X)=\rho(h|_X)=3t^2-2t.$$
    Solving for $t$, we obtain that $t\in X$. Therefore, $X$ is a maximal domain for $\cK$.
    
    Notice that both $\cH$ and $\cK$ are $2$-interpolating, and using this idea, one can find an example of a set of functions that define isometrically isomorphic spaces that are $m$-interpolating with different maximal domains. 

    At first glance, we wish for those spaces to be isomorphic via an RKHS isomorphism. 
    However, when looking carefully, we see that these spaces have different structures. 
    This example demonstrates that different maximal domains can imply different multiplication structures (here, $\cK$ is an algebra) and highlights a problem with the terminology of maximal domain. 
\end{example}
     
    Example \ref{Ex : 1} shows that a subset $S\subset X$ should be called faithful if the map $f\mapsto f|_S$ is injective and $f|_Sg|_S=h|_S$ if and only if $h=fg$, that is, the map $f\mapsto f|_S$ is a multiplicative unitary in the sense of \cite[Definition 12]{McCarthy_2017}.
    We already saw in Proposition \ref{Prop : exsit} that $X\subseteq\widehat{X}$ should be a faithful subset, and in fact, a modification of \cite[Proposition 17]{McCarthy_2017} shows that these are equivalent: 
\begin{proposition}
    Assume that $X$ is a maximal domain for $\cB$. 
    For $S\subset X$, define $$\cB_S=\set{f|_S: f\in \cB}.$$
    The following are equivalent 
        \begin{enumerate}
            \item $\widehat{S}=\widehat{X}$ and $\widehat{\cB_S}=\widehat{\cB}$.
            \item The map $f\mapsto f|_S$ is isometric isomorphism and $f|_Sg|_S=h|_S$ if and only if $h=fg$.
        \end{enumerate}
\end{proposition}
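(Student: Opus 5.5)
We first fix what the identifications in (1) mean. Here $\cB_S$ is normed by transport, $\norm{f|_S}:=\norm{f}$, or by the quotient norm when restriction is not injective. $\widehat{S}$ and $\widehat{X}$ are the sets of multiplicative functionals on $\cB_S$ and on $\cB$. The equalities in (1) are read through the construction of Proposition \ref{Prop : exsit}: the natural map $R^\ast:\rho\mapsto\rho\circ R$, where $R:f\mapsto f|_S$, sends $\widehat{S}$ onto $\widehat{X}$ bijectively, and $\widehat{\cB_S}$ coincides with $\widehat{\cB}$ as a space of functions on $\widehat{X}\cong\widehat{S}$. Under this reading the proof reduces to comparing the two multiplicative structures via $R$.

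For $(2)\Rightarrow(1)$, assume $R$ is an isometric isomorphism with $f|_Sg|_S=h|_S\iff h=fg$. Let $\rho$ be multiplicative on $\cB_S$ and put $\tilde\rho=\rho\circ R$. Whenever $f,g,fg\in\cB$, we have $f|_Sg|_S=(fg)|_S\in\cB_S$, so
\[
\tilde\rho(fg)=\rho(f|_S)\rho(g|_S)=\tilde\rho(f)\tilde\rho(g).
\]
Hence $\tilde\rho$ is multiplicative on $\cB$. Conversely, given $\sigma$ multiplicative on $\cB$, set $\rho=\sigma\circ R^{-1}$. If $f|_S,g|_S$ and their product $h|_S$ lie in $\cB_S$, the hypothesis gives $h=fg$, so $\rho$ is multiplicative. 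Thus $\rho\mapsto\rho\circ R$ is a bijection $\widehat{S}\to\widehat{X}$. For $\rho$ and $\tilde\rho$ corresponding, $\widehat{f|_S}(\rho)=\rho(f|_S)=\hat f(\tilde\rho)$, so the hat-spaces coincide. Since $R$ is isometric, they coincide isometrically.

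For $(1)\Rightarrow(2)$, we use that $X$ is a maximal domain, so $\widehat{X}=X$ via evaluations. The identification $\widehat{\cB_S}=\widehat{\cB}$ means that for each $f\in\cB$ the function $\widehat{f|_S}$ on $\widehat{S}=X$ is $f$ itself.

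1. \emph{Injectivity.} If $f|_S=0$, then $\widehat{f|_S}=0$, so $f=0$. The isometry and surjectivity of $R$ come from the isometric identification of the two hat-spaces together with the isometric maps $f\mapsto\hat f$ of Proposition \ref{Prop : exsit}.

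2. \emph{The easy direction of the product condition.} If $h=fg$, then clearly $h|_S=f|_Sg|_S$.

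3. \emph{The converse.} Suppose $f|_Sg|_S=h|_S$. In the proof of Proposition \ref{Prop : exsit} we saw that products in $\cB_S$ transfer to products of hats on $\widehat{S}$: $\widehat{f|_S}\,\widehat{g|_S}=\widehat{h|_S}$ pointwise on $\widehat{S}$, since each point of $\widehat{S}$ is a multiplicative functional. Under the identification $\widehat{S}=X$ with matching hat-spaces, this reads $f(x)g(x)=h(x)$ for all $x\in X$, so $h=fg$ as functions on $X$.

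The main obstacle is the precise meaning of the equalities in (1). The argument needs the identification $\widehat{S}\cong\widehat{X}$ to be the one induced by $R$, compatible with evaluations, so that $\widehat{f|_S}$ corresponds to $\hat f$. We would fix this convention at the start, as above. With it, step 3 is essentially the observation that every point of $\widehat{S}$ is multiplicative on $\cB_S$, and everything else is bookkeeping.
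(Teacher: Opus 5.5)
Your proposal is correct and follows essentially the paper's route. For $(2)\Rightarrow(1)$, pulling multiplicative functionals on $\cB_S$ back along restriction and using the product condition to push those on $\cB$ forward is exactly the paper's argument via the identification $\cB_S^\ast\cong\cB^\ast$; for $(1)\Rightarrow(2)$, the paper simply cites Proposition \ref{Prop : exsit}, and your steps 1--3 (with the identification $\widehat{S}\cong\widehat{X}$ taken to be the one induced by restriction) are a faithful unpacking of that citation.
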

\begin{proof}
     $(1) \Rightarrow (2)$ is Proposition \ref{Prop : exsit}. Assume $(2)$ holds. Note that $\cB_S^*$ can be identified isometrically with $\cB^*$, so $\widehat{S}\subseteq \widehat{X}$.
     Moreover, if $\rho\in \widehat{X}$ and $f|_Sg|_S=h|_S$, then
     $$\rho(h|_S)=\rho(h)=\rho(f)\rho(g)=\rho(f|_S)\rho(g|_S).$$
     Hence, $\rho\in \widehat{S}$ and $(1)$ holds.
\end{proof}
\begin{definition}
    Assume that $X$ is a maximal domain for $\cB$. 
    A subset $S\subset X$ is called \emph{a faithful subset} if one of the equivalent conditions of the above proposition holds.  
\end{definition}
    In other words, the maximal domain defines the multiplication in the space. With this definition, we now understand the terminology of maximal domains.

    \medskip
Now assume that $\textbf{1}\in \cB$ and $X$ is a maximal domain for $\cB$.
\begin{definition}
    A nonzero bounded linear functional $\rho$ on $\cB$ is called \emph{a partially multiplicative functional} if $\rho(\phi g) = \rho(\phi )\rho(g)$ whenever $\phi \in \Mult(\cB)$ and $g\in \cB$. 
    We say that $\cB$ is \emph{algebraically consistent on the maximal domain $X$} if every partially multiplicative functional $\rho$ on $\cB$ is a multiplicative functional.
\end{definition}
We can find some examples of algebraically consistent spaces (with our definitions) in \cite[Corollary 3.4]{MR3687947} and \cite[Theorem 1.7]{MR4225497}.
Theorem \ref{thm : graet} below classifies the algebraically consistent weighted Hardy spaces on the natural maximal domain.

From now on, although we can formulate some of the results for functional Banach spaces, we consider only Hilbert function spaces.

\section{ Weighted composition operators}
In this section, $\cH_i$ denotes a Hilbert function space defined on $X_i$ with reproducing kernel $K_i$, $i=1,2$. We address the cases where the space is defined on a maximal domain.

\begin{definition}
    A \emph{weighted composition operator} $W_{\lambda,\varphi}:\cH_2\to \cH_1$ is an operator defined by $f\mapsto \lambda (f\circ\varphi)$, where $\varphi: X_1\to X_2$ is called \emph{the composition symbol of $W_{\lambda,\varphi}$} and $\lambda:X_1\to \C$ is called \emph{the multiplier symbol of $W_{\lambda,\varphi}$}. 
\end{definition}
The following theorem extends \cite[Section 5]{Paulson} and shows that a well-defined weighted composition operator is bounded. 
\begin{proposition}\label{Thm : bound}
    Let $\cH_1$ and $\cH_2$ be as above.
    For $\varphi: X_1\to X_2$ and $\lambda:X_1\to \C$ the following are equivalent:
    \begin{enumerate}
        \item $\set{\lambda \cdot f\circ\varphi:f\in\cH_2 }\subseteq \cH_1$.
        \item $W_{\lambda,\varphi}:\cH_2\to \cH_1$ is a bounded operator. 
        \item there exists a constant $c\ge0$, such that $c^2K_1(x,y)-\ol{\lambda(y)}K_2(\varphi(x),\varphi(y))\lambda(x)$ is kernel function on $X_1$. 
    \end{enumerate}
    Moreover, in these cases, $\norm{W_{\lambda,\varphi}}$ is the least constant, $c$, satisfying the condition in (3).
\end{proposition}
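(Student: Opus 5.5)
I will prove the cycle $(1)\Rightarrow(2)\Rightarrow(3)\Rightarrow(1)$, and keep track of the constant along the way to get the norm statement.

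\emph{(1) implies (2).} Assume (1), so $W=W_{\lambda,\varphi}:\cH_2\to\cH_1$ is a well-defined linear map; boundedness will follow from the closed graph theorem. Let $f_n\to f$ in $\cH_2$ and $Wf_n\to g$ in $\cH_1$. Norm convergence implies pointwise convergence, as noted in the preliminaries. Hence for every $x\in X_1$,
$$g(x)=\lim_n \lambda(x)f_n(\varphi(x))=\lambda(x)f(\varphi(x))=(Wf)(x).$$
So $g=Wf$, the graph is closed, and $W$ is bounded.

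\emph{(2) implies (3).} First compute the adjoint on kernel functions. For $f\in\cH_2$ and $y\in X_1$,
$$\inp{f,W^*k^1_y}=\inp{Wf,k^1_y}=\lambda(y)f(\varphi(y))=\inp{f,\ol{\lambda(y)}k^2_{\varphi(y)}},$$
so $W^*k^1_y=\ol{\lambda(y)}k^2_{\varphi(y)}$. Now take $c=\norm{W}$, finitely many points $x_1,\dots,x_m\in X_1$ and scalars $\alpha_i$, and put $h=\sum_j\alpha_jk^1_{x_j}$. Expanding $c^2\norm{h}^2-\norm{W^*h}^2\ge 0$ gives
$$\sum_{i,j}\ol{\alpha_i}\alpha_j\Big(c^2K_1(x_i,x_j)-\lambda(x_i)\ol{\lambda(x_j)}K_2(\varphi(x_i),\varphi(x_j))\Big)\ge0,$$
which is exactly the positivity required in (3). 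It only remains to check that the index placement matches the paper's convention $K(x,y)=\inp{k_y,k_x}$, i.e. that $\inp{k^2_{\varphi(x_j)},k^2_{\varphi(x_i)}}=K_2(\varphi(x_i),\varphi(x_j))$. Since $\norm{W^*}=\norm{W}$, this also shows the least admissible constant in (3) is at most $\norm{W}$.

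\emph{(3) implies (1).} This is the main step. Assume (3) holds with constant $c$ and define $T$ on $\spa\set{k^1_y: y\in X_1}$ by
$$T\Big(\sum_j\alpha_jk^1_{y_j}\Big)=\sum_j\alpha_j\ol{\lambda(y_j)}\,k^2_{\varphi(y_j)}.$$
The same quadratic-form computation, read backwards, gives $\norm{Th}\le c\norm{h}$. This bound does the essential work: if $h=0$ then $Th=0$, so $T$ is well defined even when the kernels are linearly dependent. By density of the span of kernels, $T$ extends to a bounded operator $\cH_1\to\cH_2$ with $\norm{T}\le c$. Then for $f\in\cH_2$ and $x\in X_1$,
$$(T^*f)(x)=\inp{T^*f,k^1_x}=\inp{f,\ol{\lambda(x)}k^2_{\varphi(x)}}=\lambda(x)f(\varphi(x)).$$
Thus $\lambda\cdot f\circ\varphi=T^*f\in\cH_1$, which is (1), and moreover $W=T^*$ with $\norm{W}\le c$.

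\emph{The norm statement.} The last step shows $\norm{W}\le c$ for every admissible $c$ in (3), and the second step shows that $c=\norm{W}$ is itself admissible. Hence $\norm{W}$ is the least constant satisfying (3).
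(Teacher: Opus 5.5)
Your proof is correct and complete. The closed graph argument gives (1)$\Rightarrow$(2). For (2)$\Rightarrow$(3), the adjoint identity $W_{\lambda,\varphi}^\ast k^1_y=\ol{\lambda(y)}k^2_{\varphi(y)}$ (the same computation as in Lemma~\ref{lem : formw}) turns $\norm{W^\ast h}\le c\norm{h}$ into positivity of the kernel in (3). For (3)$\Rightarrow$(1), defining $T$ on $\spa\set{k^1_y:y\in X_1}$ and recovering $W=T^\ast$ works, with $\norm{W}\le c$, and combined with the previous step this gives the norm statement. The index check you flag is immediate from $K_2(x,y)=\inp{k^2_y,k^2_x}$ with $x=\varphi(x_i)$, $y=\varphi(x_j)$.

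The paper gives no argument of its own here and only cites \cite[Theorems 3.1 and 3.3]{kumari2025composition}. What you wrote is the standard self-contained proof behind that citation: the usual multiplier argument from \cite{Paulson}, with $k_y\mapsto\ol{\lambda(y)}k^2_{\varphi(y)}$ in place of $k_y\mapsto\ol{\phi(y)}k_y$.
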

\begin{proof}
    To keep this paper concise, we refer the reader to \cite[Theorems 3.1 and 3.3]{kumari2025composition}.
\end{proof}
From now on, to simplify writing, we always assume that $W_{\lambda,\varphi}:\cH_2\to \cH_1$ is a bounded weighted composition operator.
The following lemma demonstrates the relation between a weighted composition operator and its adjoint.
\begin{lemma}\label{lem : formw}
    Assume that $\cH_1$ and $\cH_2$ are $2$-interpolating and $T:\cH_2\to\cH_1$ is a bounded linear operator. 
    The following are equivalent:
    \begin{enumerate}
        \item $T$ is a weighted composition operator.
        \item The set $\set{k^1_x:x\in X_1}$ is sent by $T^\ast$ to a subset of $\set{\alpha k^2_y:\alpha\in \C, y\in X_2}$.
    \end{enumerate}  
    In these cases, $T= W_{\lambda,\varphi}$ where $\lambda,\varphi$ and $T$ are related by $W_{\lambda,\varphi}^\ast (k_x^1)=\ol{\lambda(x)}k^2_{\varphi(x)}$.
\end{lemma}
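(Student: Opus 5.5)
The plan is to prove both implications by pairing the operators against kernel functions.

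For $(1)\Rightarrow(2)$, suppose $T=W_{\lambda,\varphi}$. Fix $x\in X_1$ and let $f\in\cH_2$ be arbitrary. Then
$$\inp{f,T^\ast k^1_x}=\inp{Tf,k^1_x}=\lambda(x)f(\varphi(x))=\inp{f,\ol{\lambda(x)}k^2_{\varphi(x)}}.$$
Since $f$ is arbitrary, $T^\ast k^1_x=\ol{\lambda(x)}k^2_{\varphi(x)}$, which is an element of $\set{\alpha k^2_y}$. This computation also gives the stated relation between $\lambda$, $\varphi$ and $T$.

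For $(2)\Rightarrow(1)$, we must build the symbols $\lambda$ and $\varphi$ and check they are well defined. For each $x\in X_1$, hypothesis (2) gives some $\alpha_x\in\C$ and $y_x\in X_2$ with $T^\ast k^1_x=\alpha_x k^2_{y_x}$.
\begin{itemize}
\item If $T^\ast k^1_x\neq0$, the pair is unique. Suppose $\alpha k^2_y=\beta k^2_{y'}$ with both sides nonzero and $y\ne y'$. Then $k^2_y,k^2_{y'}$ would be linearly dependent, contradicting $2$-interpolation of $\cH_2$. So $y=y'$, and then $\alpha=\beta$ because $k^2_y\neq0$. (The vector $k^2_y$ is nonzero: by $2$-interpolation, or $1$-interpolation, some $f\in\cH_2$ has $f(y)=1$.)
\item If $T^\ast k^1_x=0$, set $\alpha_x=0$ and choose $y_x\in X_2$ arbitrarily.
\end{itemize}
Now define $\lambda(x)=\ol{\alpha_x}$ and $\varphi(x)=y_x$. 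Then for every $f\in\cH_2$,
$$(Tf)(x)=\inp{Tf,k^1_x}=\inp{f,\alpha_xk^2_{y_x}}=\ol{\alpha_x}f(y_x)=\lambda(x)f(\varphi(x)).$$
Hence $T=W_{\lambda,\varphi}$, and $T^\ast k^1_x=\ol{\lambda(x)}k^2_{\varphi(x)}$ by construction.

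The only delicate point is well-definedness: ensuring the representation $\alpha k^2_y$ is unique, and handling the zero case, where $\varphi$ is not determined. For the zero case, note that $\lambda(x)=0$ there, so $\lambda(x)f(\varphi(x))=0$ whatever value of $\varphi(x)$ is chosen. Thus any choice yields the same operator, and the relation $W_{\lambda,\varphi}^\ast(k^1_x)=\ol{\lambda(x)}k^2_{\varphi(x)}$ still holds. The $2$-interpolating hypothesis on $\cH_1$ is not used beyond ensuring the $k^1_x$ are well behaved. I would remark that the symbols are unique only on $\set{x:\lambda(x)\ne0}$.
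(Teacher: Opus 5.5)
Your proposal is correct and matches the paper's proof: both directions pair against kernel functions, and for $(2)\Rightarrow(1)$ the paper also defines $\lambda,\varphi$ through $T^\ast(k^1_x)=\ol{\lambda(x)}k^2_{\varphi(x)}$, uses $2$-interpolation for well-definedness, and picks $\varphi(x)$ arbitrarily when $T^\ast(k^1_x)=0$. Your extra remark is also correct: any choice of representation gives the same operator, so $2$-interpolation is only needed to make the symbols unique where $\lambda\neq 0$.
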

\begin{proof}
    $(1)\Rightarrow(2):$ If $T= W_{\lambda,\varphi}$ is a weighted composition operator, then for any $f\in \cH_2$ and $x\in X_1$ we have 
    \begin{align*}
    \inp{f,W_{\lambda,\varphi}^\ast (k_x^1)}_{\cH_2}
    & =\inp{W_{\lambda,\varphi}(f), k_x^1}_{\cH_1}
    =\inp{\lambda f\circ\varphi,k_x^1}_{\cH_1} \\
    & =\lambda(x)f(\varphi(x))
    =\inp{f,\ol{\lambda(x)}k^2_{\varphi(x)}}_{\cH_2}.    
    \end{align*}
    Thus, $W_{\lambda,\varphi}^\ast (k_x^1)=\ol{\lambda(x)}k^2_{\varphi(x)}$ and $\set{k^1_x:x\in X_1}$ is sent to a subset of $\set{\alpha k^2_y:\alpha\in \C, y\in X_2}$.

    $(2)\Rightarrow(1):$ Conversely, if $\set{k^1_x:x\in X_1}$ is sent by $T^\ast$ to a subset of $\set{\alpha k^2_y:\alpha\in \C, y\in X_2}$, let $\lambda$ and $\varphi$ be define by, $T^\ast(k^1_x)=\ol{\lambda(x)}k^2_{\varphi(x)}$ for every $x\in X_1$.
    Those maps are well-defined since $\cH_i$ are  $2$-interpolating and we can define arbitrary $\varphi(x)$ when $T^\ast(k^1_x)=0$. 
    Then
    $$T(f)(x)=\inp{T(f),k^1_x}_{\cH_1}=\inp{f,T^\ast(k^1_x)}_{\cH_2}=\inp{f,\ol{\lambda(x)}k^2_{\varphi(x)}}_{\cH_2}=\lambda(x)f(\varphi(x))$$
    so, $T=W_{\lambda,\varphi}$.
\end{proof}
From this lemma, for $W_{\lambda,\varphi}:\cH_2\to \cH_1$ we can infer that 
$$\av{\lambda(x)}^2K_2(\varphi(x),\varphi(x))=\norm{W^\ast_{\lambda,\varphi}(k^1_x)}_{\cH_2}^2\le \norm{W^\ast_{\lambda,\varphi}}^2\norm{k_x^1}_{\cH_1}^2=\norm{W^\ast_{\lambda,\varphi}}^2 K_1(x,x)$$
for all $x\in X_1$.
In particular, when $W_{\lambda,\varphi}:\cH_2\to\cH_1$ is the multiplier operator $M_\lambda$, i.e. $\cH_1$ and $\cH_2$ define on $X$ and $\varphi=\on{id}_X$, we obtain that $\av{\lambda(x)}\le \norm{M_\lambda}$ for all $x\in X$. 
Hence, if $\lambda$ is holomorphic function we get that $\lambda\in H^\infty(X)$.
Notice that $C_\varphi$ is another special case of a weighted composition operator; here $\lambda\equiv\textbf{1}$.
The lemma also yields a known description of composition operators in terms of the property of maximal domains.
\begin{corollary}
    Assume that $\cH_1$ and $\cH_2$ are $2$-interpolating and $T:\cH_2\to\cH_1$ is a bounded linear operator.
    Suppose also that $X_2$ is a maximal domain for $\cH_2$ and $T^{\ast}(k^1_x)\neq 0$ for all $x\in X_1$. 
    The following are equivalent:
    \begin{enumerate}
        \item $T$ is a composition operator.
        \item $T(fg)=T(f)T(g)$ whenever $f,g\in\cH_2$ such that $fg\in\cH_2$.
    \end{enumerate}
\end{corollary}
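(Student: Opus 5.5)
The plan is to reduce both directions to Lemma \ref{lem : formw} by reading the condition $T(fg)=T(f)T(g)$ through the functionals $f\mapsto T(f)(x)=\inp{f,T^\ast(k^1_x)}_{\cH_2}$, $x\in X_1$. Multiplicativity of $T$ is a pointwise statement, so it holds exactly when each of these functionals is multiplicative on $\cH_2$. Maximality of $X_2$ then lets us identify each such functional with a point evaluation on $X_2$.

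For $(1)\Rightarrow(2)$: if $T=C_\varphi$ and $f,g,fg\in\cH_2$, then $(fg)\circ\varphi=(f\circ\varphi)(g\circ\varphi)$ pointwise on $X_1$. Hence $T(fg)=T(f)T(g)$ holds trivially; this direction needs neither maximality nor the hypothesis $T^\ast(k^1_x)\neq0$.

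For $(2)\Rightarrow(1)$, fix $x\in X_1$ and set $\rho_x(f)=T(f)(x)=\inp{f,T^\ast(k^1_x)}_{\cH_2}$.
\begin{enumerate}
\item $\rho_x$ is a bounded linear functional on $\cH_2$, being the composition of the bounded map $T$ with evaluation at $x$.
\item $\rho_x$ is nonzero precisely because $T^\ast(k^1_x)\neq 0$. This is where that hypothesis is used.
\item By (2), $\rho_x(fg)=\rho_x(f)\rho_x(g)$ whenever $f,g,fg\in\cH_2$, so $\rho_x$ is a multiplicative functional.
\item Since $X_2$ is a maximal domain, there is $y\in X_2$ with $\rho_x(f)=f(y)=\inp{f,k^2_y}_{\cH_2}$ for all $f$. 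Set $\varphi(x)=y$.
\item Comparing the two representations of $\rho_x$ gives $T^\ast(k^1_x)=k^2_{\varphi(x)}$. The point $y$ is unique because $\cH_2$ separates points, equivalently is $2$-interpolating.
\item Lemma \ref{lem : formw} then gives $T=W_{\lambda,\varphi}$ with $\ol{\lambda(x)}=1$ for all $x$. So $\lambda\equiv\mathbf 1$ and $T=C_\varphi$.
\end{enumerate}

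The main point needing care is the bookkeeping in step 5: justifying that $T^\ast(k^1_x)$ equals $k^2_{\varphi(x)}$ exactly, not just a scalar multiple of it. This follows by the Riesz representation, since both vectors represent the same functional $\rho_x$, so no scalar ambiguity remains. Apart from this, the argument is a direct unwinding of the definitions together with Lemma \ref{lem : formw}.
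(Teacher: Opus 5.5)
Your proof is correct and follows essentially the same route as the paper. Both arguments show that $f\mapsto\inp{f,T^\ast(k^1_x)}_{\cH_2}$ is a multiplicative functional, nonzero by the hypothesis $T^\ast(k^1_x)\neq 0$; then use maximality of $X_2$ to identify it with evaluation at some $y\in X_2$, and conclude via Lemma \ref{lem : formw} with $\lambda\equiv\textbf{1}$. Your explicit Riesz-uniqueness step, giving $T^\ast(k^1_x)=k^2_{\varphi(x)}$ exactly, is left implicit in the paper and you handle it correctly.
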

\begin{proof}
    Clearly $(1)\Rightarrow(2)$.
    
    For $(2)\Rightarrow (1)$ we show that there is a map $\varphi:X_1\to X_2$ such that $T=C_\varphi$. Fix $x\in X_1$. Since $X_2$ is a maximal domain for $\cH_2$ by Lemma \ref{lem : formw}, it is sufficient to show that $k=T^{\ast}(k^1_x)$ induces a multiplicative functional on $\cH_2$.
    For $f,g,fg\in\cH_2$ we have
    \begin{align*}
        \inp{fg,k}_{\cH_2} &=\inp{T(fg),k^1_x}_{\cH_1}
        =\inp{T(f)T(g),k^1_x}_{\cH_1} \\
        &= \inp{T(f),k^1_x}_{\cH_1}\inp{T(g),k^1_x}_{\cH_1} \\
        & =\inp{f,k}_{\cH_2}\inp{g,k}_{\cH_2}.
    \end{align*}
    Since $k\neq 0$, it induces a nonzero functional on $\cH_2$ and, in particular, a multiplicative functional on $\cH_2$, which completes the proof.
\end{proof}
\begin{remark}
    Where $\cH_2$ is not $1$-interpolating there is $y\in X_2$ such that $f(y)=0$ for all $f\in\cH_2$. Hence, for $\varphi\equiv y$, $C_\varphi:\cH_2\to\cH_1$ is the zero map.
\end{remark}
\medskip
\begin{definition}
    \emph{An RKHS isomorphism} from $\cH_1$ to $\cH_2$ is a bijective bounded linear map $T : \cH_1 \to \cH_2$ defined by $T(k_x^1)=\overline{\lambda(x)}k_{\varphi(x)}^2$ for all $x\in X_1$, where $\lambda: X_1\to\C$ is a non-vanishing function and $\varphi: X_1\to X_2$ is a bijection.
\end{definition}
As we already mentioned, by Lemma \ref{lem : formw}, the adjoint map of an RKHS isomorphism is a weighted composition operator.
We also know that an RKHS isomorphism induces an isomorphism between the multiplier algebras.
The following proposition relates to \cite[Proposition 4.4]{OFEK}.   
\begin{proposition}\label{prop : isoalg}
    Assume that $\cH_1$ and $\cH_2$ are $2$-interpolating.
    If $W_{\lambda,\varphi}:\cH_2\to\cH_1$ is invertible such that $\lambda:X_1\to \C$ never vanishes and $\varphi:X_1\to X_2$ is a bijection, then $C_\varphi:\Mult(\cH_2)\to\Mult(\cH_1)$ is a norm continues isomorphism.
\end{proposition}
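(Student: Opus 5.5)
The approach is to conjugate multiplication operators by $W=W_{\lambda,\varphi}$. Fix $\psi\in\Mult(\cH_2)$ and consider the bounded operator $S=W M_\psi W^{-1}:\cH_1\to\cH_1$. I will show that $S=M_{\psi\circ\varphi}$. This gives $\psi\circ\varphi\in\Mult(\cH_1)$ together with the bound $\norm{M_{\psi\circ\varphi}}\le\norm{W}\norm{W^{-1}}\norm{M_\psi}$.

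The cleanest way to see this is through kernels rather than trying to describe $W^{-1}$ as a weighted composition operator directly. By Lemma \ref{lem : formw}, $W^\ast k^1_x=\ol{\lambda(x)}k^2_{\varphi(x)}$. Recall the standard fact that $M_\psi^\ast k^2_y=\ol{\psi(y)}k^2_y$, which follows from the same computation as in Lemma \ref{lem : formw} with $\varphi=\on{id}$. Since $W$ is invertible, so is $W^\ast$, and $(W^{-1})^\ast=(W^\ast)^{-1}$. Because $\lambda(x)\neq0$, this gives $(W^{-1})^\ast k^2_{\varphi(x)}=\ol{\lambda(x)}^{-1}k^1_x$. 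Therefore
$$S^\ast k^1_x=(W^{-1})^\ast M_\psi^\ast W^\ast k^1_x=(W^{-1})^\ast\,\ol{\lambda(x)}\,\ol{\psi(\varphi(x))}k^2_{\varphi(x)}=\ol{\psi(\varphi(x))}\,k^1_x .$$
Evaluating, $(Sf)(x)=\inp{f,S^\ast k^1_x}=\psi(\varphi(x))f(x)$, so $S=M_{\psi\circ\varphi}$ as claimed. Consequently $C_\varphi:\Mult(\cH_2)\to\Mult(\cH_1)$ is well defined, and it is clearly linear and multiplicative. Continuity holds in the multiplier norm, with constant $\norm{W}\norm{W^{-1}}$. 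Injectivity holds because $\varphi$ is surjective.

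For surjectivity I run the argument in reverse. Given $\eta\in\Mult(\cH_1)$, set $R=W^{-1}M_\eta W$ on $\cH_2$. Since $\varphi$ is a bijection, every $y\in X_2$ has the form $y=\varphi(x)$. From the identities above, $R^\ast k^2_{\varphi(x)}=W^\ast M_\eta^\ast(W^{-1})^\ast k^2_{\varphi(x)}=\ol{\eta(x)}k^2_{\varphi(x)}$. Hence $R=M_{\eta\circ\varphi^{-1}}$, and $C_\varphi(\eta\circ\varphi^{-1})=\eta$. The inverse map is bounded by the same constant, so $C_\varphi$ is a norm-continuous algebra isomorphism.

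The main subtlety, rather than a real obstacle, is justifying $(W^{-1})^\ast k^2_{\varphi(x)}=\ol{\lambda(x)}^{-1}k^1_x$. This is exactly where the hypotheses that $\lambda$ is non-vanishing and $\varphi$ is bijective are used. The $2$-interpolating assumption serves to identify multipliers with multiplication operators and to ensure that $\lambda$ and $\varphi$ are uniquely determined by $W$, via Lemma \ref{lem : formw}.
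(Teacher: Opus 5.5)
Your proof is correct and is essentially the paper's argument: you conjugate by $W_{\lambda,\varphi}$, and for surjectivity you compute the adjoint on the kernels $k^2_{\varphi(x)}$, using $(W_{\lambda,\varphi}^\ast)^{-1}k^2_{\varphi(x)}=\ol{\lambda(x)}^{-1}k^1_x$ together with the bijectivity of $\varphi$. The only differences are cosmetic. For the forward direction the paper checks $W_{\lambda,\varphi}M_\psi W_{\lambda,\varphi}^{-1}=M_{\psi\circ\varphi}$ by the direct pointwise identity $W_{\lambda,\varphi}(\psi F)=\lambda(\psi\circ\varphi)(F\circ\varphi)$ rather than through kernels, and you additionally make the continuity constant $\norm{W_{\lambda,\varphi}}\norm{W_{\lambda,\varphi}^{-1}}$ explicit in both directions.
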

\begin{remark}
A few words before the proof
\begin{enumerate}
    \item One can show that we can ask for the spaces to be just separating points.
    \item Recall that the assumption that $\cH_i$ is separating points ensures a one-to-one correspondence between $\Mult(\cH_i)$ and the set of multiplication operators.
    \item We show soon, Lemma \ref{lem : inj}, that when $W_{\lambda,\varphi}$ is invertible,  automatically $\lambda$ never vanishes and $\varphi$ is injective.
\end{enumerate}
\end{remark}
\begin{proof}
    Note that if $\Phi\in\Mult(\cH_2)$ then $W_{\lambda,\varphi}M_\Phi W_{\lambda,\varphi}^{-1}$ is the multiplication operator by $\Phi\circ\varphi$.
    Indeed, if $W_{\lambda,\varphi}(F)=f\in \cH_1$ then
    $$W_{\lambda,\varphi}M_\Phi W_{\lambda,\varphi}^{-1}(f)=W_{\lambda,\varphi}(\Phi F)=\lambda \Phi\circ\varphi F\circ\varphi=\Phi\circ\varphi f.$$
    Hence, the map is well defined.
    It is clear that the map is injective; it remains to show that it is onto.
    To this end, for a nonzero multiplier $\phi\in \Mult(\cH_1)$ we define a map $T:\cH_2\to \cH_2$ by $T=W_{\lambda,\varphi}^{-1}M_\phi W_{\lambda,\varphi}$. 
    We aim to show that $T$ is a multiplication operator by $\Phi=\phi \circ\varphi^{-1}$.
    For $\varphi(x)\in X_2$ we have
     \begin{align*}
        \ol{\lambda(x)}T^\ast(k^2_{\varphi(x)}) 
        & =W_{\lambda,\varphi}^\ast M_\phi ^\ast \pare{W_{\lambda,\varphi}^\ast}^{-1}(\ol{\lambda(x)}k^2_{\varphi(x)})
        =W_{\lambda,\varphi}^\ast M_\phi ^\ast\pare{k^1_x} \\
        & =W_{\lambda,\varphi}^\ast\pare{\ol{\phi(x)}k^1_x} =\ol{\lambda(x)}\ol{\phi(x)}k^2_{\varphi(x)}
        =\ol{\lambda(x)}\pare{\ol{\Phi(\varphi(x))}k^2_{\varphi(x)}} 
    \end{align*}
    since $\lambda(x)\neq 0$ and $\varphi$ in bijection, by Lemma \ref{lem : formw} $T=M_\Phi$ and maps onto $M_\phi$ which completes the proof.
\end{proof}

Suppose $W_{\lambda,\varphi}:\cH_2\to \cH_1$ is an invertible operator.
The first problem we will address is: 
\begin{quest}
    Under which conditions does $W_{\lambda, \varphi}$ become the adjoint of an RKHS isomorphism? 
    Namely, under what conditions does $\lambda$ not vanish and $\varphi$ is a bijection?
\end{quest}

In other words, we seek to understand when $W_{\lambda,\varphi}^{-1}$ is also a weighted composition operator. 
We use the terminology $W_{\lambda,\varphi}$ is invertible to emphasize that $W_{\lambda,\varphi}$ need not be the adjoint of an RKHS isomorphism. 

We want to gain a better understanding of the symbols $\lambda$ and $\varphi$ for which the weighted composition operator $W_{\lambda,\varphi}$ is invertible.
\begin{lemma}\label{lem : inj}
     Assume that $\cH_1$ and $\cH_2$ are $2$-interpolating.    
     If $W_{\lambda,\varphi}^\ast:\cH_1\to\cH_2$ is injective, then: 
     \begin{enumerate}
         \item $\lambda:X_1\to \C$ does not vanish. 
         \item $\varphi: X_1\to X_2$ is an injective map. 
     \end{enumerate} 
     Moreover, when $\cH_1$ consists of functions that are holomorphic on the interior of $X_1$ and continuous on $X_1\subseteq \C^{d_1}$, and $\cH_2$ contains the constant function and the coordinate functions on $X_2\subseteq\C^{d_2}$, then $\lambda$ and $\varphi$ are holomorphic on the interior of $X_1$ and continuous on $X_1$. 
\end{lemma}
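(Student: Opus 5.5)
The proof rests on the identity from Lemma \ref{lem : formw}: $W_{\lambda,\varphi}^\ast(k^1_x)=\ol{\lambda(x)}k^2_{\varphi(x)}$ for every $x\in X_1$.

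For (1), suppose $\lambda(x_0)=0$ for some $x_0\in X_1$. Then $W_{\lambda,\varphi}^\ast(k^1_{x_0})=0$. Since $\cH_1$ is $2$-interpolating, and in particular $1$-interpolating, the vector $k^1_{x_0}$ is nonzero. This contradicts injectivity of $W_{\lambda,\varphi}^\ast$.

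For (2), suppose $x\neq y$ in $X_1$ satisfy $\varphi(x)=\varphi(y)=:w$. Set $v=\ol{\lambda(y)}k^1_x-\ol{\lambda(x)}k^1_y$. Then
$$W_{\lambda,\varphi}^\ast(v)=\ol{\lambda(y)}\,\ol{\lambda(x)}k^2_w-\ol{\lambda(x)}\,\ol{\lambda(y)}k^2_w=0.$$
By (1) both coefficients of $v$ are nonzero. Because $\cH_1$ is $2$-interpolating, $k^1_x$ and $k^1_y$ are linearly independent, so $v\neq0$. This again contradicts injectivity, hence $\varphi$ is injective.

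For the regularity statement, apply $W_{\lambda,\varphi}$ to specific functions in $\cH_2$.
\begin{itemize}
\item Since $\textbf{1}\in\cH_2$, we get $W_{\lambda,\varphi}(\textbf{1})=\lambda\in\cH_1$. So $\lambda$ is holomorphic on $\on{int}(X_1)$ and continuous on $X_1$.
\item For each coordinate function $z_j$ on $X_2$, we get $W_{\lambda,\varphi}(z_j)=\lambda\varphi_j\in\cH_1$, which has the same regularity.
\item By (1), $\lambda$ never vanishes, so $\varphi_j=(\lambda\varphi_j)/\lambda$ is a quotient of holomorphic (respectively continuous) functions with nonvanishing denominator.
\end{itemize}
Hence each $\varphi_j$ is holomorphic on $\on{int}(X_1)$ and continuous on $X_1$, and therefore so is $\varphi=(\varphi_1,\dots,\varphi_{d_2})$.

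No step here is a serious obstacle. The only care needed is to establish (1) before (2), since both the nonvanishing of $v$ and the division by $\lambda$ rely on it.
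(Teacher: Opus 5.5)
Your proof is correct and follows essentially the same route as the paper: $\lambda$ is nonvanishing because $W_{\lambda,\varphi}^\ast(k^1_x)=\ol{\lambda(x)}k^2_{\varphi(x)}$ must be nonzero, $\varphi$ is injective by the linear independence of $k^1_x,k^1_y$, and regularity follows by applying $W_{\lambda,\varphi}$ to $\textbf{1}$ and the coordinate functions and dividing by the nonvanishing $\lambda$. The only cosmetic difference is that the paper tests against the linear functions $\inp{\cdot,\zeta}_{d_2}$ rather than the individual coordinates $z_j$, which amounts to the same thing.
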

\begin{proof}
    First, by the $2$-interpolating property, we get $k^i_x\neq 0$ for any $x\in X_i$,  $i=1,2$.
    Thus,
    $$W_{\lambda,\varphi}^\ast(k_x^1)=\ol{\lambda(x)}k_{\varphi(x)}^2\neq 0$$
    for any $x\in X_1$. This shows that $\lambda$ does not vanish. 
    
    Since $k_x^1$ and $k_y^1$ are linearly independent for all distinct $x,y\in X_1$, $\varphi$ is injective. Indeed, if $\varphi(x)=\varphi(y)$ then    $$\ol{\lambda(y)}W_{\lambda,\varphi}^\ast(k_x^1)=\ol{\lambda(y)}\ol{\lambda(x)}k_{\varphi(x)}^2=\ol{\lambda(x)}W_{\lambda,\varphi}^\ast(k_y^1).$$
    Hence, $\ol{\lambda(y)}k_x^1=\ol{\lambda(x)}k_y^1$, so $x=y$ and $\varphi$ is injective.
    
    Notice that $W_{\lambda,\varphi}(\textbf{1})=\lambda\in\cH_1$, so $\lambda$ is holomorphic on the interior of $X_1$ and continuous on $X_1$. 
    Let $\zeta \in \C^{d_2}$ be with finite support. The assumption that $\cH_2$ contains the coordinate functions implies that $\inp{\cdot,\zeta}_{d_2} \in \cH_2$. 
    Hence,
    $$ W_{\lambda,\varphi}\pare{\inp{\cdot,\zeta}_{d_2}}=\lambda\inp{\varphi(\cdot),\zeta}_{d_2}\in\cH_1. $$
    Since $\lambda$ does not vanish, $\varphi$ is holomorphic in the interior of $X_1$ and continuous on $X_1$.
\end{proof}

Although most work on weighted composition operators does not require considering spaces with their maximal domains, we understand that this is crucial for this problem.
In Proposition \ref{Prop : exsit} and Example \ref{Ex : 1}, we saw examples of unitary composition operators with composition symbols that are not necessarily onto.
We illustrate this issue with another example:
\begin{example}\label{ex:Orr}
    This example is taken from \cite[Section 5]{McCarthy_2017}.
    Let $\{b_k\}^\infty_{k=1}$ be a sequence of positive numbers such that $\sum_{k=1}^\infty b_k^2=1$.
    Set $\bH_0=\set{z\in\C :\Re{z}>0}$, $\bB_\infty=\set{z\in\ell^2:\norm{z}<1}$ and $\psi:\bH_0\to\bB_d$ to be defined by $\psi(s)=(b_1p_1^{-s},b_2p_2^{-s},b_3p_3^{-s},\dots)$, where $p_k$ denotes the $k$-th prime.
    Let $H^2_\infty$ denote the Hilbert function space on $\bB_\infty$ with $\mathfrak{a}(z,w)=\frac{1}{1-\inp{z,w}_{\ell^2}}$ as its reproducing kernel (the infinite-dimensional Drury--Arveson space).
        
    We define a kernel on $\bH_0$ by $K(s,u)=\mathfrak{a}(\psi(s),\psi(u))$ and we denote by $\cH$ the Hilbert function space with $K$ as its kernel.
    McCarthy and Shailt have showed, see \cite[Theorem 31]{McCarthy_2017}, that the map $T:k_s\mapsto\mathfrak{a}_{\psi(s)}$ extends to a unitary map from $\cH$ onto $H^2_\infty$.
    The adjoint of $T$ is a composition operator given by $T^\ast f = f \circ \psi$.
    Assume for the sake of reaching a contradiction that $\lambda:\bB_\infty\to\C$ and $\varphi:\bB_\infty\to\bH_0$ are such that $W_{\lambda,\varphi} T^\ast=I_{H^2_\infty}$.
    Since $\textbf{1}\in H^2_\infty$ we must have $\lambda=\textbf{1}$ and $W_{\lambda,\varphi}=C_\varphi$.
    Denote by $Z_i$ the $i$-th coordinate map on $\bB_\infty$, then $C_\varphi T^\ast(Z_i)=Z_i$ and
    $z_i=\psi(\varphi(z))_i$ for all $z\in\bB_\infty$ and $i$, i.e. $\psi\circ \varphi=\on{id}_{\bB_\infty}$.
    However, $\psi$ does not map $\bH_0$ onto $\bB_\infty$, so the inverse of $T^\ast$ is not a weighted composition operator when $\cH$ is defined only on $\bH_0$.
    
    The reason is that although $\psi(\bH_0)$ is a faithful subset of $\bB_\infty$, $\bH_0$ is not in a maximal domain of $\cH$.
    In other words, if $X=\widehat{\bH_0}$ is a maximal domain for $\cH$, there exists a map $\Psi:X\to \bB_\infty$ such that $\Psi|_{\bH_0}=\psi$ and $C_\Psi:H^2_\infty\to\cH$ is unitary \cite[Proposition 17]{McCarthy_2017}.

    Unlike Example \ref{Ex : 1} and other natural and basic examples of restricted spaces to uniqueness subset (such as the Hardy space restricted to $\set{\nicefrac{1}{k}}_{k=2}^{\infty}$), there are functions in $\cH$ which can not be extended any further then $\bH_0$ in the complex plain. 
    And yet, $\bH_0$ is not a maximal domain for $\cH$.
\end{example}
Henceforth, when dealing with invertible weighted composition operators, we shall always regard the spaces under consideration as equipped with one of their maximal domains. 
\begin{definition}
    We say that $\cH_1$ is \emph{weakly related to $\cH_2$} if there exists an invertible weighted composition operator $W_{\lambda,\varphi}:\cH_2\to\cH_1$.
    $\cH_1$ and $\cH_2$ are called \emph{strongly related} if $\cH_1$ is weakly related to $\cH_2$ and also $\cH_2$ is weakly related to $\cH_1$, i.e. there exist invertible maps $W_{\lambda,\varphi}:\cH_2\to\cH_1$ and $W_{\mu,\psi}:\cH_1\to\cH_2$, not necessarily that $W_{\lambda,\varphi}^{-1}=W_{\mu,\psi}$.
\end{definition}
The following proposition generalizes \cite[Theorem 1.6]{CowenCarlC} to our setting. 
It is the main tool we use in Section 6 to find sufficient conditions for strongly related spaces (Proposition \ref{cor : sr}) and even weakly related spaces (Proposition \ref{prop : new}) to be isomorphic via an RKHS isomorphism.
 The proof method will be used again in several subsequent arguments.

\begin{proposition}\label{prop : dim}
    Let $\cH_i$ be a $2$-interpolating Hilbert function space with maximal domain $X_i\subseteq \C^{d_i},d_i\in \N$, $i=1,2$.
    Assume that $\on{int}(X_i)$ is connected and dense in $X_i$, 
    and that every function in $\cH_i$ is holomorphic on $\on{int}(X_i)$ and continuous on $X_i$.
    Furthermore, assume that the polynomials form a dense subset of $\cH_1$, $\cH_2$ contains the constants and the coordinate functions, and $d_1\ge d_2$. 
    If $W_{\lambda,\varphi}:\cH_2\to\cH_1$ is invertible, then 
     \begin{enumerate}
         \item $d_1=d_2$,
         \item $\varphi: X_1\to X_2$ is a bijection which is holomorphic on the interior of $X_1$ and continuous on $X_1$,
         \item $W_{\lambda,\varphi}^{-1}=W_{\nicefrac{1}{\lambda\circ\varphi^{-1}},\varphi^{-1}}$. 
     \end{enumerate} 
\end{proposition}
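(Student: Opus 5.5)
By Lemma \ref{lem : inj}, applied to the injective operator $W_{\lambda,\varphi}^\ast$ (the adjoint of an invertible operator), $\lambda$ never vanishes, $\varphi$ is injective, and both are holomorphic on $\on{int}(X_1)$ and continuous on $X_1$. Since $\cH_2$ contains the constants and coordinates, $\varphi=(\varphi_1,\dots,\varphi_{d_2})$ with $\lambda\varphi_j\in\cH_1$.

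To show that $\varphi$ is onto, I would use maximality of $X_2$. Fix $y\in X_2$ and consider the functional $\rho(f)=(W_{\lambda,\varphi}^{-1}f)(y)$ on $\cH_1$, which is bounded and nonzero. Since the polynomials are dense in $\cH_1$, and $\cH_1$ sits on a maximal domain, I want to produce a point $x\in X_1$ with $\rho=c\,\mathrm{ev}_x$. The natural move is to twist by $\lambda$: set $\sigma(f)=\rho(\lambda f)/\rho(\lambda)$. Then $\rho(\lambda)=\mathbf{1}(y)=1$, and more generally $\rho(\lambda\,(g\circ\varphi))=g(y)$ for $g\in\cH_2$. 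The aim is to show $\sigma$ is a multiplicative functional on $\cH_1$, giving $\sigma=\mathrm{ev}_x$. Then I would deduce $\varphi(x)=y$ by testing on coordinates: $\sigma(\varphi_j)=\rho(\lambda\varphi_j)=y_j$ and $\sigma(\varphi_j)=\varphi_j(x)$.

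The main obstacle is that $f\mapsto\lambda f$ need not map $\cH_1$ into itself, so $\sigma$ is not obviously defined on all of $\cH_1$. I would therefore instead transport the problem. The space $\cK=\{g\circ\varphi:g\in\cH_2\}$ with the norm from $\cH_2$ is $C_\varphi\cH_2$, and $W_{\lambda,\varphi}=M_\lambda C_\varphi$. I would use holomorphy to compare multiplication: if $f,g,fg\in\cH_2$ then $\lambda(fg)\circ\varphi=\lambda^{-1}(\lambda f\circ\varphi)(\lambda g\circ\varphi)$, so the functional $\tau(F)=(W^{-1}_{\lambda,\varphi}F)(y)$ satisfies a twisted multiplicativity. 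The cleaner route I would actually try follows the Cowen--MacCluer argument. By density of polynomials in $\cH_1$, pick polynomials $p_n\to W_{\lambda,\varphi}^{-1\,\ast}$-related preimages; concretely, the coordinate functions $z_i\in\cH_1$ have preimages $g_i=W^{-1}z_i\in\cH_2$ with $\lambda\,g_i\circ\varphi=z_i$, and $\mathbf{1}=\lambda\,g_0\circ\varphi$. Then on $\on{int}(X_1)$ we get $z_i=(g_i/g_0)\circ\varphi$ wherever $g_0\circ\varphi\neq0$, which is everywhere since $\lambda g_0\circ\varphi=1$. Hence $\psi=(g_1/g_0,\dots,g_{d_1}/g_0)$ is holomorphic on the open set $U=\{g_0\ne0\}\supseteq\varphi(X_1)$ with $\psi\circ\varphi=\mathrm{id}_{X_1}$. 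This already gives $d_1\le d_2$ by rank of the Jacobians at an interior point, so $d_1=d_2$. Then $\varphi(\on{int}X_1)$ is open by invariance of domain, and $\varphi$ is a biholomorphism onto its image.

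For surjectivity, I would show that $\varphi(X_1)$ is closed in $X_2$ and open in $\on{int}(X_2)$, relative to the connected dense interior, using the evaluation functionals $k^2_y$. For $y$ in the closure with $y=\lim\varphi(x_n)$, the vectors $(W^\ast)^{-1}$ applied to $\lambda(x_n)^{-1}$-rescaled kernels are bounded, since $\|k^2_{\varphi(x_n)}\|$ is locally bounded. A weak limit then gives a kernel $k^1_x$ up to scalar, with $x\in X_1$ by maximality, via a multiplicative functional limit, and continuity yields $\varphi(x)=y$. This compactness/limit step is where I expect the real difficulty.

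Given the bijection, part (3) follows from Lemma \ref{lem : formw}. We have $(W^\ast)^{-1}k^2_{\varphi(x)}=\overline{\lambda(x)}^{-1}k^1_x$, so $W^{-1}$ has multiplier symbol $1/\lambda\circ\varphi^{-1}$ and composition symbol $\varphi^{-1}$.
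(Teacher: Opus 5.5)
\textbf{There is a genuine gap, and it is in surjectivity.} Your handling of the symbols via Lemma~\ref{lem : inj}, the conclusion $d_1=d_2$, and part (3) once bijectivity is known are fine.

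Your plan for surjectivity is to show that $\varphi(X_1)$ is closed in $X_2$ and open in $\operatorname{int}(X_2)$. You give no argument for openness, and the tools you list do not supply one. Invariance of domain only shows that $\varphi(\operatorname{int}(X_1))$ is open. A point $y=\varphi(x)\in\operatorname{int}(X_2)$ with $x\in X_1\setminus\operatorname{int}(X_1)$ need not be an interior point of $\varphi(X_1)$. For example, when $X_1$ is a closed ball, $\varphi(X_1)$ is a compact topological ball, hence automatically closed in $X_2$. Nothing in your argument excludes that it sits strictly inside $\operatorname{int}(X_2)$, and at points $y\notin\overline{\varphi(X_1)}$ your weak-limit construction gives no information.

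Even the closedness step is incomplete as written. For $h_n=\overline{\lambda(x_n)}^{-1}k^1_{x_n}\to h=(W_{\lambda,\varphi}^\ast)^{-1}k^2_y$ weakly, the identity $\langle fg,h_n\rangle\langle\mathbf{1},h_n\rangle=\langle f,h_n\rangle\langle g,h_n\rangle$ does pass to the limit. However, the scalar $\langle\mathbf{1},h\rangle=\lim_n 1/\lambda(x_n)$ may a priori be $0$, and then you get no multiplicative functional at all. Ruling this out needs the density of the polynomials: if the scalar vanished, then $\langle z^\alpha,h\rangle^2=0$ for all $\alpha$, so $h=0$, contradicting $W_{\lambda,\varphi}^\ast h=k^2_y$.

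The missing idea is analytic rather than topological: prove the twisted multiplicativity of $h=(W_{\lambda,\varphi}^\ast)^{-1}k^2_y$ directly for \emph{every} $y\in X_2$.
\begin{enumerate}
\item Let $K=W_{\lambda,\varphi}^{-1}\mathbf{1}$ (your $g_0$), and let $F,G,H\in\cH_2$ be the preimages of $f,g,fg\in\cH_1$. Then $(FG)\circ\varphi=(KH)\circ\varphi$ on $X_1$, i.e.\ $FG=KH$ on $\varphi(X_1)$.
\item Since $\varphi(\operatorname{int}(X_1))$ is a nonempty open subset of the connected set $\operatorname{int}(X_2)$, the identity theorem, together with density of the interior and continuity, gives $FG=KH$ on all of $X_2$.
\item Evaluating at $y$ yields $K(y)\langle fg,h\rangle=\langle f,h\rangle\langle g,h\rangle$ and $\langle\mathbf{1},h\rangle=K(y)$.
\item The polynomial argument above gives $K(y)\neq0$, and maximality of $X_1$ gives $h=\overline{K(y)}\,k^1_x$ for some $x\in X_1$.
\item Applying $2$-interpolation to $W_{\lambda,\varphi}^\ast k^1_x=\overline{\lambda(x)}k^2_{\varphi(x)}$ gives $\varphi(x)=y$.
\end{enumerate}
This is the paper's argument, and it makes both the open--closed analysis and the left inverse $\psi$ unnecessary.

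A minor point: your Jacobian argument for $d_1\le d_2$ needs $\psi$ to be differentiable at some $\varphi(x)$, i.e.\ $\varphi(x)\in\operatorname{int}(X_2)$, which you have not shown. It is simpler to use that an injective holomorphic map from an open subset of $\C^{d_1}$ into $\C^{d_2}$ forces $d_1\le d_2$.
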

\begin{proof} 
    By Lemma \ref{lem : inj}, $\varphi$ is injective and holomorphic on the interior of $X_1$, and by the assumption that $d_1\ge d_2$, we have $d_1=d_2$. 
    Moreover, by the open mapping theorem \cite[Theorem 15.1.6]{rudin1980function}, $\varphi\big\vert_{\on{int}(X_1)}$ is open.

    Now, for $f,g,fg\in \cH_1$, there are some $F,G, H,K\in\cH_2$ such that
    $$W_{\lambda,\varphi}(F)=f,W_{\lambda,\varphi}(G)=g,W_{\lambda,\varphi}(H)=fg,W_{\lambda,\varphi}(K)=\textbf{1}$$
    i.e. $f=\lambda F\circ\varphi$, $g=\lambda G\circ\varphi$ and $fg=\lambda H\circ\varphi$. 
    Also, $K=\frac{1}{\lambda\circ\varphi^{-1}}$ on $\varphi(X_1)$.
    For any $x\in X_1$ we have
    \begin{gather*}
        \lambda(x)F(\varphi(x))\lambda(x)G(\varphi(x))
        =\langle f,k_x^1\rangle_{\cH_1}\langle g,k_x^1\rangle_{\cH_1} 
        =\langle fg,k_x^1\rangle_{\cH_1}=\lambda(x)H(\varphi(x)).
    \end{gather*}
    Since $\lambda$ never vanishes, we obtain that $FG=KH$ on $\varphi(\on{int}(X_1))$.  Since $FG$ and $KH$ are holomorphic on $\on{int}(X_2)$ and $\varphi(\on{int}(X_1))\subseteq \on{int}(X_2)$ is open, by the identity theorem, see \cite[Theorem 1.2.7]{curry2025tasty}, we get that $FG=KH$ on $\on{int}(X_2)$. 
    As the interior of $X_2$ is dense in $X_2$ and $F, G, H, K$ are continuous on $X_2$, we get $FG=KH$ on $X_2$.
    
    Fix $y\in X_2$ and set $c=K(y)$.
    As $W_{\lambda,\varphi}^\ast$ is onto, there is $h\in\cH_1$ such that $W_{\lambda,\varphi}^\ast(h)=k_y^2$.
    We have
    \begin{align*}
        c\inp{fg,h}_{\cH_1}
        & =c\inp{W_{\lambda,\varphi}(H),h}_{\cH_1}
        =c\inp{H,W_{\lambda,\varphi}^\ast(h)}_{\cH_2}
        =c\inp{H,k_y^2}_{\cH_2} \\
        &=K(y)H(y)=F(y)G(y)
        =\inp{F,k_y^2}_{\cH_2} \inp{G,k_y^2}_{\cH_2} \\ 
        & =\inp{F,W_{\lambda,\varphi}^\ast(h)}_{\cH_2}
        \inp{G,W_{\lambda,\varphi}^\ast(h)}_{\cH_2} 
        =\inp{f,h}_{\cH_1}\inp{g,h}_{\cH_1}.
    \end{align*}        
    Moreover, we have
    $$\inp{\textbf{1},h}_{\cH_1}=\inp{W_{\lambda,\varphi}(K),h}_{\cH_1}=
    \inp{K,k^2_y}_{\cH_2}=K(y)=c.$$
    If $c=0$, then for any multi-index $\alpha=(\alpha_1,\dotsc,\alpha_d)$  we obtain
    $$0=c\inp{z^{2\alpha},h}_{\cH_1} =\inp{z^{\alpha},h}_{\cH_1}\inp{z^{\alpha},h}_{\cH_1}$$ 
    so $\inp{z^{\alpha},h}_{\cH_1}=0$ and we see that $\inp{p,h}_{\cH_1}=0$ for all polynomials $p$. 
    Since the polynomials are dense in $\cH_1$, we get that $h=0$, which is impossible. 
    Thus, $c\neq 0$ and put $k=\frac{1}{\ol{c}}h$, then for all $f,g\in\cH_1$ with $fg\in\cH_1$ we have 
    \begin{align*}
        \inp{fg,k}_{\cH_1}
        &=\inp{fg,\frac{1}{\ol{c}}h}_{\cH_1} =\frac{1}{c}\inp{fg,h}_{\cH_1} \\
        &=\frac{1}{c}\inp{f,h}_{\cH_1}\frac{1}{c}\inp{g,h}_{\cH_1}
        =\inp{f,k}_{\cH_1}\inp{g,k}_{\cH_1}.
    \end{align*}
    By the assumption that $X_1$ is a maximal domain for $\cH_1$, there is some $x\in X_1$ such that $k=k^1_x$ and $$\ol{\lambda(x)}k_{\varphi(x)}^2=W_{\lambda,\varphi}^\ast(k^1_x)=W_{\lambda,\varphi}^\ast(\frac{1}{\ol{c}}h)=\frac{1}{\ol{c}} k_y^2.$$
    Since $\cH_2$ is $2$-interpolating we obtain $\varphi(x)=y$, $K(\varphi(x))=\frac{1}{\lambda(x)}$ and in particular $\varphi$ is onto. 
    
    Finally, we see that 
    \begin{align*}
        W_{\lambda,\varphi}^{-1}(f)(y) = & \inp{W_{\lambda,\varphi}^{-1}(f),k_y^2}_{\cH_2} 
        =\inp{f,\pare{W_{\lambda,\varphi}^\ast}^{-1}(k_y^2)}_{\cH_1} \\ 
        = & \inp{f,\ol{K(y)}k_{\varphi^{-1}(y)}^1}_{\cH_1} 
        = K(y)f\circ\varphi ^{-1}(y) 
    \end{align*}
    for any $f\in\cH_1$ and $y\in X_2$. Thus, as $K=\frac{1}{\lambda\circ \varphi^{-1}}$, we obtain that $\pare{W_{\lambda,\varphi}}^{-1}=W_{\nicefrac{1}{\lambda\circ\varphi^{-1}},\varphi^{-1}}$.
\end{proof}
We have another result with this flavor.
\begin{proposition}\label{prop : comp}
    Let $\cH_i$ be a $2$-interpolating Hilbert function space with maximal domain $X_i$, $i=1,2$.
    If $C_\varphi:\cH_2\to\cH_1$ is invertible such that $\varphi(X_1)\subset X_2$ is faithful subset, then $\varphi$ is a bijection and $C_\varphi^{-1}=C_{\varphi^{-1}}$.
\end{proposition}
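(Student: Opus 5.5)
Since $C_\varphi$ is invertible, its adjoint $C_\varphi^\ast$ is injective. By Lemma \ref{lem : inj} (with $\lambda\equiv\textbf{1}$), $\varphi$ is injective and $C_\varphi^\ast(k^1_x)=k^2_{\varphi(x)}$. It remains to prove that $\varphi$ is onto $X_2$. The approach is the one used in Proposition \ref{prop : dim}: for each $y\in X_2$, pull the evaluation functional at $y$ back to $\cH_1$ through the invertible operator, show the result is multiplicative on $\cH_1$, and use the maximality of $X_1$ to get a point $x$ with $\varphi(x)=y$.

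Fix $y\in X_2$. Since $C_\varphi^\ast$ is onto, choose $h\in\cH_1$ with $C_\varphi^\ast(h)=k^2_y$, and set $\rho(f)=\inp{f,h}_{\cH_1}$. Then $\rho$ is bounded and nonzero, and for $F\in\cH_2$
$$\rho(C_\varphi F)=\inp{F,k^2_y}_{\cH_2}=F(y).$$
We need $\rho$ to be multiplicative. Let $f,g,fg\in\cH_1$ and write $f=F\circ\varphi$, $g=G\circ\varphi$, $fg=H\circ\varphi$ with $F,G,H\in\cH_2$. Then $(F\circ\varphi)(G\circ\varphi)=H\circ\varphi$ on $X_1$, so $F|_S\,G|_S=H|_S$ for $S=\varphi(X_1)$. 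This is where faithfulness is used: by condition (2) in the definition of a faithful subset, $H=FG$ on all of $X_2$. Hence
$$\rho(fg)=H(y)=F(y)G(y)=\rho(f)\rho(g).$$
No constant $c$ needs to be normalized here, unlike in Proposition \ref{prop : dim}, because the weight is $\textbf{1}$.

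Since $X_1$ is a maximal domain for $\cH_1$, there is $x\in X_1$ with $\rho=\on{ev}_x$, i.e.\ $h=k^1_x$. Then
$$k^2_y=C_\varphi^\ast(k^1_x)=k^2_{\varphi(x)},$$
and because $\cH_2$ is $2$-interpolating, the kernels at distinct points are linearly independent, so $y=\varphi(x)$. Thus $\varphi$ is a bijection. Finally, $(C_\varphi^\ast)^{-1}(k^2_y)=k^1_{\varphi^{-1}(y)}$, so
$$C_\varphi^{-1}(f)(y)=\inp{f,k^1_{\varphi^{-1}(y)}}_{\cH_1}=f(\varphi^{-1}(y)),$$
which gives $C_\varphi^{-1}=C_{\varphi^{-1}}$.

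The main obstacle is the multiplicativity step. It depends on reading faithfulness of $\varphi(X_1)$ as the statement that $F|_SG|_S=H|_S$ forces $H=FG$ on $X_2$. That is condition (2) of the equivalent conditions in the definition, which is available here since $X_2$ is a maximal domain.
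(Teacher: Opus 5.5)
Your proof is correct and follows the paper's own argument. You get injectivity from Lemma \ref{lem : inj}, use faithfulness of $\varphi(X_1)$ to upgrade $H=FG$ from $\varphi(X_1)$ to all of $X_2$, and then spell out the surjectivity and inverse-formula steps of Proposition \ref{prop : dim} that the paper omits. Your direct computation $\langle C_\varphi F,h\rangle_{\cH_1}=F(y)$ is exactly what the paper's remark ``noting that $K\equiv 1$'' abbreviates, so no normalizing constant is needed.
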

\begin{proof}
    By the second part of Lemma \ref{lem : inj}, we see that $\varphi$ is injective. 
    Now, if $f,g,fg\in \cH_1$ there are some $F,G,H\in\cH_2$ such that
    $$C_{\varphi}(F)=f,C_{\varphi}(G)=g,C_{\varphi}(H)=fg$$
    Hence, $H\circ\varphi=fg=F\circ\varphi\cdot G\circ\varphi$, so $H=FG$ on $\varphi(X_1)$. 
    Since we assume that $\varphi(X_1)$ is a faithful subset, we have $H=FG$ on $X_2$.

    The rest of the proof is similar to that of Proposition \ref{prop : dim}, noting that $K\equiv 1$, and is omitted.
\end{proof}
\begin{remark}
When $W_{\lambda,\varphi}:\cH_2\to\cH_1$ is invertible, then since $W_{\lambda,\varphi}^\ast:\cH_2\to\cH_1$ is also invertible, the set $\set{k_{\varphi(x)}^2:x\in X_1}$ spans a dense subspace of $\cH_2$, so any element $f\in\cH_2$ is completely determined by it values on $\varphi(X_1)$, i.e. if there exists $g\in\cH_2$ such that $g=f$ on $\varphi(X_1)$ then $f-g\perp\set{k_{\varphi(x)}^2:x\in X_1}$ so $f=g$ on $X_2$.    
\end{remark}
Given $f\in \cH$, we say that $f$ is \emph{cyclic for $\Mult(\cH)$} if the closed $\Mult(\cH)$-invariant subspace generated by $f$ is $\cH$, i.e. $\ol{\Mult(\cH)f}=\cH$. 
\begin{lemma}\label{lemm : density}
    Assume that $\Mult(\cH)$ is contained in $\cH$ and forms a dense subset. Let $T:\cH\to\cH$ be a bounded operator that commutes with every multiplication operator; then $T(\textbf{1})\in\Mult(\cH)$ and $T$ is the corresponding multiplication operator.
\end{lemma}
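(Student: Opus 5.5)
Set $\psi = T(\textbf{1})$, which lies in $\cH$. The strategy is to show first that $T\phi = \psi\phi$ for every multiplier $\phi$, using commutation, and then to extend this identity to all of $\cH$ by density of $\Mult(\cH)$ together with continuity of point evaluations. That $\psi$ is a multiplier then follows because $T$ maps $\cH$ into $\cH$.

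Since $\textbf{1}\in\Mult(\cH)\subseteq\cH$, we have $\psi\in\cH$. For any $\phi\in\Mult(\cH)$ we have $\phi=M_\phi\textbf{1}$, and $T$ commutes with $M_\phi$, so
\[
T\phi = TM_\phi\textbf{1} = M_\phi T\textbf{1} = \phi\psi .
\]
Thus $T$ agrees with multiplication by $\psi$ on the dense subspace $\Mult(\cH)$.

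Now fix $f\in\cH$. By density, choose $\phi_n\in\Mult(\cH)$ with $\phi_n\to f$ in norm. Boundedness of $T$ gives $T\phi_n\to Tf$ in norm. Norm convergence implies pointwise convergence, since point evaluations are bounded, so for each $x\in X$ we get
\[
(Tf)(x)=\lim_n (T\phi_n)(x)=\lim_n \phi_n(x)\psi(x)=f(x)\psi(x).
\]
Hence $Tf=\psi f$ for every $f\in\cH$. In particular $\psi f\in\cH$ for all $f\in\cH$, which means $\psi\in\Mult(\cH)$ and $T=M_\psi=M_{T(\textbf{1})}$.

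I do not expect a genuine obstacle. The only point needing care is the passage from norm convergence to pointwise convergence, which holds because $\cH$ is a functional Hilbert space. Uniqueness of the multiplier symbol, which the phrase ``the corresponding multiplication operator'' implicitly uses, is automatic here: the symbol is recovered as $M_\psi\textbf{1}=\psi$.
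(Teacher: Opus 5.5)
Your proof is correct and follows the same route as the paper's. You take $T(\textbf{1})$ as the symbol, use commutation to get $T\phi=\phi\,T(\textbf{1})$ on the dense set $\Mult(\cH)$, and extend to all of $\cH$ because norm convergence implies pointwise convergence; your remark that $\textbf{1}\in\Mult(\cH)\subseteq\cH$ also justifies the step $\phi=M_\phi\textbf{1}$, which the paper uses without comment.
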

\begin{proof}
    Set $\lambda=T(\textbf{1})$. Fix $f\in\cH$, there are $\phi_n\in\Mult(\cH)$ such that $\phi_n\to f$ in $\cH$. Then
    $$T(f)=\lim _nT(\phi_n)=\lim_n TM_{\phi_n}(\textbf{1})=\lim_n M_{\phi_n}(\lambda)=\lim_n \phi_n\lambda.$$
    Recall that weak convergence in Hilbert function spaces implies pointwise convergence, so $T(f)=\lambda f$, and we obtain the result. 
\end{proof}

\begin{proposition}\label{prop : algiso2}
    Assume that $\cH_1$ and $\cH_2$ are $2$-interpolating and $\Mult(\cH_2)$ is contained in $\cH_2$ and forms a dense subset.
    If $W_{\lambda,\varphi}:\cH_2\to\cH_1$ is invertible, then the induced map $C_\varphi:\Mult(\cH_2)\to\Mult(\cH_1)$ is an isomorphism.
    Moreover, $\lambda$ is cyclic for $\Mult(\cH_1)$.
\end{proposition}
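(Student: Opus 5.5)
The plan is to run the conjugation argument of Proposition \ref{prop : isoalg} in both directions. The difference is that surjectivity of $\varphi$ is not available here, so the converse direction will use Lemma \ref{lemm : density} in its place.

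\textbf{Step 1.} By Lemma \ref{lem : inj}, applied to the injective operator $W_{\lambda,\varphi}^\ast$, $\lambda$ never vanishes and $\varphi$ is injective. For $\Phi\in\Mult(\cH_2)$ the computation from Proposition \ref{prop : isoalg} goes through unchanged and gives
$$W_{\lambda,\varphi}M_\Phi W_{\lambda,\varphi}^{-1}=M_{\Phi\circ\varphi}.$$
That computation only needs $W_{\lambda,\varphi}$ to be invertible, not $\varphi$ to be onto. Hence $\Phi\circ\varphi\in\Mult(\cH_1)$ and $\norm{M_{\Phi\circ\varphi}}\le\norm{W_{\lambda,\varphi}}\norm{W_{\lambda,\varphi}^{-1}}\norm{M_\Phi}$, so $C_\varphi$ is a bounded algebra homomorphism.

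\textbf{Step 2 (injectivity).} If $\Phi\circ\varphi=0$, then $M_\Phi=W_{\lambda,\varphi}^{-1}M_0W_{\lambda,\varphi}=0$. Since $\cH_2$ separates points, $\Phi=0$. Alternatively, use the Remark: $\Phi\in\Mult(\cH_2)\subseteq\cH_2$ is determined by its values on $\varphi(X_1)$.

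\textbf{Step 3 (surjectivity, the main obstacle).} Take $\phi\in\Mult(\cH_1)$ and set $T=W_{\lambda,\varphi}^{-1}M_\phi W_{\lambda,\varphi}$, which is bounded on $\cH_2$. We cannot write down the candidate $\phi\circ\varphi^{-1}$ directly, because $\varphi$ need not be onto. Instead, for any $\Psi\in\Mult(\cH_2)$, Step 1 gives
$$TM_\Psi=W_{\lambda,\varphi}^{-1}M_\phi M_{\Psi\circ\varphi}W_{\lambda,\varphi}=W_{\lambda,\varphi}^{-1}M_{\Psi\circ\varphi}M_\phi W_{\lambda,\varphi}=M_\Psi T,$$
since multiplication operators on $\cH_1$ commute. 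So $T$ commutes with every multiplication operator on $\cH_2$. By Lemma \ref{lemm : density}, $\Phi:=T(\textbf{1})\in\Mult(\cH_2)$ and $T=M_\Phi$. Then $M_\phi=W_{\lambda,\varphi}M_\Phi W_{\lambda,\varphi}^{-1}=M_{\Phi\circ\varphi}$, so $\phi=\Phi\circ\varphi$ and $C_\varphi$ is onto. The inverse is automatically bounded, with norm at most $\norm{W_{\lambda,\varphi}}\norm{W_{\lambda,\varphi}^{-1}}$, by the symmetric estimate.

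\textbf{Step 4 (cyclicity).} Note that $\lambda=W_{\lambda,\varphi}(\textbf{1})$. For $\Phi\in\Mult(\cH_2)$ we have $W_{\lambda,\varphi}(\Phi)=\lambda\,\Phi\circ\varphi$. Because $\Mult(\cH_2)$ is dense in $\cH_2$ and $W_{\lambda,\varphi}$ is onto and bounded, the set
$$\set{(\Phi\circ\varphi)\lambda:\Phi\in\Mult(\cH_2)}=W_{\lambda,\varphi}(\Mult(\cH_2))$$
is dense in $\cH_1$. By Step 3, $\set{\Phi\circ\varphi:\Phi\in\Mult(\cH_2)}=\Mult(\cH_1)$, so this set equals $\Mult(\cH_1)\lambda$. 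Hence $\ol{\Mult(\cH_1)\lambda}=\cH_1$, i.e. $\lambda$ is cyclic for $\Mult(\cH_1)$.
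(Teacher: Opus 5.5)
Your proposal is correct and follows essentially the same route as the paper: the conjugation identity $W_{\lambda,\varphi}M_\Phi W_{\lambda,\varphi}^{-1}=M_{\Phi\circ\varphi}$ gives well-definedness and injectivity, and surjectivity comes from the fact that $W_{\lambda,\varphi}^{-1}M_\phi W_{\lambda,\varphi}$ commutes with every $M_\Psi$, combined with Lemma~\ref{lemm : density}. Cyclicity follows from the density of $W_{\lambda,\varphi}(\Mult(\cH_2))$ in $\cH_1$. The differences are minor: you add explicit norm bounds for $C_\varphi$ and its inverse, and in Step 4 the inclusion $\Phi\circ\varphi\in\Mult(\cH_1)$ from Step 1 already suffices, so the equality from Step 3 is not needed.
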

\begin{proof}
    By the first part of the proof of Proposition \ref{prop : isoalg}, the map is well-defined and injective.
    For a nonzero multiplier $\phi\in \Mult(\cH_1)$ we define a map $T:\cH_2\to \cH_2$ by $T=W_{\lambda,\varphi}^{-1}M_\phi W_{\lambda,\varphi}$. 
    We aim to show that $T$ is the multiplication operator by $\Phi=T(\textbf{1})$.
    For $\Psi\in\Mult(\cH_2)$, notice that $TM_\Psi =M_\Psi T$, indeed we have
    \begin{align*}
        W_{\lambda,\varphi}TM_\Psi W_{\lambda,\varphi}^{-1}
        &= W_{\lambda,\varphi}TW_{\lambda,\varphi}^{-1} W_{\lambda,\varphi}M_\Psi W_{\lambda,\varphi}^{-1}
        = M_\phi M_{\Phi\circ\varphi} \\
        &   = M_{\Phi\circ\varphi} M_\phi
        =W_{\lambda,\varphi}M_\Psi W_{\lambda,\varphi}^{-1} W_{\lambda,\varphi} TW_{\lambda,\varphi}^{-1} \\
        & = W_{\lambda,\varphi}M_\Psi TW_{\lambda,\varphi}^{-1}.
    \end{align*}
    One can also take $f\in\cH_2$ and show that $TM_\Psi(f)=M_\Psi T(f)$ on $\varphi(X_1)$ and therefore on $X_2$.
    Thus, by Lemma \ref{lemm : density} $\Phi\in\Mult(\cH_2)$ and $C_\varphi(\Phi)=\phi$.
    
    For the second part, fix $f\in\cH_1$, and let $F\in\cH_2$ be such that $W_{\lambda,\varphi}(F)=f$. 
    By the density of the multipliers of $\cH_2$, there is a sequence $\Phi_n\in\Mult(\cH_2)$ which converges to $F$. Thus,
    $$f=W_{\lambda,\varphi}(F)=\lim_n W_{\lambda,\varphi}(\Phi_n)=\lim_n \lambda\Phi_n\circ{\varphi}.$$
    Since $\Phi_n\circ\varphi\in\Mult(\cH_1)$ we obtain that $f\in \ol{\Mult(\cH_1)\lambda}$ and the proof is complete.
\end{proof}

\section{Weighted Hardy Spaces}
\begin{definition}
    \emph{A weighted Hardy space in $d\in\N$ variables and radius $r\in(0,\infty]$} is a Hilbert function space of holomorphic functions on $r\bB_d$, in which the monomials $z^\alpha$, $\av{\alpha}\ge 0$ form a complete orthogonal set of nonzero vectors with $\frac{\norm{z^\alpha}^2}{\alpha !}=\frac{\norm{z^\beta}^2}{\beta !}$ whenever $\av{\alpha}=\av{\beta}$ and $\norm{\textbf{1}}=1$.
\end{definition}
For $d=1$ and $r\in[1,\infty]$, we obtain the known weighted Hardy space on the unit disk $\D$ \cite{CowenCarlC, Gilad}.
 
Let $G(t)=\sum_{n=0}^\infty a_nt^n$ be a power series with $a_0=1$ and $a_n> 0$ for all $n$ and radius of convergence $R\in(0,\infty]$, we call such a function \emph{a kernel generating function} or just \emph{generating function}. 
The assumptions that $a_0=1$ and $\norm{\textbf{1}}=1$ are used to show that weighted Hardy spaces are normalized at the origin. 
The following proposition has many adaptations in the literature; for example, see \cite[Lemma 2.9]{CowenCarlC}, \cite[Proposition 4.1]{GUO2004214}, \cite[Lemma 2.2]{hartz2017isomorphism} and \cite[Proposition 1]{martin2019co}. It reveals the connection between weighted Hardy spaces and generating functions:
\begin{proposition}
    Let $\cH$ be a Hilbert function space defined on some open or closed ball in $\C^d$ with radius $r$ and kernel $K$.
    The following are equivalent:
    \begin{enumerate}
        \item $\cH$ is a weighted Hardy space in $d$ variables and radius $r$.
        
        \item There exist a generating function $G(t)=\sum_{n=0}^\infty a_nt^n$ with radius of convergence $R\ge r^2$ and $K(z,w)=G(\inp{z,w}_d)$.
    \end{enumerate}
\end{proposition}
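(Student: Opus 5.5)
For $(1)\Rightarrow(2)$, I would expand the kernel in the orthogonal basis of monomials. Since $\set{z^\alpha}$ is a complete orthogonal set, $\set{z^\alpha/\norm{z^\alpha}}$ is an orthonormal basis. The standard formula for a reproducing kernel then gives
$$K(z,w)=\sum_\alpha \frac{z^\alpha\ol{w}^\alpha}{\norm{z^\alpha}^2},$$
converging pointwise on the domain. Using the hypothesis $\norm{z^\alpha}^2=c_{\av{\alpha}}\,\alpha!$, set $a_n=\frac{1}{n!\,c_n}$. Grouping the terms by $\av{\alpha}=n$ and applying the multinomial theorem $\sum_{\av{\alpha}=n}\frac{n!}{\alpha!}z^\alpha\ol{w}^\alpha=\inp{z,w}_d^n$, I obtain $K(z,w)=\sum_n a_n\inp{z,w}^n_d=G(\inp{z,w}_d)$. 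The normalization $\norm{\textbf{1}}=1$ gives $a_0=1$, and $a_n>0$ because the monomials are nonzero. For the radius of convergence, I would evaluate at $z=w$: $K(z,z)=\sum a_n\av{z}^{2n}<\infty$ for every $z$ in the domain. The terms are nonnegative, so the rearrangement is harmless. This forces $R\ge r^2$: for the open ball, because $\av{z}^2$ ranges over $[0,r^2)$; for the closed ball, we even get convergence at $r^2$.

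For $(2)\Rightarrow(1)$, I would start from $K=G(\inp{z,w})$ and expand it by the multinomial theorem as $K(z,w)=\sum_\alpha a_{\av{\alpha}}\frac{\av{\alpha}!}{\alpha!}z^\alpha\ol{w}^\alpha$. This converges absolutely because $\sum_n a_n(\av{z}\av{w})^n<\infty$ for $z,w$ in the ball (Cauchy--Schwarz gives $\av{z}\av{w}<r^2\le R$ in the open case). The expansion exhibits $K$ as $\sum_\alpha e_\alpha(z)\ol{e_\alpha(w)}$ with $e_\alpha=\sqrt{a_{\av{\alpha}}\av{\alpha}!/\alpha!}\;z^\alpha$.

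I would then invoke the standard fact (e.g.\ Paulsen--Raghupathi) that when $K=\sum e_\alpha\ol{e_\alpha}$ pointwise and the $e_\alpha$ are linearly independent in a strong sense, the $e_\alpha$ form a Parseval frame for $\cH$, and in fact an orthonormal basis if $\sum c_\alpha e_\alpha=0$ with $(c_\alpha)\in\ell^2$ forces $c_\alpha=0$. That injectivity condition holds here: a square-summable combination defines a power series convergent on the open ball, and by uniqueness of power-series coefficients it vanishes identically only when every $c_\alpha$ is zero. Hence the $e_\alpha$ are orthonormal, so the monomials are orthogonal and nonzero with $\norm{z^\alpha}^2=\frac{\alpha!}{a_{\av{\alpha}}\av{\alpha}!}$. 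Consequently $\norm{z^\alpha}^2/\alpha!$ depends only on $\av{\alpha}$, and $\norm{\textbf{1}}^2=1/a_0=1$.

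It remains to check that every $f\in\cH$ is holomorphic on the open ball. Each $f=\sum c_\alpha e_\alpha$ converges locally uniformly there, since by Cauchy--Schwarz $\av{f(z)}\le\norm{f}K(z,z)^{1/2}$ and $K(z,z)$ is bounded on compact subsets.

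The main obstacle is this frame/orthonormal-basis step: justifying that a pointwise expansion of the kernel yields orthonormality, together with the boundary subtleties when the ball is closed. In the closed case, functions are given on the boundary sphere and one must check that the injectivity still holds. I would reduce this to the open ball, using the fact that the open ball is a uniqueness set.
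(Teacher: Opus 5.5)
Your proof is correct. For $(1)\Rightarrow(2)$ it is the same as the paper's argument: Parseval expansion of $K$ in the normalized monomials, regrouping by $\av{\alpha}$ via the multinomial identity, and reading off $R\ge r^2$ from $K(z,z)=G(\av{z}^2)<\infty$.

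For $(2)\Rightarrow(1)$ you take a different route.
\begin{itemize}
\item The paper \emph{defines} $H$ as the abstract Hilbert space with orthonormal basis $\set{e_\alpha}$. It bounds $\av{\on{ev}_w(h)}\le\sqrt{G(\av{w}^2)}\norm{h}$, computes the kernel of $H$ to be $G(\inp{z,w}_d)$, and concludes $H=\cH$ by Moore's uniqueness.
\item You instead apply the sum-of-squares (feature map/Papadakis) description to $\cH$ itself, and reduce orthonormality of $\set{e_\alpha}$ to injectivity of the synthesis map $\ell^2\to\cH$.
\end{itemize}

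Your version makes explicit a step the paper only uses tacitly. Regarding $H$ as a space of functions on the ball requires exactly that $\sum_\alpha c_\alpha e_\alpha\equiv 0$ with $(c_\alpha)\in\ell^2$ forces every $c_\alpha=0$. That is your power-series uniqueness argument. Restricting to the open ball settles the closed case, so the ``main obstacle'' you flag is not a real one. The paper's version, in turn, is more self-contained.

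Three small points to tidy up in yours:
\begin{itemize}
\item Cite the frame result in a form that does not presuppose $e_\alpha\in\cH$. Either use the feature-map theorem with $\Phi(x)=(\ol{e_\alpha(x)})_\alpha\in\ell^2$, or first get $e_\alpha\in\cH$ from the fact that $K-e_\alpha\ol{e_\alpha}=\sum_{\beta\ne\alpha}e_\beta\ol{e_\beta}$ is a kernel.
\item The Parseval-frame conclusion needs no independence hypothesis; independence only upgrades it to an orthonormal basis.
\item In the closed-ball case, absolute convergence at boundary points needs $G(r^2)<\infty$. This holds because $K(z,z)=G(r^2)$ must be finite for $\av{z}=r$.
\end{itemize}
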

\begin{proof}
    It is well-known that if $\inp{\cdot,\cdot}_d$ is the standard inner product on $\C^d$ then 
    $$\inp{z,w}_d^n=\sum_{\av{\alpha}=n}\frac{\av{\alpha}!}{\alpha!}z^\alpha\ol{w}^\alpha.$$

    $(1)\Rightarrow (2):$ Since the monomials $z^\alpha$, $\av{\alpha}\ge 0$ form a complete orthogonal set of nonzero vectors, by Parseval's identity, we have
    $$K(z,w)=\sum_\alpha \frac{z^\alpha \ol{w}^\alpha}{\norm{z^\alpha}^2}$$
    for all $z,w\in r\bB_d$.
    Using that $\frac{\norm{z^\alpha}^2}{\alpha !}=\frac{\norm{z^\beta}^2}{\beta !}$ whenever $\av{\alpha}=\av{\beta}$, we define a sequence $a_n$ such that $\norm{z^\alpha}^{-2}=a_{\av{\alpha}}\frac{\av{\alpha}!}{\alpha!}$.
    Hence, 
    $$K(z,w)
    =\sum_\alpha a_{\av{\alpha}}\frac{\av{\alpha}!}{\alpha!} z^\alpha\ol{w}^\alpha
    =\sum_{n=0}^\infty a_n\sum_{\av{\alpha}=n}\frac{\av{\alpha}!}{\alpha!}z^\alpha\ol{w}^\alpha
    =\sum_{n=0}^\infty a_n\inp{z,w}_d^n.$$
    So, we define $G(t)=\sum_{n=0}^\infty a_nt^n$. 
    Notice that $a_0=\frac{1}{\norm{\textbf{1}}^2}=1$ and $a_n>0$ for all $n$.
    It remains to show that the radius of convergence $R$ is at least $r^2$.
    Assume for the sake of reaching a contradiction that $R<r^2$, let $t_0\in(R,r^2)$, then for a point $x\in r\bB_d$ with norm $t_0$, we obtain $G(t_0)=K(x,x)<\infty$, which is a contradiction.

    $(2)\Rightarrow (1):$ This is \cite[Proposition 4.1]{GUO2004214}.
    We add the proof for completeness.
    We set $e_\alpha=\pare{a_{\av{\alpha}}\frac{\av{\alpha}!}{\alpha!}}^{\nicefrac{1}{2}} z^\alpha$, and let $H$ be the Hilbert space such that $\set{e_\alpha:\alpha\text{ is a multi-index}  }$ is its orthonormal basis. 
    Our goal is to show that the spaces $H$ and $\cH$ coincide.
    For $h\in H$ we can write $h=\sum_\alpha b_\alpha e_\alpha$ with $\sum_\alpha \av{b_\alpha}^2<\infty$. 
    For $w\in r\bB_d$ let $\on{ev}_w(h)$ be the evaluation of $h$ at $w$ which satisfies:
    \begin{align*}
        \av{\on{ev}_w(h)}=&\av{\sum_\alpha b_\alpha e_\alpha(w)}
        \le \pare{\sum_\alpha \av{b_\alpha}^2}^{\nicefrac{1}{2}}\pare{\sum_\alpha \av{e_\alpha(w)}^2}^{\nicefrac{1}{2}} \\
        =&\pare{\sum_\alpha \av{b_\alpha}^2}^{\nicefrac{1}{2}}\sqrt{G(\av{w}^2)}=\sqrt{G(\av{w}^2)}\norm{h}.
    \end{align*}
    So, we can think of $H$ as a Hilbert function space on $r\bB_d$.
    Moreover, since $G$ is an increasing function of $\av{w}$, this bound shows that convergence in $H$ yields uniform convergence on compact subsets of $r\bB_d$.
    Since the orthonormal basis of $H$ consists of polynomials, each function in $H$ is a uniform limit of polynomials on compact subsets of $r\bB_d$ and hence is a holomorphic function on $r\bB_d$.
    Let $K^H$ be the kernel of $H$ then   
    \begin{gather*}
        K^H(z,w)=\sum_\alpha \ol{e_\alpha(w)}e_\alpha(z)
        =\sum_\alpha a_{\av{\alpha}}\frac{\av{\alpha}!}{\alpha!} z^\alpha\ol{w}^\alpha
        =G(\inp{z,w}_d)=K(z,w).
    \end{gather*}
    By the uniqueness part of Moore's theorem \cite{moore1939generalanalysis2}, we must have $H = \cH$.
    Finally, we see that $\norm{z^\alpha}^{-2}=a_{\av{\alpha}}\frac{\av{\alpha}!}{\alpha!}$, so the monomials are nonzero vectors with $\frac{\norm{z^\alpha}^2}{\alpha!}=\frac{\norm{z^\beta}^2}{\beta !}$ whenever $\av{\alpha}=\av{\beta}$ and $\norm{\textbf{1}}=1$.
    This shows that $\cH$ is a weighted Hardy space in $d$ variables and radius $r$, completing the proof. 
\end{proof}
\begin{remark}
    In fact, every generating function gives rise to a weighted Hardy space; for example, see \cite[Theorem 4.14]{Paulson}.
    Thus, for fixed $d$, there is a bijective correspondence between weighted Hardy spaces in $d$ variables and generating functions.
\end{remark}
\begin{corollary}
    Let $\cH$ be a weighted Hardy space in $d$ variables with generating function $G$; then $\cH$ is normalized at the origin and fully interpolating.
\end{corollary}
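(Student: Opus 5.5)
Both assertions will be read off from the kernel formula $K(z,w)=G(\inp{z,w}_d)$ provided by the preceding proposition, together with the orthogonal monomial basis.

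For normalization at the origin, I substitute $w=0$. Since $\inp{z,0}_d=0$ for every $z$ in the ball, we get $K(z,0)=G(0)=a_0=1$. So $k_0\equiv\textbf{1}$, which is exactly the definition of $\cH$ being normalized at $0$.

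For full interpolation, I will use condition (3) of the equivalence proposition in the preliminaries. Given distinct points $x_1,\dots,x_m$ in the ball, I must produce $p_1,\dots,p_m\in\cH$ with $p_i(x_j)=\delta_{ij}$. The key observation is that $\cH$ contains every polynomial, because each monomial $z^\alpha$ is a vector of $\cH$ (the monomials form a complete orthogonal set of nonzero vectors). So it suffices to find polynomials interpolating the Kronecker data at finitely many distinct points of $\C^d$.

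This is classical Lagrange interpolation in several variables. For each pair $i\neq j$, choose a linear functional $\ell_{ij}$ on $\C^d$ with $\ell_{ij}(x_i)\neq\ell_{ij}(x_j)$; one coordinate in which $x_i$ and $x_j$ differ will do. Then set
$$p_i(z)=\prod_{j\neq i}\frac{\ell_{ij}(z)-\ell_{ij}(x_j)}{\ell_{ij}(x_i)-\ell_{ij}(x_j)}.$$
Each $p_i$ is a polynomial, hence lies in $\cH$. It equals $1$ at $x_i$, and it vanishes at $x_j$ for $j\neq i$ because the $j$-th factor is zero there. This gives $m$-interpolation for every $m$, that is, $\cH$ is fully interpolating.

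No real obstacle appears. The only point needing care is to state that polynomials belong to $\cH$. That membership is immediate from the definition of a weighted Hardy space, or equivalently from the orthonormal basis $e_\alpha$ built in the proof of the previous proposition.
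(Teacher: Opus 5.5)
Your proof is correct and follows the same route as the paper. You obtain normalization from $K(z,0)=G(0)=a_0=1$, and full interpolation from the fact that $\cH$ contains all polynomials. Where the paper only says such interpolating polynomials are easy to find, you write out the multivariable Lagrange construction explicitly.
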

\begin{proof}
    First, for any point $x$ we have $k_x=G(\inp{\cdot,x}_d)$, so  $k_x(0)=G(\inp{0,x}_d)=G(0)=1$ which shows that $\cH$ is normalized at the origin and $k_x$ is not the zero function.
      
    Second, since $\cH$ contains the polynomials, for every natural $m$, it is easy to find $m$ polynomials interpolating $m$ points.
    Hence, by definition, $\cH$ is fully interpolating.
\end{proof}

The following lemma is inspired by \cite[Theorem 2.15]{CowenCarlC} and \cite[Lemma 5.3]{hartz2017isomorphism}. 
Roughly speaking, it shows that for a weighted Hardy space, the ball is a maximal domain.
\begin{lemma}\label{lem : domof}
    Let $\cH$ be a weighted Hardy space in $d$ variables with generating function $G(t)=\sum_{n=0}^\infty a_nt^n$. Let $r^2\in(0,\infty]$ be the radius of  convergence of $G$ then,
    \begin{enumerate}
    \item If $r=\infty$, then $\C^d$ is a maximal domain for $\cH$.

    \item If $r<\infty$ and $G(r^2) = \infty$, then $r\bB_d$ is a maximal domain for $\cH$.
    
    \item If $r<\infty$ and $G(r^2) < \infty$, then $\ol{r\mathbb{B}_d}$ is a maximal domain for $\cH$.
    \end{enumerate}
\end{lemma}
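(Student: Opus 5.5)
Given a multiplicative functional $\rho$ on $\cH$, the plan is to identify it with the point $w=(\rho(z_1),\dots,\rho(z_d))\in\C^d$. Then we show that $w$ lies in the claimed domain and that $\rho=\on{ev}_w$.

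First, by multiplicativity applied to $\textbf{1}\cdot f$ we get $\rho(\textbf{1})=1$. Since monomials multiply within $\cH$, we also get $\rho(z^\alpha)=w^\alpha$ for every multi-index $\alpha$. Because $\rho$ is bounded, the Riesz representation gives $\rho=\inp{\cdot,h}$ for some $h\in\cH$. The monomials form a complete orthogonal set, so the coefficients of $h$ are determined: $h=\sum_\alpha \ol{w}^\alpha z^\alpha/\norm{z^\alpha}^2$. Using $\norm{z^\alpha}^{-2}=a_{\av{\alpha}}\frac{\av{\alpha}!}{\alpha!}$ and the multinomial identity from the previous proposition, we compute
$$\norm{h}^2=\sum_\alpha \frac{\av{w^\alpha}^2}{\norm{z^\alpha}^2}=\sum_{n}a_n\sum_{\av{\alpha}=n}\frac{n!}{\alpha!}\av{w^\alpha}^2=\sum_n a_n\av{w}^{2n}=G(\av{w}^2).$$
Hence $\rho$ is bounded exactly when $G(\av{w}^2)<\infty$, understood as a series of positive terms. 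This is the key step, and it determines $w$'s location. In case (1), $w$ is arbitrary in $\C^d$. In case (2), convergence of the series forces $\av{w}^2<r^2$, because the series diverges beyond the radius of convergence and, by assumption, also at $r^2$. In case (3), it forces $\av{w}\le r$.

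It remains to show $\rho(f)=f(w)$ for all $f\in\cH$. Write $f=\sum_\alpha c_\alpha z^\alpha$ with convergence in $\cH$. By continuity, $\rho(f)=\sum_\alpha c_\alpha w^\alpha$. When $w$ lies in the open ball, this series is the Taylor expansion of $f$ at $w$, so $\rho(f)=f(w)$; equivalently $h=k_w=G(\inp{\cdot,w}_d)$. In case (3), for $\av{w}=r$, we need a little care about what the paper means by $\cH$ as a space on the closed ball. I would argue as follows. The kernel $G(\inp{z,w}_d)$ converges absolutely for $\av{z},\av{w}\le r$, by the estimate $\sum_n a_n\av{\inp{z,w}}^n\le G(r^2)$. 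By the same Cauchy--Schwarz bound as in the proof of the previous proposition, every $f\in\cH$ has a power series that converges absolutely and uniformly on $\ol{r\bB_d}$. So $f$ extends continuously to the closed ball with value $\sum c_\alpha w^\alpha$ at boundary points, and evaluation there equals $\inp{f,k_w}$ with $k_w=h$. Thus every multiplicative functional is point evaluation at a point of the stated set.

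Conversely, we must check that each point of the stated set gives a bounded evaluation, so that $\cH$ genuinely lives there. This follows from $\norm{k_w}^2=G(\av{w}^2)<\infty$. Injectivity of $w\mapsto\on{ev}_w$ holds because the coordinate functions separate points. The main obstacle I expect is the boundary case (3): justifying that functions in $\cH$ are well-defined and continuous on the closed ball, and that evaluation there is given by the power series. I would handle it through the uniform convergence estimate $\av{f(z)}\le\sqrt{G(\av{z}^2)}\norm{f}\le\sqrt{G(r^2)}\norm{f}$, which allows passing from polynomials to all of $\cH$.
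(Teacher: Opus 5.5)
Your proof is correct and follows essentially the paper's argument: set $w=(\rho(z_1),\dots,\rho(z_d))$, expand the Riesz representer of $\rho$ in the monomial basis, use $\norm{h}^2=G(\av{w}^2)$, identify $h=k_w$ in the open ball, and extend the kernel to $\ol{r\bB_d}$ in case (3). The one difference is in how $w$ is located: the paper first shows $w\in\ol{r\bB_d}$ by applying $\rho$ to $k_\zeta=\sum_n a_n\inp{\cdot,\zeta}_d^n$ for $\zeta\in r\bB_d$, and uses the norm identity only for $\av{w}=r$, whereas your single computation of $\norm{h}^2$ for arbitrary $w$ does both jobs at once, and your Cauchy--Schwarz treatment of the boundary case is more explicit than the paper's.
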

\begin{proof}
    Assume that $\rho$ is a multiplicative functional on $\cH$, let $w=(w_1,\dots,w_d)$ where $w_i=\rho(z_i)$. 
    It follows that 
    $$\rho(z^\alpha)=\rho(z_1^{\alpha_1})\cdots\rho(z_d^{\alpha_d})=\rho(z_1)^{\alpha_1}\cdots\rho(z_d)^{\alpha_d}=w^\alpha$$
    for any multi-index with $\av{\alpha}\ge 1$. 
    Recall that $\rho(\textbf{1})=1$. 
    Let us show that $w \in \ol{r\bB_d}$ (when $r=\infty$, there is nothing to prove). 
    To this end, let $\zeta\in r\bB_d$ then since $\cH$ contains the the monomials we have  $\inp{\cdot,\zeta}_d^n\in\cH$ for all natural $n$, thus
    $$\rho(\inp{\cdot,\zeta}_d)=\sum_{i=1}^d w_i\overline{\zeta_i}=\inp{w,\zeta}_d$$
    and
    $$\rho(k_\zeta)=\sum_{n=0}^\infty a_n\rho(\inp{\cdot,\zeta}_d^n)=\sum_{n=0}^\infty a_n\rho(\inp{\cdot,\zeta})_d^n=\sum_{n=0}^\infty a_n\inp{w,\zeta}_d^n$$
    Hence, $\av{\inp{w,\zeta}_d}\le r^2$, and since $\zeta\in r\bB_d$ was arbitrary, we conclude that $w\in\ol{r\bB_d}$.
    
    Let $k\in \cH$ be such that $\rho(f)=\inp{f,k}$ for all $f\in\cH$. As the monomials are orthogonal and span a dense subspace of $\cH$, we see that
    $$k(z)=\inp{k,\textbf{1}}1+\sum_{\alpha}\ol{\rho(z^\alpha)}\frac{z^\alpha}{\norm{z^\alpha}^2}
    =\sum_\alpha\frac{\ol{w}^\alpha}{\norm{z^\alpha}^2}z^\alpha$$
    for all $z\in r\bB_d$ by pointwise convergence. 
    If $w\in r\bB_d $, we see that $k=G(\inp{\cdot,w}_d)=k_w$ which is the linear functional of evaluation at $w$, in particular this proves (1).
    If $\av{w}= r$ then:
    $$\norm{k}^2=\sum_\alpha\frac{\ol{w}^\alpha w^\alpha}{\norm{z^\alpha}^2}=G(\inp{w,w}_d)= G(r^2).$$
    Hence, if $G(r^2)=\infty$, this contradicts the boundedness of the linear functional determined by $k$, and we obtain (2).
    For (3), we observe that the kernel of $\cH$ can be extended to a kernel on $\ol{r\bB_d}\times\ol{r\bB_d}$, hence all functions in $\cH$ extend to functions on $\ol{r\bB_d}$, and $\cH$ becomes a Hilbert function space on $\ol{r\bB_d}$ in this way and $\overline{r\mathbb{B}_d}$ is a maximal domain for $\cH$.
\end{proof}
From now on, when we say that $\cH$ is a weighted Hardy space in $d$ variables and radius $r\in(0,\infty]$, we consider the space defined on the suitable domain aligned with the last lemma with the same $r$. This domain is frequently called \emph{the natural maximal domain}.

\medskip
Let $\cH$ be a weighted Hardy space with the natural maximal domain $X$. 
If the generating function of $\cH$ is entire, then $\Mult(\cH)$ is just the constant functions because $\Mult(\cH)\subseteq H^\infty(X)$ as sets. 
Accordingly, $\cH$ is not algebraically consistent on $X$ as every function in $\cH$ that gets one at the origin induces a partially multiplicative functional on $\cH$. 
We aim to classify all the algebraically consistent weighted Hardy spaces, starting with the following lemma.
\begin{lemma}\label{lem : cordinat}
    Let $\cH$ be a weighted Hardy space in $d$ variables. 
    Suppose that $z_i\notin\Mult(\cH)$ for some $1\leq i\leq d$.
    For $\lambda\in \Mult(\cH)$ write $\lambda(z)=\sum_\alpha \lambda_\alpha z^\alpha$, then $\lambda_{e_i}=0$ where
    $e_i$ is the $i$-th standard multi-index.
\end{lemma}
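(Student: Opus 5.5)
The plan is to exploit the rotation invariance of $\cH$ and average a multiplier over the torus, so as to isolate its linear term in $z_i$. If $\lambda_{e_i}\neq 0$, this averaging will produce the multiplier $\lambda_{e_i}z_i$, and hence $z_i\in\Mult(\cH)$, contradicting the hypothesis.

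\textbf{Step 1: diagonal unitaries act unitarily.} For $\theta=(\theta_1,\dots,\theta_d)\in\mathbb{T}^d$, let $U_\theta=\on{diag}(\theta_1,\dots,\theta_d)\in\mathcal{U}(d)$. Then $U_\theta$ maps the natural maximal domain $X$ onto itself, and
$$K(U_\theta z,U_\theta w)=G(\inp{U_\theta z,U_\theta w}_d)=G(\inp{z,w}_d)=K(z,w).$$
By Proposition \ref{Thm : bound}, applied with constant $c=1$ in both directions, $C_{U_\theta}$ is a contraction with contractive inverse $C_{U_\theta^{-1}}$, so it is unitary. This can also be seen directly: $C_{U_\theta}$ multiplies each monomial $z^\alpha$ by the unimodular scalar $\theta^\alpha$, and the monomials form an orthogonal basis. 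Consequently $C_{U_\theta}M_\lambda C_{U_\theta}^{*}=M_{\lambda\circ U_\theta}$, so $\lambda\circ U_\theta\in\Mult(\cH)$ with $\norm{M_{\lambda\circ U_\theta}}=\norm{M_\lambda}$.

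\textbf{Step 2: the averaged operator.} Define a bounded operator $T$ on $\cH$ through the bounded sesquilinear form
$$\inp{Tf,g}=\int_{\mathbb{T}^d}\ol{\theta_i}\,\inp{M_{\lambda\circ U_\theta}f,g}\,d\theta ,$$
where $d\theta$ is normalized Haar measure. The integrand is continuous in $\theta$ because $\theta\mapsto C_{U_\theta}$ is strongly continuous, which can be checked on monomials and extended by density together with the uniform norm bound. Hence $\norm{T}\le\norm{M_\lambda}$.

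\textbf{Step 3: identify $T$.} Testing against $g=k_z$ gives
$$(Tf)(z)=f(z)\int_{\mathbb{T}^d}\ol{\theta_i}\,\lambda(U_\theta z)\,d\theta .$$
Expand $\lambda(U_\theta z)=\sum_\alpha\lambda_\alpha\theta^\alpha z^\alpha$. For $z$ in the interior, or on the closed ball when the series converges there, this series converges uniformly in $\theta$, because $\av{U_\theta z}=\av{z}$. Integrating term by term, the orthogonality of characters kills every term except $\alpha=e_i$, so $(Tf)(z)=\lambda_{e_i}z_if(z)$. On the boundary of a closed-ball domain, I would instead invoke continuity of $Tf$ and $f$ together with density of the interior.

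Thus $T=M_{\lambda_{e_i}z_i}$ is a bounded multiplication operator. If $\lambda_{e_i}\neq0$, then $z_i\in\Mult(\cH)$, a contradiction, so $\lambda_{e_i}=0$.

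The main obstacle is Step 2: making the weak integral rigorous, in particular the continuity of $\theta\mapsto C_{U_\theta}$ and the interchange of the integral with point evaluation. The cleanest route is to define $T$ only through the sesquilinear form and then compute it on the kernel functions $k_z$, as in Step 3.
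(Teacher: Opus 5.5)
Your proof is correct, but it takes a different route from the paper. The paper does not use rotations at all. It works directly in the orthogonal monomial basis: $\norm{\lambda z^\beta}^2=\sum_\alpha\av{\lambda_\alpha}^2\norm{z^{\alpha+\beta}}^2\ge\av{\lambda_{e_i}}^2\norm{z^{\beta+e_i}}^2$, while $\norm{\lambda z^\beta}\le\norm{M_\lambda}\norm{z^\beta}$. Hence $\norm{z_ip}\le\av{\lambda_{e_i}}^{-1}\norm{M_\lambda}\norm{p}$ for every polynomial $p$, and the paper then extends by density.

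You instead realize $\lambda_{e_i}M_{z_i}$ as the Fourier average $\int_{\mathbb{T}^d}\ol{\theta_i}\,C_{U_\theta}M_\lambda C_{U_\theta}^{\ast}\,d\theta$ of unitary conjugates of $M_\lambda$. This is the same averaging-over-rotations device the paper itself uses later, in the proof of Proposition~\ref{prop : end}. Both arguments give the same estimate $\av{\lambda_{e_i}}\norm{M_{z_i}}\le\norm{M_\lambda}$.

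The paper's version is shorter and entirely elementary. It avoids the strong continuity, weak integral and termwise integration that you have to justify. Yours needs only that the torus acts by unitaries, together with the reproducing property. It produces the multiplication operator on all of $\cH$ at once, with no extension-from-polynomials step. It would also survive in settings with less symmetry: averaging over the circle action $U_t$ alone shows that each homogeneous component of a multiplier is a multiplier of no larger norm.

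A small tightening of your Step 3: uniformity in $\theta$ follows from $\av{\lambda_\alpha\theta^\alpha z^\alpha}=\av{\lambda_\alpha}\av{z^\alpha}$ and the Cauchy--Schwarz bound $\sum_\alpha\av{\lambda_\alpha}\av{z^\alpha}\le\norm{\lambda}_{\cH}\sqrt{G(\av{z}^2)}$. This bound is finite at every point of the natural maximal domain, boundary included, so your separate continuity argument at the boundary is unnecessary.
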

\begin{proof}
    For every multi-index $\beta$, orthogonality of the monomials gives
    $$\norm{\lambda z^\beta}^2=\sum_\alpha \av{\lambda_\alpha}^2\norm{z^{\alpha+\beta}}^2\ge\av{\lambda_{e_i}}^2\norm{z^{e_i+\beta}}^2.$$
    On the other hand,
    $$\norm{\lambda z^\beta}^2\le \norm{M_\lambda}^2\norm{z^\beta}^2.$$
    Thus, if $\lambda_{e_i}\neq0$, then
    $$\norm{z^{e_i+\beta}}\le \frac{\norm{M_\lambda}}{\av{\lambda_{e_i}}}\norm{z^\beta}$$
    for every multi-index $\beta$.
    Consequently, for every polynomial $p(z)=\sum_\beta p_\beta z^\beta$,
    $$\norm{z_ip}^2=\sum_\beta \av{p_\beta}^2\norm{z^{{e_i}+\beta}}^2\le\frac{\norm{M_\lambda}^2}{\av{\lambda_{e_i}}^2}\sum_\beta \av{p_\beta}^2\norm{z^{\beta}}^2=\frac{\norm{M_\lambda}^2}{\av{\lambda_{e_i}}^2}\norm{p}^2.$$
    Thus, multiplication by $z_i$ extends boundedly to $\cH$, contrary to the assumption that $z_i\notin\Mult(\cH)$. 
    Hence $\lambda_{e_i}=0$.
\end{proof}
We now obtain a practical characterization of algebraically consistent weighted Hardy spaces.
\begin{theorem}\label{thm : graet}
    Let $\cH$ be a weighted Hardy space in $d$ variables. 
    The following are equivalent:
    \begin{enumerate}
        \item $\Mult(\cH)$ contains the coordinate functions.
        \item $\Mult(\cH)$ forms a dense subset of $\cH$.
        \item There exists $c\in\cH$ such that $\ol{\Mult(\cH)c}=\cH$.
        \item $\cH$ is algebraically consistent on its natural maximal domain.
    \end{enumerate}
\end{theorem}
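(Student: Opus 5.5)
The plan is to prove the cycle $(1)\Rightarrow(2)\Rightarrow(3)\Rightarrow(1)$, and separately the equivalence $(1)\Leftrightarrow(4)$. Lemma \ref{lem : cordinat} is the common tool for both converse directions. Throughout, I write $f=\sum_\alpha f_\alpha z^\alpha$ for the Taylor expansion at the origin. Since the monomials are orthogonal, each coefficient functional $f\mapsto f_\alpha=\inp{f,z^\alpha}/\norm{z^\alpha}^2$ is bounded on $\cH$.

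For $(1)\Rightarrow(2)$: if every $z_i$ is a multiplier, then $\Mult(\cH)$ is an algebra containing all polynomials. Since $\cH$ is normalized, $\Mult(\cH)\subseteq\cH$, and the polynomials are dense in $\cH$, so $\Mult(\cH)$ is dense. For $(2)\Rightarrow(3)$, take $c=\textbf{1}$.

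For $(3)\Rightarrow(1)$, I argue by contraposition. Suppose $z_i\notin\Mult(\cH)$ and let $c\in\cH$ be arbitrary. By Lemma \ref{lem : cordinat}, every $\lambda\in\Mult(\cH)$ has $\lambda_{e_i}=0$. Consequently
$$(\lambda c)_0=\lambda_0c_0,\qquad (\lambda c)_{e_i}=\lambda_0c_{e_i}+\lambda_{e_i}c_0=\lambda_0c_{e_i}.$$
Hence the bounded map $P:f\mapsto(f_0,f_{e_i})\in\C^2$ sends $\Mult(\cH)c$ into the line $\C\cdot(c_0,c_{e_i})$. By continuity, $P$ also sends $\ol{\Mult(\cH)c}$ into this line. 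But $P(\cH)=\C^2$, since $P(\textbf{1})=(1,0)$ and $P(z_i)=(0,1)$. So $\ol{\Mult(\cH)c}\neq\cH$.

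For $(4)\Rightarrow(1)$, again argue by contraposition. If $z_i\notin\Mult(\cH)$, define the bounded functional $\rho(f)=f_0+f_{e_i}$. For $\phi\in\Mult(\cH)$ and $g\in\cH$, Lemma \ref{lem : cordinat} gives $\phi_{e_i}=0$, so
$$\rho(\phi g)=\phi_0g_0+\phi_0g_{e_i}=\rho(\phi)\rho(g).$$
Thus $\rho$ is partially multiplicative. However, $\rho(z_i)^2=1$ while $\rho(z_i^2)=0$, so $\rho$ is not multiplicative, and $\cH$ is not algebraically consistent.

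For $(1)\Rightarrow(4)$, let $\rho$ be partially multiplicative. Since $\rho\neq0$ and $\textbf{1}$ is a multiplier, $\rho(\textbf{1})=1$. Set $w_j=\rho(z_j)$. Because each $z_j\in\Mult(\cH)$, induction gives $\rho(z^\alpha)=w^\alpha$, so $\rho(p)=p(w)$ for every polynomial $p$. Now let $k$ be the representing vector of $\rho$. As in the proof of Lemma \ref{lem : domof}, $k=\sum_\alpha \ol w^\alpha z^\alpha/\norm{z^\alpha}^2$, and therefore $\norm{k}^2=G(\av{w}^2)$. Finiteness of this norm forces $w$ to lie in the natural maximal domain:
\begin{itemize}
\item if $\av{w}$ exceeds the radius, the series for $G(\av{w}^2)$ diverges;
\item if $\av{w}$ equals the radius $r$ and $G(r^2)=\infty$, the norm is again infinite.
\end{itemize}
Once $w$ is in the domain, $k=k_w$, so $\rho$ is evaluation at $w$ and hence multiplicative. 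This step is the most delicate one: it must match the three cases of Lemma \ref{lem : domof}, and in the boundary case $G(r^2)<\infty$ it must identify $k$ with the extended kernel $k_w$.
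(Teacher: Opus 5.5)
Your proposal is correct and follows essentially the same route as the paper. Both use the cycle $(1)\Rightarrow(2)\Rightarrow(3)$, both use Lemma \ref{lem : cordinat} to kill the $z^{e_i}$-coefficient of multipliers in $(3)\Rightarrow(1)$ and $(4)\Rightarrow(1)$, and both use the representing-vector argument of Lemma \ref{lem : domof} for $(1)\Rightarrow(4)$. The differences are only cosmetic: your projection $f\mapsto(f_0,f_{e_i})$ replaces the paper's explicit annihilating vector and its case split on $c_0$, you check non-multiplicativity directly on $z_i\cdot z_i=z_i^2$ where the paper just asserts $\rho$ is not a point evaluation, and your single computation $\norm{k}^2=G(\av{w}^2)$ handles all the radius cases at once.
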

\begin{proof}
    Plainly, $(1)\Rightarrow(2)\Rightarrow(3)$. 
    
    For $(1)\Rightarrow(4)$, we need to show that if $\rho$ is a \emph{partially} multiplicative functional on $\cH$, then it comes from point evaluation. 
    Set $w=(\rho(z_1),\dots,\rho(z_d))$, then since $\Mult(\cH)$ contains all the monomials we have $\rho(z^\alpha)=w^\alpha$ for every multi-index $\alpha$.
    Now, repeat the proof as in Lemma \ref{lem : domof} to obtain that $\rho$ is the point evaluation at $w$. The implication can also be deduced from \cite[Theorem 1.7]{MR4225497}. 

    Next we prove that $(3)\Rightarrow(1)$. 
    Assume that $z_i\notin\Mult(\cH)$ for some $1\leq i\leq d$.
    Now suppose that $c(z)=\sum_\alpha c_\alpha z^\alpha\in\cH$. We show that $c$ cannot be cyclic for $\Mult(\cH)$. 
    Let $\lambda\in\Mult(\cH)$, write $\lambda(z)=\sum_\alpha \lambda_\alpha z^\alpha$, then
    $(\lambda c)_0=\lambda_0c_0$ and $(\lambda c)_{e_i}=\lambda_0c_{e_i}+\lambda_{e_i}c_0=\lambda_0c_{e_i}$ by Lemma \ref{lem : cordinat}. 
    If $c_0=0$,  then $\inp{\lambda c,k_0}=0$ for every multiplier $\lambda\in\Mult(\cH)$.
    Otherwise, set $p(z)=\ol{c_{e_i}}-\frac{\ol{c_0}}{\norm{z_i}^2}z_i$. For $f\in\cH$ write $f(z)=\sum_\alpha f_\alpha z^\alpha$ and then $\inp{f,p}=c_{e_i}f_0-c_0f_{e_i}$.
    Thus,
    $$\inp{\lambda c,p}=c_{e_i}\lambda_0 c_0-c_0 \lambda_0 c_{e_i}=0$$
    for every multiplier $\lambda\in\Mult(\cH)$. 
    In either case, $\Mult(\cH)c$ is contained in the kernel of a nonzero bounded linear functional and therefore cannot be dense in $\cH$.
    
    Finally, we show $(4)\Rightarrow(1)$: Assume that $z_i\notin\Mult(\cH)$ for some $1\leq i\leq d$. Set $p(z)=1+z_i$ and define the linear functional $\rho(f)=\inp{f,p}$. We show that $\rho$ is a partially multiplicative functional.
    For $\lambda\in\Mult(\cH)$ and $f\in\cH$ write $\lambda(z)=\sum_\alpha \lambda_\alpha z^\alpha$ and $f(z)=\sum_\alpha f_\alpha z^\alpha$. Thus, $\rho(f) =f_0+f_{e_i}\norm{z_i}^2$ and by Lemma \ref{lem : cordinat} $\rho(\lambda)=\lambda_0+\lambda_{e_i}\norm{z_i}^2=\lambda_0$.
    Hence
    \begin{align*}
        \rho(\lambda f) & = \lambda_0 f_0+ (\lambda_{e_i}f_0+\lambda_0 f_{e_i})\norm{z_i}^2 \\
        & = \lambda_0(f_0+f_{e_i}\norm{z_i}^2)=\rho(\lambda)\rho(f). 
    \end{align*}
    Clearly, $\rho$ is not a point evaluation in the natural maximal domain, and therefore $\cH$ is not algebraically consistent.
\end{proof}
When $\cH$ is a weighted Hardy space with the complete Pick property, $\Mult(\cH)$ contains the monomials; see, for example, \cite[Section 4]{greene2002}. Then, by Theorem \ref{thm : graet}, $\cH$ is algebraically consistent on its natural maximal domain; 
this is a version of \cite[Corollary 3.4]{MR3687947} for weighted Hardy space with the complete Pick property. 
In fact, we now obtain a stronger result, assuming only the scalar Pick property.
\begin{corollary}
    Let $\cH$ be a weighted Hardy space with the scalar Pick property and let $\rho$ be a bounded non-zero functional on $\cH$. Then the following are equivalent.
    \begin{enumerate}
        \item $\rho(\phi g)=\rho(\phi)\rho(g)$ for all $\phi\in\Mult(\cH)$ and all $g\in\cH$;
        \item $\rho(fg)=\rho(f)\rho(g)$ whenever $f,g\in\cH$ such that $fg\in\cH$.
    \end{enumerate}
    
\end{corollary}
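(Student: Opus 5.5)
Both directions should reduce to the known structure of the natural maximal domain $X$ (Lemma \ref{lem : domof}) and to Theorem \ref{thm : graet}. The plan is to show that, under the scalar Pick property, each of (1) and (2) is equivalent to $\rho$ being a point evaluation at some point of $X$. Point evaluations clearly satisfy both conditions.

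For (2) $\Rightarrow$ (1) the argument is immediate. If $\phi\in\Mult(\cH)$ and $g\in\cH$, then $\phi g\in\cH$. Since $\cH$ is normalized at the origin, also $\phi=\phi\cdot\textbf{1}\in\cH$. Condition (2), applied with $f=\phi$, therefore gives $\rho(\phi g)=\rho(\phi)\rho(g)$. No Pick property is needed here.

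For (1) $\Rightarrow$ (2) I would show that $\cH$ is algebraically consistent on its natural maximal domain. Then a partially multiplicative $\rho$ is multiplicative, and condition (2) is exactly the definition of a multiplicative functional. By Theorem \ref{thm : graet}, it suffices to verify that every coordinate function $z_i$ lies in $\Mult(\cH)$. This is the main step, and I would argue by contradiction using the scalar Pick property.

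Assume $z_i\notin\Mult(\cH)$ for some $i$. By Lemma \ref{lem : cordinat}, every multiplier $\lambda$ has $\lambda_{e_i}=0$, so $\partial_{z_i}\lambda(0)=0$. Now take two points, $0$ and $\epsilon e_i$ with small $\epsilon>0$, together with target values $y_1=0$ and $y_2=s$. The Pick matrix is
$$\begin{pmatrix} 1 & 1\\ 1 & (1-|s|^2)G(\epsilon^2)\end{pmatrix},$$
using $K(0,\cdot)=1$. It is positive semidefinite if and only if $(1-|s|^2)G(\epsilon^2)\ge 1$, that is, $|s|^2\le 1-1/G(\epsilon^2)$. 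Since $G(\epsilon^2)=1+a_1\epsilon^2+O(\epsilon^4)$ with $a_1>0$, we may take $|s|\approx \sqrt{a_1}\,\epsilon$.

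The scalar Pick property then gives a multiplier $\lambda$ of norm at most $1$ with $\lambda(0)=0$ and $\lambda(\epsilon e_i)=s$. To reach a contradiction, I would restrict $\lambda$ to the slice $\zeta\mapsto\lambda(\zeta e_i)$. Because $\lambda_{e_i}=0$ and $\lambda(0)=0$, this slice function vanishes to second order at $0$. It is also bounded by $\norm{M_\lambda}\le 1$ on the slice disc of radius $r$ (or on all of $\C$ if $r=\infty$, in which case the slice is constant and hence zero). The Schwarz lemma, applied on the disc of radius $r$, then yields $|\lambda(\epsilon e_i)|\le \epsilon^2/r^2$. This contradicts $|s|\approx\sqrt{a_1}\,\epsilon$ once $\epsilon$ is small. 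The delicate points are the boundedness of $\lambda$ by the multiplier norm, which follows from the remark after Lemma \ref{lem : formw}, and choosing $\epsilon$ and $s$ consistently with this estimate. Since these come out cleanly, all coordinates are multipliers, and Theorem \ref{thm : graet} finishes the proof.
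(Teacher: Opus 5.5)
Your proof is correct, but it reaches Theorem \ref{thm : graet} through a different condition than the paper does.

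\textbf{The paper's route.} It verifies condition (2) of that theorem, density of $\Mult(\cH)$. It restricts to a ball $S=R\bB_d$ on which $G$ has no zeros, and asserts that restriction preserves multipliers and the scalar Pick property. It then invokes Hartz's result that $\psi_w=1-1/k_w|_S$ is a strictly contractive multiplier. Summing the Neumann series gives $k_w\in\Mult(\cH)$ for $w\in S$, and density follows because these kernels span a dense subspace.

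\textbf{Your route.} You verify condition (1) directly. If $z_i\notin\Mult(\cH)$, Lemma \ref{lem : cordinat} kills the $e_i$-coefficient of every multiplier. A single two-point Pick problem at $0$ and $\epsilon e_i$ then produces a contractive multiplier $\lambda$ with $\lambda(0)=0$. Taking $\av{s}^2=1-1/G(\epsilon^2)$, it satisfies $\av{\lambda(\epsilon e_i)}^2=1-1/G(\epsilon^2)\ge a_1\epsilon^2/G(\epsilon^2)$. This contradicts the Schwarz-lemma bound $\epsilon^4/r^4$ on the slice for small $\epsilon$, or Liouville's theorem when $r=\infty$. The bound $\av{\lambda}\le\norm{M_\lambda}$ and the identification $\partial_{z_i}\lambda(0)=\lambda_{e_i}$ are both justified, so the argument is complete.

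\textbf{What each buys.} Your argument is self-contained and elementary. It uses only one two-point interpolation problem with a node at the origin, needs no external result, and avoids the restriction to a zero-free ball. It also has a shorter logical chain: the paper's proof still passes through Lemma \ref{lem : cordinat}, via the implication (3)$\Rightarrow$(1) in the proof of Theorem \ref{thm : graet}, whereas you apply that lemma directly. The paper's approach gives extra structural information: the kernel functions $k_w$ near the origin are themselves multipliers. The price is reliance on an outside result about $1-1/k_w$.
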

\begin{proof}
    $(2)\Rightarrow (1)$ is immediate.
    For $(1)\Rightarrow (2)$, it is equivalent to prove that a weighted Hardy space in $d$ variables with the scalar Pick property is algebraically consistent on its natural maximal domain.
    
    Let $G$ be the generating function of $\cH$. There is $R\in(0,\infty)$ such that $G$ is holomorphic and never vanishes on $R^2\D$. 
    Set $S=R\bB_d$ and denote by $\cH|_{S}$ the restricted space.
    Since $\cH$ has the scalar Pick property, it is not hard to see that the restriction map $\Mult(\cH)\to\Mult(\cH|_{S})$, $\lambda\mapsto\lambda|_{S}$ is a bijection and $\cH|_{S}$ also has the scalar Pick property. 
    For $w\in S$, $\psi_w=1-\frac{1}{k_w|_{S}}$ is a strictly contractive multiplier of $\cH|_{S}$ by \cite[Proposition 3.1]{hartz2017isomorphism}. Thus, $k_w|_{S}=\sum_n\psi_w^n$ is also a multiplier of $\cH|_{S}$.
    By the identity theorem, we obtain that $k_w\in \Mult(\cH)$ for all $w\in S$ and the set $\set{k_w:w\in S}$ spans a dense subset of $\cH$, in particular, $\Mult(\cH)$ forms a dense subset. 
    Therefore, Theorem \ref{thm : graet} yields that $\cH$ is algebraically consistent on its natural maximal domain, and the proof is complete. 
\end{proof}

We now conclude another well-known property of the weighted Hardy spaces. 
A reproducing kernel $K$ on a ball $X\subseteq\C^d$ is said to be \emph{unitary invariant} if it satisfies $K(Uz, Uw)=K(z,w)$ for all $z,w\in X$ and all unitary maps $U\in\mathcal{U}(d)$.
\begin{corollary}\label{lem : unitary}
    Let $\cH$ be a weighted Hardy space in $d$ variables with generating function $G$.
    Then the kernel of $\cH$ is unitary invariant and $C_U:\cH\to\cH$ is a unitary for all $U\in\mathcal{U}(d)$.
\end{corollary}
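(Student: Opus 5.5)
The argument has three steps. First, unitary invariance of the kernel follows directly from the preceding proposition: $K(z,w)=G(\inp{z,w}_d)$, and every $U\in\mathcal{U}(d)$ satisfies $\inp{Uz,Uw}_d=\inp{z,w}_d$. Hence $K(Uz,Uw)=K(z,w)$ for all $z,w$ in the natural maximal domain $X$. This domain (from Lemma \ref{lem : domof}) is $\C^d$, $r\bB_d$ or $\ol{r\bB_d}$, and in each case $U$ maps it bijectively onto itself.

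Second, I will show that $C_U$ is a bounded operator on $\cH$ with $\norm{C_U}\le 1$, using Proposition \ref{Thm : bound} with $\lambda\equiv\textbf{1}$, $\varphi=U$ and $c=1$. By the first step,
$$K(x,y)-K(Ux,Uy)=0,$$
which is trivially a kernel on $X$. So condition (3) of that proposition holds, and $C_U:\cH\to\cH$ is well defined and contractive.

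Third, I will show $C_U$ is unitary. Since $U^{-1}=U^\ast\in\mathcal{U}(d)$, the second step applied to $U^{-1}$ shows that $C_{U^{-1}}$ is also a contraction. Moreover $C_UC_{U^{-1}}=C_{U^{-1}U}=I$ and $C_{U^{-1}}C_U=I$, so $C_U$ is an invertible contraction whose inverse is also a contraction, hence a surjective isometry, that is, a unitary.

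As an alternative check, the adjoint can be computed on kernels via Lemma \ref{lem : formw}: $C_U^\ast k_x=k_{Ux}$. Then
$$\inp{C_U^\ast k_x,C_U^\ast k_y}=K(Uy,Ux)=K(y,x)=\inp{k_x,k_y},$$
so $C_U^\ast$ is isometric on the dense span of the kernels. Its range contains every $k_{Ux}$ and is therefore dense, which again gives unitarity.

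No step is a genuine obstacle. The only point that needs care is confirming that $U$ preserves the chosen natural maximal domain, including the closed-ball case, so that $C_U$ really acts from $\cH$ to itself. This holds because $\av{Uz}=\av{z}$.
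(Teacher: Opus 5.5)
Your proof is correct and follows essentially the paper's route: the paper gets unitary invariance from $K(z,w)=G(\inp{z,w}_d)$ and then invokes Lemma \ref{lem : kernel relation}, which uses Proposition \ref{Thm : bound} for boundedness of $C_U$ and $C_{U^{-1}}$ and then the isometry $k_x\mapsto k_{Ux}$. That isometry argument is exactly your ``alternative check''. Your main step, that a contraction whose inverse is also a contraction is a surjective isometry, is an equally valid and slightly quicker way to finish.
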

For the proof, we need a simple lemma:
\begin{lemma}\label{lem : kernel relation}
    Let $\cH_i$ be a Hilbert function space with maximal domain $X_i$ and kernel function $K_i$, $i=1,2$.
    Suppose that $\varphi:X_1\to X_2$ is a bijection and $K_1(x,y)=K_2(\varphi(x),\varphi(y))$ for any $x,y\in X_1$.
    Then the map $C_\varphi:\cH_2\to\cH_1$ is unitary.
\end{lemma}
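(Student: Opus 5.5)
I would define $T$ on kernel functions and check that it is an isometry, rather than working with $C_\varphi$ directly. Set $T(k^1_x)=k^2_{\varphi(x)}$ for $x\in X_1$. For finitely many points $x_1,\dots,x_m\in X_1$ and scalars $\alpha_1,\dots,\alpha_m$ the hypothesis $K_1(x,y)=K_2(\varphi(x),\varphi(y))$ gives
\[
\Bigl\|\sum_j\alpha_jk^2_{\varphi(x_j)}\Bigr\|_{\cH_2}^2=\sum_{i,j}\ol{\alpha_i}\alpha_jK_2(\varphi(x_i),\varphi(x_j))=\sum_{i,j}\ol{\alpha_i}\alpha_jK_1(x_i,x_j)=\Bigl\|\sum_j\alpha_jk^1_{x_j}\Bigr\|_{\cH_1}^2 .
\]
Hence $T$ is well defined on $\spa\set{k^1_x:x\in X_1}$: if a combination of the $k^1_{x_j}$ vanishes, so does its image. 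It is also isometric there.

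Since $\spa\set{k^1_x}$ is dense in $\cH_1$, as noted in the preliminaries, $T$ extends to an isometry $\cH_1\to\cH_2$. Its range is closed and contains $\set{k^2_y:y\in X_2}$, because $\varphi$ is onto. That set spans a dense subspace of $\cH_2$, so $T$ is a unitary.

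It remains to identify $T^\ast$ with $C_\varphi$. For $f\in\cH_2$ and $x\in X_1$,
\[
(T^\ast f)(x)=\inp{T^\ast f,k^1_x}_{\cH_1}=\inp{f,k^2_{\varphi(x)}}_{\cH_2}=f(\varphi(x)).
\]
Thus $T^\ast=C_\varphi$, which is the same identification as in Lemma \ref{lem : formw} with $\lambda\equiv\textbf{1}$, and $C_\varphi=T^\ast$ is unitary.

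The only delicate point is the well-definedness of $T$ on the span of kernel functions, and the norm identity above settles it without any interpolation assumption. Maximality of the domains plays no role beyond fixing the sets on which the spaces live. Surjectivity of $\varphi$ is what makes $T$ onto; injectivity of $\varphi$ is not needed.

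For Corollary \ref{lem : unitary} I would apply this lemma with $\varphi=U\in\mathcal U(d)$ restricted to the natural maximal domain. A ball, open or closed, is preserved by $U$, so $U$ restricts to a bijection of it. Unitary invariance of the kernel follows from $K(Uz,Uw)=G(\inp{Uz,Uw}_d)=G(\inp{z,w}_d)=K(z,w)$.
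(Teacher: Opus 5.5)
Your proof is correct and follows the paper's approach: both identify $C_\varphi$ as the adjoint of the map $k^1_x\mapsto k^2_{\varphi(x)}$, which the kernel identity makes an isometry. The only difference is how surjectivity is obtained. The paper uses the inverse $C_{\varphi^{-1}}$, bounded by Proposition \ref{Thm : bound}. You use instead that the closed range of $T$ contains every $k^2_y$, which also spells out well-definedness on the span and shows that injectivity of $\varphi$ is not needed.
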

\begin{proof}
    By Theorem \ref{Thm : bound}, $C_\varphi$ and $C_{\varphi^{-1}}$ are bounded.
    Moreover, $C_\varphi$ is the adjoint of the map $k^1_x \mapsto k^2_{\varphi(x)}$, which, by assumption, is an isometry.
    Since $ C_{\varphi^{-1}}C_\varphi=I_{\cH_2}$, $C_\varphi^\ast$ is an onto isometry, i.e., a unitary map.
\end{proof}
\begin{proof}[\textbf{Proof of Corollary \ref{lem : unitary}}]
    Fix $U\in\mathcal{U}(d)$, so $G(\inp{Uz,Uw}_d)=G(\inp{z,w}_d)$ and $\cH$ is unitary invariant.
    The second part follows immediately from Lemma \ref{lem : kernel relation}.
\end{proof}

\begin{remark}
    From the corollary above and Proposition \ref{prop : isoalg}, we see that for a weighted Hardy space $\cH$ in $d$ variables, each $U\in\mathcal{U}(d)$ induces the isometric isomorphism $\Mult(\cH)\to\Mult(\cH)$ given by $\lambda\mapsto \lambda\circ U$. 
    Therefore, if $z_i\in\Mult(\cH)$ for some $1\le i\le d$, then $z_j\in\Mult(\cH)$ for all $1\le j\le d$, since changing coordinates in $\C^d$ is a unitary map.
\end{remark}

We close the section with some examples for weighted Hardy spaces:
\begin{example}\label{ex : space}
    \begin{enumerate}
        \item For a real number $s$, let $\cH_s$ be the weighted Hardy space in one variable with the generating function
        $$G_s(t)=\sum_{n=0}^\infty (n+1)^st^n.$$
        It is easy to see that $\D$ is the natural maximal domain for $\cH_s$ with $s\ge -1$, and $\ol{\D}$ is the natural maximal domain for $\cH_s$ with $s<-1$. Since $\sup_n \frac{n^s}{(n+1)^s}<\infty$, $z\in\Mult(\cH_s)$ for all $s$. 
        Then, by Theorem \ref{thm : graet}, $\cH_s$ is algebraically consistent on its natural maximal domain.
        
        Where $s=1$, $\cH_s$ is the \emph{Bergman space $A^2(\D)$}. 
        Where $s=0$, $\cH_s$ is the \emph{Hardy space $H^2(\D)$}.
        It is well known that the multiplier algebras of the Bergman and Hardy spaces are isometrically isomorphic to $H^\infty(\D)$; see, for example, \cite[Theorem 4.6]{Jim}.
        
        Where $s=-1$, $\cH_s$ is the \emph{Dirichlet space $\cD$}. 
        By \cite[Exercise 2.l.l2]{CowenCarlC}, its multiplier algebra is a proper subset of $H^\infty(\D)$.

        For $s\le 0$, the space $\cH_s$ has the complete Pick property \cite[Corollary 7.41]{Jim}, while the Bergman space $A^2(\D)$ does not have the Pick property \cite[Example 5.1]{Jim}.
        
        \item The \emph{Segal--Bargmann space $\cS$} is the weighted Hardy space in one variable with the generating function
        $$G(t)=\sum_{n=0}^\infty\frac{1}{n!}t^n=e^t.$$
        Since $G$ is entire, we observe that $\C$ is the natural maximal domain for $\cS$ and $\Mult(\cS)$ is just the constant functions.
      
        \item The \emph{Drury-Arverson space $H^2_d$} is the weighted Hardy space in $d$ variables with the generating function $G_0(t)=\frac{1}{1-t}$.
        For $d=1$, we obtain the Hardy space. For $d\ge 2$, the space of multipliers is a proper subset of $H^\infty(\bB_d)$; see Remark 8.9 and the paragraph following it in \cite{Jim}.  
        Nevertheless, those spaces have the complete Pick property and are hence algebraically consistent on their natural maximal domain $\bB_d$.
    \end{enumerate}
\end{example}

\section {Weighted isomorphisms between weighted Hardy spaces}
In view of Proposition \ref{prop : isoalg}, if $\Mult(\cH_1)$ and $\Mult(\cH_2)$ are not isomorphic via a composition operator, then $\cH_1$ and $\cH_2$ are not isomorphic via an RKHS isomorphism. 
However, studying the multiplier algebras is not enough to determine if two spaces are isomorphic via an RKHS isomorphism. 
A well-known example is the relation between the Hardy and Bergman spaces. Both have $H^\infty(\D)$ as their multiplier algebra and $C_{\on{id}}:\Mult(A^2(\D))\to\Mult(H^2(\D))$ is an isomorphism. 
However, we claim that:  
\begin{claim}
    $\cH_s$ is not weakly related to $A^2(\D)$ for all $s\le 0$.
\end{claim}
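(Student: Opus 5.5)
The plan is to assume, for contradiction, that $W_{\lambda,\varphi}:A^2(\D)\to\cH_s$ is invertible, so that $\cH_1=\cH_s$ and $\cH_2=A^2(\D)$. We apply Proposition \ref{prop : dim} with $d_1=d_2=1$. Its hypotheses hold for both spaces.
\begin{itemize}
\item By the Corollary after the generating-function proposition, both spaces are fully interpolating, hence $2$-interpolating.
\item Lemma \ref{lem : domof} and Example \ref{ex : space} give the natural maximal domains: $\D$ for $A^2(\D)$; $\D$ for $\cH_s$ when $-1\le s\le 0$; and $\ol{\D}$ for $\cH_s$ when $s<-1$.
\item Polynomials are dense in $\cH_s$, and $A^2(\D)$ contains $1$ and $z$.
\end{itemize}
The proposition then says that $\varphi:X_1\to\D$ is a holomorphic bijection, continuous on $X_1$, with $W_{\lambda,\varphi}^{-1}=W_{1/\lambda\circ\varphi^{-1},\varphi^{-1}}$. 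By Lemma \ref{lem : inj}, $\lambda$ never vanishes and is holomorphic.

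\textbf{Case $s<-1$.} Here $X_1=\ol{\D}$ is compact. A continuous bijection $\varphi:\ol{\D}\to\D$ would make $\D$ compact, which is impossible. So this case is immediate.

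\textbf{Case $-1\le s\le 0$.} Now $\varphi\in\Aut(\D)$ is a Möbius map. I would use the two kernel estimates that follow Lemma \ref{lem : formw}.
\begin{itemize}
\item Applied to $W_{\lambda,\varphi}$, it gives $|\lambda(x)|^2K_{A^2}(\varphi(x),\varphi(x))\le C\,K_s(x,x)$.
\item Applied to the inverse $W_{1/\lambda\circ\varphi^{-1},\varphi^{-1}}$ at $y=\varphi(x)$, it gives the reverse inequality up to a constant.
\end{itemize}
Together these yield
\[
|\lambda(x)|^2\asymp \frac{K_s(x,x)}{K_{A^2}(\varphi(x),\varphi(x))}=K_s(x,x)\,(1-|\varphi(x)|^2)^2 .
\]
For a Möbius map one has $1-|\varphi(x)|^2=(1-|a|^2)(1-|x|^2)/|1-\bar a x|^2$, which is comparable to $1-|x|^2$ with constants depending only on $a$. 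Also, since $(n+1)^s\le 1$ for $s\le 0$, we get $K_s(x,x)\le (1-|x|^2)^{-1}$. Hence
\[
|\lambda(x)|^2\le C'(1-|x|^2)\longrightarrow 0 \quad\text{as } |x|\to 1 .
\]
Then $\lambda$ extends continuously to $\ol{\D}$ with zero boundary values, so the maximum principle forces $\lambda\equiv0$. This contradicts $\lambda$ being non-vanishing.

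The only delicate point is checking that the hypotheses of Proposition \ref{prop : dim} hold, in particular that the correct maximal domains are used. Once that is settled, the kernel comparison is routine. Only the upper bound for $|\lambda|$ is needed, which comes from the inverse operator; the first inequality is not used.
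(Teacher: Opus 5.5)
Your proof is correct and follows the paper's skeleton. Proposition \ref{prop : dim} rules out $s<-1$ (you spell out the compactness argument the paper leaves implicit) and forces $\varphi\in\Aut(\D)$. The diagonal kernel estimate then gives $|\lambda(z)|^2\le C(1-|z|^2)$.

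There are two differences in detail.
\begin{enumerate}
\item The paper replaces $W_{\lambda,\varphi}$ by $M_\lambda=W_{\lambda,\varphi}C_{\varphi^{-1}}$, using that $C_{\varphi^{-1}}$ is bounded on $A^2(\D)$. You absorb the automorphism through the explicit identity for $1-|\varphi(x)|^2$. These are equivalent.
\item For the contradiction, the paper uses $1/\lambda=M_\lambda^{-1}(\textbf{1})\in A^2(\D)$ and integrates $1/(1-|z|^2)\le c/|\lambda(z)|^2$ over $\D$, so the left side diverges. You use the maximum principle directly.
\end{enumerate}
Your ending is simpler and shows slightly more. For $-1\le s\le 0$, every \emph{bounded} $W_{\lambda,\varphi}:A^2(\D)\to\cH_s$ with $\varphi\in\Aut(\D)$ is zero. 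Invertibility is needed only to force $\varphi$ to be an automorphism, not for the final step.

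One slip to fix: your closing remark has the two inequalities reversed. Evaluating $W_{\lambda,\varphi}^*$ at $k^1_x$ gives
\[
|\lambda(x)|^2K_{A^2}(\varphi(x),\varphi(x))\le \|W_{\lambda,\varphi}\|^2K_s(x,x).
\]
This is the \emph{upper} bound on $|\lambda|$ that you actually use. The inverse $W_{1/\lambda\circ\varphi^{-1},\varphi^{-1}}$ evaluated at $y=\varphi(x)$ yields only the lower bound. So it is the inequality from the inverse that is unused, not the first one. Since you state both, the argument itself is unaffected.
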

\begin{proof}
    We can represent the inner product of $A^2(\D)$ by
    $$\inp{f,g}_{A^2(\D)}=\frac{1}{\pi}\int_\D f(z)\ol{g(z)}dA(z)$$
    where $dA(z)$ is the Lebesgue area measure on the unit disk; see \cite[Example 2.2]{Jim}. 
    With this representation, it is easy to obtain that $C_\psi:A^2(\D)\to A^2(\D)$ is a well-defined isomorphism for all $\psi\in\on{Aut}(\D)$. 
    
    Fix $s\le 0$ and assume, for the sake of contradiction, that $\cH_s$ is weakly related to $A^2(\D)$ and
    $W_{\lambda,\varphi}: A^2(\D)\to \cH_s$ is invertible.
    By Proposition \ref{prop : dim}, we must have $-1\le s\le 0$ (because $\ol{\D}$ is the natural maximal domain for $\cH_s$ whenever $s<-1$) and $\varphi\in\on{Aut}(\D)$.
    Thus, $M_\lambda=W_{\lambda,\varphi}C_{\varphi^{-1}}: A^2(\D)\to\cH_s$ is invertible.
    Hence, for $c=\norm{ M_\lambda^\ast}^2$ we have 
    $$\av{\lambda(z)}^2G_1(\inp{z,z}_1)\le cG_s(\inp{z,z}_1)\le cG_0(\inp{z,z}_1)$$
    for all $z\in \D$. That is,
    \begin{gather*}
        \frac{\av{\lambda(z)}^2}{(1-\av{z^2})^2}\le \frac{c}{1-\av{z}^2} 
    \end{gather*}
    which is the same as 
    $$\frac{1}{1-\av{z}^2}\le c\frac{1}{\av{\lambda(z)}^2}$$
    for all $z\in \D$, where we have used that $\lambda$ is never vanishes (Lemma \ref{lem : inj}). 
    This leads us to a contradiction, since $\nicefrac{1}{\lambda}\in A^2(\D)$ and integrating the last inequality gives
    $$\int_\D \frac{dA(z)}{1-\av{z}^2} \le c \int_\D\frac{dA(z)}{\av{\lambda(z)}^2}.$$
    As the left-hand side diverges, we obtain the contradiction.
\end{proof}

This leads us to some natural questions: 
\begin{quest}
Let $\cH_i$ be a weighted Hardy space with the natural maximal domain $X_i\subseteq \C^{d_i}$, $i=1,2$.
When are $\cH_1$ and $\cH_2$ isomorphic via an RKHS isomorphism? 
When are they strongly related? And when is $\cH_1$ weakly related to $\cH_2$?
\end{quest}
We present an example that illustrates one of the difficulties of working with composition operators. The following example shows that we cannot always split a weighted composition operator into a ``natural" product of a composition operator and a multiplication operator.
\begin{example}
    We build an example of a composition operator $W_{\lambda,\varphi}$ which does not arise as the composition $M_\lambda C_\varphi$, where $\lambda\in\Mult(\cH_2,\cH_1)$ and $C_\varphi:\cH_2\to\cH_2$ or $\lambda\in\Mult(\cH_1)$ and $C_\varphi:\cH_2\to\cH_1$.
    
    To this end, we use an example from \cite{lefevre2025characterization}. Let $G(t)=\sum_{n=0}^\infty a_nt^n$ where $a_0=a_1=1$ and $a_n=k!$ when $k!<n\le (k+1)!$. Denote by $\cH$ the weighted Hardy space in one variable with $G$ as its generating function. 
    Notice that $\D$ is the natural maximal domain for $\cH$.
    Let $a\in \D\sm\set{0}$ and $\varphi_a\in\on{Aut}(\D)$ be the involution map.
    \cite[Theorem 1.1]{lefevre2025characterization} yields that $C_{\varphi_a}$ is not bounded operator on $\cH$.
    Let $\cK$ be the Hilbert function space on $\D$ with the kernel $K(x,y)=xG(\inp{\varphi_a(x),\varphi_a(y)}_1)\ol{y}$, so $\cK=\set{zf(\varphi_a(z)):f\in\cH}$, see \cite[Theorem 3.1]{kumari2025composition}.
    Notice that by Theorem \ref{Thm : bound}, $W_{z,\varphi_a}:\cH\to\cK$ is a bounded weighted composition operator.
    Assume that $W_{z,\varphi_a}=M_zC_{\varphi_a}$ where $M_z$ acts on either $\cH$ or $\cK$.
    Since $C_{\varphi_a}$ is not bounded operator on $\cH$, we must have that $C_{\varphi_a}:\cH\to\cK$.
    However, that means that $C_{\varphi_a}(\textbf{1})=\textbf{1}\in\cK$.
    If $\textbf{1}=zf(\varphi_a(z))$ for $f\in\cH$, then $\varphi_a(z)f(z)=1$ for all $z\in\D$. 
    This contradicts the fact that $f$ is holomorphic on $\D$. 
\end{example}

We continue our journey with the following easy conclusions:
\begin{proposition}\label{cor : sr}
     Let $\cH_i$ be a weighted Hardy space in $d_i$ variables with generating function  $G_i$ with radius of convergence $r_i^2\in(0,\infty]$, $i=1,2$. 
     Assume that $\cH_1$ is weakly related to $\cH_2$ via $W_{\lambda,\varphi}:\cH_2\to\cH_1$, then: 
    \begin{enumerate}
        \item If $d_1\ge d_2$, then $\cH_1$ and $\cH_2$ are isomorphic via an RKHS isomorphism and $d_1=d_2$.
        \item If $\cH_1$ and $\cH_2$ strongly related, then $\cH_1$ and $\cH_2$ are isomorphic via an RKHS isomorphism and $d_1=d_2$.
        \item If $\cH_1$ and $\cH_2$ strongly related, then $G_1$ is entire if and only if $G_2$ is entire.
        \item If $\cH_1$ and $\cH_2$ strongly related and $r_1<\infty$, then $r_2<\infty$ and $G_1(r_1^2)<\infty$ if and only if $G_2(r_2^2)<\infty$.
    \end{enumerate}
\end{proposition}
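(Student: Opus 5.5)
The plan is to reduce everything to Proposition \ref{prop : dim}, after checking that weighted Hardy spaces on their natural maximal domains satisfy its hypotheses. By Lemma \ref{lem : domof}, the natural maximal domain $X_i$ of $\cH_i$ is one of $\C^{d_i}$, $r_i\bB_{d_i}$ or $\ol{r_i\bB_{d_i}}$. In each case $\on{int}(X_i)$ is connected and dense in $X_i$. Every function in $\cH_i$ is holomorphic on the interior, as a locally uniform limit of polynomials via the estimate $\av{f(w)}\le\sqrt{G_i(\av{w}^2)}\norm{f}$. In the closed-ball case, continuity on $\ol{r_i\bB_{d_i}}$ follows from the same estimate, since $G_i(r_i^2)<\infty$ makes the partial sums of the monomial expansion converge uniformly on the closed ball. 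The polynomials are dense and contain the constants and coordinates. Finally, $\cH_i$ is fully interpolating, hence $2$-interpolating, by the corollary following the Remark on generating functions.

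For (1), apply Proposition \ref{prop : dim} to $W_{\lambda,\varphi}$ with $d_1\ge d_2$. This gives $d_1=d_2$, that $\varphi:X_1\to X_2$ is a bijection, and that $W_{\lambda,\varphi}^{-1}=W_{1/\lambda\circ\varphi^{-1},\varphi^{-1}}$. By Lemma \ref{lem : inj}, $\lambda$ never vanishes. Then $T=W_{\lambda,\varphi}^\ast$ is bijective and bounded, and by Lemma \ref{lem : formw} it satisfies $T(k^1_x)=\ol{\lambda(x)}k^2_{\varphi(x)}$, so it is an RKHS isomorphism.

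For (2), we have invertible operators $W_{\lambda,\varphi}:\cH_2\to\cH_1$ and $W_{\mu,\psi}:\cH_1\to\cH_2$. Either $d_1\ge d_2$ or $d_2\ge d_1$, so applying (1) to whichever operator has the right orientation gives an RKHS isomorphism and $d_1=d_2$. Note that the inverse of an RKHS isomorphism is again one, so the direction does not matter.

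For (3) and (4), I use the bijection $\varphi:X_1\to X_2$ from (2) together with the facts that $\varphi$ is holomorphic on the interior and continuous on $X_1$, and that $\varphi^{-1}$ has the same properties. The latter holds because (2) also applies with the roles of the spaces swapped, or alternatively because $\varphi^{-1}$ is the composition symbol of $W^{-1}$, and Lemma \ref{lem : inj} applies to it. The claims then become topological and complex-analytic statements about $\varphi$.

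(3): If $G_1$ is entire, then $X_1=\C^d$, and $\varphi$ is a homeomorphism onto $X_2$. If $X_2$ were a bounded ball, $\varphi$ would be a bounded entire map, hence constant by Liouville, contradicting injectivity. So $X_2=\C^d$, and by symmetry the equivalence follows.

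(4): If $r_1<\infty$, then $X_1$ is bounded, and $r_2=\infty$ is excluded by the reverse application of (3). For the remaining equivalence, I distinguish the compact domain $\ol{r\bB_d}$ from the non-compact domain $r\bB_d$. Since $\varphi$ is a continuous bijection with continuous inverse, it is a homeomorphism, and compactness of $X_1$ holds iff compactness of $X_2$ holds. Thus $G_1(r_1^2)<\infty$ iff $G_2(r_2^2)<\infty$.

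The main obstacle is the continuity needed for these topological arguments. Lemma \ref{lem : inj} gives continuity of $\varphi$ on $X_1$ only when functions in $\cH_1$ are continuous on $X_1$. For closed domains, this requires the uniform convergence argument on $\ol{r\bB_d}$ described above, which I would verify carefully first. If that fails, I would fall back on a Montel/Liouville-type argument on interiors only.
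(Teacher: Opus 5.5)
Your proposal is correct and follows the paper's proof. Part (1) comes from Proposition \ref{prop : dim}, (2) follows from (1), (3) is Liouville applied to the holomorphic bijection $\varphi:\C^d\to X_2$, and (4) uses the fact that compactness of $\ol{r_1\bB_d}$ is preserved under $\varphi$. Your explicit checks fill in details the paper leaves implicit: continuity on the closed ball via $G_i(r_i^2)<\infty$, continuity of $\varphi^{-1}$ via $W_{\lambda,\varphi}^{-1}$, and ruling out $r_2=\infty$ through (3).
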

\begin{proof}
    If $d_1\ge d_2$, by Proposition \ref{prop : dim} we obtain that $d_1=d_2$ and $W_{\lambda,\varphi}^{-1}$ is a weighted composition operator, which shows that $\cH_1$ and $\cH_2$ are isomorphic via an RKHS isomorphism. 
    
    (2) is immediate from (1). For the rest of the proof, we denote by $X_i$ the natural maximal domain for $\cH_i$, $i=1,2$. 
    
    For the third part assume that $G_1$ is entire, then by Proposition \ref{prop : dim} and Lemma \ref{lem : domof}, there is a bijection $\varphi:\C^{d_1}\to X_2\subseteq \C^{d_1}$ which is holomorphic on $\C^{d_1}$. 
    Since $X_2\in\set{r_2\bB_{d_1},\ol{r_2\bB_{d_1}}}$ where $r_2\in (0,\infty]$, by Liouville’s theorem, we must have $X_2=\C^{d_1}$ and $r_2=\infty$, which means that $G_2$ is entire too.

    For the last part, if $G_1(r_1^2)<\infty$ there is a homeomorphism $\varphi:\ol{r_1\bB_{d_1}}\to X_2$. Since  $\ol{r_1\bB_{d_1}}$ is compact we must have that $X_2=\ol{r_2\bB_{d_1}}$ with $r_2<\infty$ and then $G_2(r_2^2)<\infty$.
\end{proof}

Currently, it is not known whether a strong relation always implies the existence of an RKHS isomorphism between general $2$-interpolating Hilbert function spaces. 
Notice that Example \ref{Ex : 1} shows that a weak relation does not imply a strong relation in the setting of one of the spaces being a unitary invariant space. 
We conclude this section by showing that under a reasonable additional assumption, the existence of an invertible weighted composition operator implies the existence of an RKHS isomorphism.
\begin{proposition}\label{prop : new}
     Let $\cH_i$ be a weighted Hardy space with the natural maximal domain $X_i\subseteq \C^{d_i}$, $i=1,2$.
     Assume that $\cH_2$ is algebraically consistent on $X_2$.
     If $W_{\lambda,\varphi}:\cH_2\to\cH_1$ is invertible, then $\varphi:X_1\to X_2$ is a bijection and $W_{\lambda,\varphi}^{-1}$ is a weighted composition operator.
\end{proposition}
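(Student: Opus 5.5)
The plan is to mimic the proof of Proposition \ref{prop : dim}, but to replace the multiplicativity argument (which needed $d_1\ge d_2$ to get openness of $\varphi(\on{int}(X_1))$ and then the identity theorem) by an argument that only uses multipliers of $\cH_2$. The key point is that algebraic consistency of $\cH_2$ only requires checking \emph{partial} multiplicativity, and multipliers transfer along $W_{\lambda,\varphi}$ with no geometric information on $\varphi$.

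By Lemma \ref{lem : inj}, $\lambda$ never vanishes and $\varphi$ is injective. By Theorem \ref{thm : graet}, algebraic consistency gives that $\Mult(\cH_2)$ contains the coordinate functions, hence all polynomials, and is dense in $\cH_2$. So Proposition \ref{prop : algiso2} applies: $C_\varphi:\Mult(\cH_2)\to\Mult(\cH_1)$ is a well-defined isomorphism, and in particular $W_{\lambda,\varphi}M_\Phi=M_{\Phi\circ\varphi}W_{\lambda,\varphi}$ for every $\Phi\in\Mult(\cH_2)$.

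Next, fix $y\in X_2$. Since $W_{\lambda,\varphi}^\ast$ is onto, choose $h\in\cH_1$ with $W_{\lambda,\varphi}^\ast h=k^2_y$. For $\Phi\in\Mult(\cH_2)$ and $F\in\cH_2$ with $f=W_{\lambda,\varphi}F$,
$$\inp{(\Phi\circ\varphi) f,h}_{\cH_1}=\inp{W_{\lambda,\varphi}(\Phi F),h}_{\cH_1}=\Phi(y)F(y)=\Phi(y)\inp{f,h}_{\cH_1}.$$
Thus $\rho=\inp{\cdot,h}_{\cH_1}$ satisfies $\rho(\phi f)=\rho_2(\phi)\rho(f)$ for all $\phi\in\Mult(\cH_1)$, where $\rho_2(\phi)=\Phi(y)$ for the preimage $\Phi$ of $\phi$ under $C_\varphi$; this is well defined by the isomorphism. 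Taking $f=\lambda=W_{\lambda,\varphi}\textbf{1}$ shows $\rho(\lambda)=\inp{\textbf{1},k^2_y}=1\ne0$. Hence $\rho(\phi)\rho(\lambda)^{-1}\cdot\rho(\lambda)$ relations give that the functional $\sigma(\phi)=\rho(\phi\lambda)/\rho(\lambda)=\Phi(y)$ is a character of $\Mult(\cH_1)$ extending along cyclic vectors.

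The main obstacle is turning this into a point $x\in X_1$ with $\varphi(x)=y$, since $\cH_1$ need not be algebraically consistent. My first attempt avoids $\cH_1$-consistency entirely and works with coordinates. Set $\phi_j=z_j\circ\varphi=\varphi_j\in\Mult(\cH_1)$, with $\sigma(\varphi_j)=y_j$, and consider $g=\lambda\prod_j(\varphi_j-y_j)^{}$-type functions. The cleaner route is to show that $y\in\ol{\varphi(X_1)}$ and use continuity and compactness/maximality. Because $\{k^2_{\varphi(x)}\}$ spans a dense set (by the Remark after Proposition \ref{prop : comp}), any polynomial vanishing on $\varphi(X_1)$ vanishes identically. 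Applying this to $|\cdot|$-type estimates, I would show that if $y\notin\ol{\varphi(X_1)}$, then the multiplier $\Psi=\inp{\cdot,y}-\av{y}^2$ or a suitable polynomial is bounded away from $0$ on $\varphi(X_1)$. In that case $\Psi\circ\varphi$ is invertible in $\Mult(\cH_1)$, since a multiplier bounded below on a unitarily invariant space should be invertible (this needs a corona-type justification, and is the weak spot). Then $\sigma(\Psi\circ\varphi)=\Psi(y)=0$ contradicts invertibility. Once $\varphi$ is onto, Lemma \ref{lem : formw} applied to $(W_{\lambda,\varphi}^\ast)^{-1}$, as at the end of Proposition \ref{prop : dim}, gives $W_{\lambda,\varphi}^{-1}=W_{\nicefrac{1}{\lambda\circ\varphi^{-1}},\varphi^{-1}}$.
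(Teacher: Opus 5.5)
Your proposal has a genuine gap at the step you flag yourself: producing $x\in X_1$ with $\varphi(x)=y$. The corona-type detour does not close it, for three reasons.
\begin{enumerate}
\item You give no argument that a polynomial vanishing at $y$ and bounded away from $0$ on $\varphi(X_1)$ exists. For $d_2\ge 2$ your candidate $\inp{\cdot,y}-\av{y}^2$ vanishes on a whole complex hyperplane through $y$, and that hyperplane may meet $\varphi(X_1)$.
\item The claim that a multiplier bounded below is invertible in $\Mult(\cH_1)$ is a one-function corona statement. It is not available for general weighted Hardy spaces.
\item Even granting both, you would only get $y\in\ol{\varphi(X_1)}$. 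When $X_1$ is an open ball there is no compactness to pass from there to $y\in\varphi(X_1)$.
\end{enumerate}

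The missing idea is that your premise ``$\cH_1$ need not be algebraically consistent'' is false here. The second part of Proposition \ref{prop : algiso2}, which you already invoke, says $\lambda$ is cyclic for $\Mult(\cH_1)$. Indeed $\Mult(\cH_1)\lambda\supseteq W_{\lambda,\varphi}(\Mult(\cH_2))$, which is dense. Hence Theorem \ref{thm : graet}, (3)$\Rightarrow$(1)$\Rightarrow$(4), shows that $\cH_1$ is algebraically consistent on $X_1$ and that $\Mult(\cH_1)$ contains the polynomials.

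Your own identity $\rho(\phi f)=\Phi(y)\rho(f)$ then finishes the job once you normalize by $c=\rho(\textbf{1})=K(y)$, where $K=W_{\lambda,\varphi}^{-1}(\textbf{1})$. Since $\rho(\phi)=\Phi(y)c$, the functional $\rho/c$ is partially multiplicative whenever $c\neq 0$. It is therefore evaluation at some $x\in X_1$. Comparing $W_{\lambda,\varphi}^\ast k^1_x=\ol{\lambda(x)}k^2_{\varphi(x)}$ with $W^\ast_{\lambda,\varphi}h=k^2_y$ gives $\varphi(x)=y$.

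This is the paper's argument. It uses only points $y\notin K^{-1}(0)$ and observes that $K(\varphi(0))=1/\lambda(0)\neq 0$, so $\varphi(X_1)$ contains a neighbourhood of $\varphi(0)$. Injectivity of $\varphi$ then gives $d_2\le d_1$, and Proposition \ref{prop : dim} concludes.

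You can even bypass the dimension step. If $c=0$, then $\rho$ annihilates $\Mult(\cH_1)$, which contains the polynomials and is therefore dense; this contradicts $\rho(\lambda)=1$. So $K$ never vanishes, $\varphi$ is onto, and your closing appeal to Lemma \ref{lem : formw} applies.
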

    
\begin{proof}
    By Proposition \ref{prop : dim}, it is sufficient to show that $d_2\le d_1$.
    Since $\varphi$ is injective and continuous on $X_1$ (Lemma \ref{lem : inj}), it is sufficient to find an open subset $B$ of $X_2$ such that $B\subset \varphi(X_1)$. 
        
    For $\phi\in\Mult(\cH_1)$ and $g\in \cH_1$, there are some $F,G,H,K\in\cH_2$ such that
    $$W_{\lambda,\varphi}(F)=\phi,W_{\lambda,\varphi}(G)=g,W_{\lambda,\varphi}(H)=\phi g,W_{\lambda,\varphi}(K)=\textbf{1}.$$   
    By Theorem \ref{thm : graet}, $\Mult(\cH_2)$ form a dense subset of $\cH_2$ and then by Proposition \ref{prop : algiso2} there exists a multiplier $\Phi\in \Mult(\cH_2)$ such that $\Phi\circ\varphi=\phi$ and $\lambda$ is cyclic for $\Mult(\cH_1)$.
    Moreover, $\cH_1$ is algebraically consistent on $X_1$ by Theorem \ref{thm : graet}.
    Notice that $W_{\lambda,\varphi}(\Phi)=\lambda \phi$ and $\Phi\circ\varphi\cdot \lambda G\circ\varphi=\lambda H\circ\varphi$,
    so $\Phi\circ\varphi\cdot  G\circ\varphi= H\circ\varphi$.
    Since every function is determined by its values on $\varphi(X_1)$, we have $H=\Phi G$ on $X_2$.
    Notice that 
    $$W_{\lambda,\varphi}(\Phi K)=\lambda\Phi\circ\varphi K\circ\varphi=\phi=W_{\lambda,\varphi}(F).$$
    By injectivity, $F=\Phi K$.
    Fix $y\in X_2\setminus K^{-1}(0)$ and set $c=K(y)$. 
    As $W_{\lambda,\varphi}^\ast$ is onto, there is some $h \in\cH_1$ such that $W_{\lambda,\varphi}^\ast(h)=k_y^2$.
    We have
    \begin{align*}
        c\inp{\phi g,h}_{\cH_1}
        & =c\inp{W_{\lambda,\varphi}(H),h}_{\cH_1}
        =c\inp{H,k_y^2}_{\cH_2} \\
        & =K(y)H(y)=K(y)\Phi(y)G(y)= F(y)G(y) \\
        & =\inp{F,k_y^2}_{\cH_2} \inp{G,k_y^2}_{\cH_2} 
        =\inp{\phi,h}_{\cH_1}\inp{ g,h}_{\cH_1}.
    \end{align*}
   
    As usual, set $k=\frac{1}{\ol{c}}h$ and obtain by the observation that $\cH_1$ is algebraically consistent that $k=k_x^1$ for some $x \in X_1$ and $\varphi(x)=y$.
    Since $K$ is continuous on $X_2$, there is an open neighborhood $B\subset X_2$ of $\varphi(0)$ so that $K$ does not vanish on $B$. Thus, $B$ lies in the image of $\varphi$, and the proof is complete.
\end{proof}
It is not known whether we can omit the assumption that $\cH_2$ is algebraically consistent without any further assumptions.

\section{Classification of weighted Hardy spaces}
In \cite{Gilad}, the authors classified the weighted Hardy spaces in one variable up to RKHS isomorphism whose adjoint is an invertible composition operator, i.e., a weighted composition operator with a constant multiplier symbol. 
Our goal is to solve this problem even when the multiplier symbol need not be a constant, and for weighted Hardy spaces on arbitrary balls. 
In this section, we prove Theorems \ref{thm : big1} and \ref{thm : big2}, along with several complementary results.
\begin{theorem}\label{thm : big1}
    Let $\cH_1$ be weighted Hardy space with the generating function $G_1(t)=\sum_{n=0}^\infty a_nt^n$ and $\cH_2$ be weighted Hardy space with the generating function $G_2(t)=\sum_{n=0}^\infty b_nt^n$. 
    Assume that $G_1$ is not entire.
    If the spaces are strongly related, then there exist $C,c,\gamma\in (0,\infty)$ such that $c\le \frac{b_n}{a_n}\gamma^n\le C$ for all $n$. 
\end{theorem}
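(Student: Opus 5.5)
By Proposition \ref{cor : sr}(2), strong relation gives an RKHS isomorphism with $d_1=d_2=:d$. Proposition \ref{prop : dim} then says it is implemented by an invertible $W_{\lambda,\varphi}:\cH_2\to\cH_1$ with $\varphi:X_1\to X_2$ a bijection that is holomorphic on the interior, and with inverse $W_{\nicefrac{1}{\lambda\circ\varphi^{-1}},\varphi^{-1}}$. Since $G_1$ is not entire, Proposition \ref{cor : sr}(3),(4) shows $G_2$ is not entire either, so $r_1,r_2<\infty$, and both balls are of the same type (both open or both closed). Restricted to the interiors, $\varphi$ is a biholomorphism $r_1\bB_d\to r_2\bB_d$. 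After rescaling by $r_2/r_1$ it becomes an automorphism of the ball, so $\varphi=U\circ\psi_a\circ(\nicefrac{r_1}{r_2})$ up to scaling, with $U$ unitary and $\psi_a$ a Möbius involution, where $a=\varphi^{-1}(0)$ up to scaling.

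I would first reduce to the case $\varphi(0)=0$. Precompose with the composition operator coming from the automorphism that moves $\varphi^{-1}(0)$ to $0$. Instead of arguing that this automorphism is bounded on $\cH_1$ (it need not be, as the example from \cite{lefevre2025characterization} shows), I would work directly with the kernel inequalities. Write $T=W_{\lambda,\varphi}$. The bounds $\|T^*\|$ and $\|(T^*)^{-1}\|$, together with Lemma \ref{lem : formw} and Theorem \ref{Thm : bound}, give that
$$C^2K_1(x,y)-\ol{\lambda(y)}\lambda(x)K_2(\varphi(x),\varphi(y)) \quad\text{and}\quad \ol{\lambda(y)}\lambda(x)K_2(\varphi(x),\varphi(y))-c^2K_1(x,y)$$
are both kernels on $X_1$. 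The cleanest route is a "Cartan averaging" trick. Let $x_0=\varphi^{-1}(0)$. I would show that the map $f\mapsto \frac{f\circ\varphi}{\cdot}$ is not actually needed, and instead evaluate the positivity at the symmetric point: the vector $T^*k^1_{x_0}=\ol{\lambda(x_0)}k^2_0=\ol{\lambda(x_0)}\mathbf 1$.

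The main step is then to compare the norms of monomials. Define the unitary-averaged operator $S=\int_{\mathcal U(d)} C_{U}^{(1)}\,T\,C_{U'}^{(2)}\,dU$, using Corollary \ref{lem : unitary} to realise the unitaries, so that the averaged symbol respects the grading. A simpler alternative is to look at the action of $T^*$ on homogeneous components. By Lemma \ref{lem : formw}, $T^*k^1_x=\ol{\lambda(x)}k^2_{\varphi(x)}$. Expanding both sides in powers of $\ol{x}$ along a fixed complex line through $x_0$ (in the case $x_0=0$, $\varphi$ is linear, namely a scaled unitary $\gamma' U$ with $\gamma'=r_2/r_1$), we get $\ol{\lambda(x)}G_2(\gamma'^2\langle z,x\rangle)$. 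Then, holomorphically in $\ol x$, the degree-$n$ part gives $T^*(a_n\langle\cdot,x\rangle^n)=\sum_j \ol{\lambda_j(x)}\,b_{n-j}\gamma'^{2(n-j)}\langle \cdot,Ux\rangle^{n-j}$.

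To get two-sided bounds on the $n$-th component, I would use invertibility together with the graded (triangular) structure, with $\lambda(0)\neq0$ as the diagonal term. Projecting onto degree $n$ in the $\cH_2$ variable and comparing norms with $\|\langle\cdot,x\rangle^n\|^2_{\cH_i}$ (which equal $|x|^{2n}/a_n$ and $|x|^{2n}/b_n$ up to the unitary factor) gives $|\lambda(0)|^2 b_n^2\gamma'^{4n}/b_n\le C^2a_n^2/a_n$. The reverse bound comes from the inverse operator, which has symbol $1/\lambda\circ\varphi^{-1}$. Together these give $c\le \frac{b_n}{a_n}\gamma^n\le C$ with $\gamma=\gamma'^{2}$. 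For the projection argument to be legitimate, I would replace the degree-$n$ extraction by integration over the circle action $x\mapsto e^{i\theta}x$.

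The main obstacle is the case $x_0\ne0$, where $\varphi$ does not preserve degrees. Here I would use the observation that the Möbius part and the weight can be absorbed. Consider instead $T^*$ applied to $k^1_{x}$ with $x$ near $\partial X_1$, together with the growth comparison $|\lambda(x)|^2G_2(|\varphi(x)|^2)\asymp G_1(|x|^2)$. Combined with $\lambda\in H^\infty$ (from $T\mathbf 1$ and the inverse), this gives coefficient-level comparability only after a Tauberian-type argument. So I would first try to construct the bounded degree-filtration-compatible operator by composing with the inverse operator, at the point $\varphi^{-1}(0)$, to force $x_0=0$.
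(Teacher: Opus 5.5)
Your preliminary reductions are fine: an invertible $W_{\lambda,\varphi}:\cH_2\to\cH_1$ with $d_1=d_2$, both radii finite, and $\varphi$ a biholomorphism of the interiors. In the special case $\varphi(0)=0$, your graded argument is also a valid alternative to the paper's. Cartan gives $\varphi(x)=\gamma' Ux$, and $a_n\inp{\cdot,x}^n=\frac{1}{2\pi}\int_0^{2\pi}e^{in\theta}k^1_{e^{i\theta}x}\,d\theta$. Projecting $T^\ast(a_n\inp{\cdot,x}^n)$ onto the degree-$n$ homogeneous polynomials of $\cH_2$ and using the mean value property of $\lambda$ gives $\ol{\lambda(0)}\,b_n\gamma'^{n}\inp{\cdot,Ux}^n$, hence $\av{\lambda(0)}^2 b_n\gamma'^{2n}\le \norm{T}^2 a_n$. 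The exponent is $2n$, not $4n$, which is what makes it consistent with your final $\gamma=\gamma'^2$. The inverse operator gives the other bound. The paper instead rescales $G_2$ so that $\varphi$ becomes a unitary and strips it off to get an invertible multiplication operator $M_\lambda$. It then applies Proposition \ref{prop : end}, which circle-averages $\lambda$ on the function side and uses weak compactness, to get bounded inclusions in both directions. It is the same circle-action idea, applied on the kernel side in your version.

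The genuine gap is the case $\varphi(0)\neq 0$, which you never resolve.
\begin{itemize}
\item The averaged operator $S$ is not defined (what is $U'$?).
\item The fallback cannot work, because diagonal estimates never yield coefficientwise comparability. Take $a_n=1$ for all $n$, $b_n=n$ when $n$ is a power of $2$, and $b_n=1$ otherwise. Then $G_1\le G_2\le 3G_1$ on $[0,1)$, so both diagonal inequalities hold with $\lambda\equiv 1$ and $\varphi=\mathrm{id}$, yet no $c,C,\gamma$ exist. So no Tauberian argument can extract the conclusion from $\av{\lambda(x)}^2G_2(\av{\varphi(x)}^2)\asymp G_1(\av{x}^2)$.
\item Also, $\lambda\in H^\infty$ does not follow from $T\mathbf{1}=\lambda$; boundedness of the multiplier symbol is automatic only for multiplication operators.
\end{itemize}

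The missing idea is the disc trick of Proposition \ref{lem : Orr}. You rightly note that Möbius composition operators may be unbounded on $\cH_1$, but you may compose $T$, $T^{-1}$ and the unitaries $C_U$ (Corollary \ref{lem : unitary}), all of which are bounded. The operator $TC_VT^{-1}C_UT:\cH_2\to\cH_1$ is an invertible weighted composition operator with composition symbol $\varphi\circ U\circ\varphi^{-1}\circ V\circ\varphi$. This symbol fixes $0$ exactly when $U\varphi^{-1}(V\varphi(0))=\varphi^{-1}(0)$, and such $U,V$ exist:
\begin{itemize}
\item As $V$ ranges over $\mathcal{U}(d)$, the point $V\varphi(0)$ covers the sphere $S$ of radius $\av{\varphi(0)}$.
\item $\varphi^{-1}(S)$ is connected and contains $0$ (take $V=I$).
\item $\varphi^{-1}(S)$ is the boundary of a relatively compact neighbourhood of $\varphi^{-1}(0)$ in $\on{int}(X_1)$. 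Hence the ray through $\varphi^{-1}(0)$ meets it at a point of norm at least $\av{\varphi^{-1}(0)}$.
\item By connectedness and the intermediate value theorem, $\varphi^{-1}(S)$ contains a point of norm exactly $\av{\varphi^{-1}(0)}$, and a unitary $U$ moves that point to $\varphi^{-1}(0)$.
\end{itemize}
Only after this reduction does your degree-$n$ comparison apply.
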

\begin{theorem}\label{thm : big2}
   Let $\cH_1$ be a weighted Hardy space in one variable with the generating function $G_1(t)=\sum_{n=0}^\infty a_nt^n$ and $\cH_2$ be a weighted Hardy space with the generating function $G_2(t)=\sum_{n=0}^\infty b_nt^n$. 
   If the spaces are strongly related, then there exist $C,c,\gamma\in (0,\infty)$ such that $c\le \frac{b_n}{a_n}\gamma^n\le C$ for all $n$. 
\end{theorem}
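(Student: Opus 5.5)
By Proposition \ref{cor : sr}(2), strong relatedness already gives an RKHS isomorphism and forces $d_1=d_2=1$. So there is an invertible $W_{\lambda,\varphi}:\cH_2\to\cH_1$. By Proposition \ref{prop : dim}, $\varphi:X_1\to X_2$ is a holomorphic bijection, $\lambda$ never vanishes, and $W_{\lambda,\varphi}^{-1}=W_{1/\lambda\circ\varphi^{-1},\varphi^{-1}}$. I split according to whether $G_1$ is entire.

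If $G_1$ is not entire, the claim is exactly Theorem \ref{thm : big1}, which I will assume at this point (its one-variable case is all that is needed here). So the new content is the case where $G_1$ is entire. Then Proposition \ref{cor : sr}(3) gives that $G_2$ is entire as well, and Lemma \ref{lem : domof} gives $X_1=X_2=\C$. An injective entire map of $\C$ onto $\C$ is affine, so $\varphi(z)=az+b$ with $a\neq0$. Moreover $\lambda=W_{\lambda,\varphi}(\textbf{1})$ is entire, zero-free, and $1/\lambda\circ\varphi^{-1}=W_{\lambda,\varphi}^{-1}(\textbf{1})\in\cH_2$.

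Applying Proposition \ref{Thm : bound} to $W_{\lambda,\varphi}$ and to its inverse gives two-sided kernel inequalities. There are constants $0<c\le C$ such that both
$$C\,K_1(x,y)-\lambda(x)\ol{\lambda(y)}\,G_2\big(\varphi(x)\ol{\varphi(y)}\big) \quad\text{and}\quad \lambda(x)\ol{\lambda(y)}\,G_2\big(\varphi(x)\ol{\varphi(y)}\big)-c\,K_1(x,y)$$
are kernels on $\C$. Equivalently, $f\mapsto f/\lambda$ is an isomorphism from $\cH_1$ onto the space $\cH_2'=C_\varphi\cH_2$, whose kernel is $G_2\big((ax+b)\ol{(ay+b)}\big)$.

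The goal is to reduce to the model situation $b=0$ with $\lambda$ constant. In that case $W_{\lambda,\varphi}=\lambda C_{az}$ maps $z^n\mapsto\lambda a^nz^n$, and the norms satisfy $\norm{z^n}_{\cH_1}^2=1/a_n$ and $\norm{z^n}_{\cH_2}^2=1/b_n$. Two-sided boundedness on the orthogonal monomials then gives $c'\le \frac{b_n}{a_n}|a|^{2n}\le C'$, so the theorem holds with $\gamma=|a|^2$.

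To reach the model, I would first try a rotation-averaging argument. By Corollary \ref{lem : unitary}, $C_u$ is unitary on both spaces for every $|u|=1$. Consider
$$D=\int_{\mathbb T}C_{\bar u}\,W_{\lambda,\varphi}\,C_u\,dm(u).$$
This operator is diagonal in the monomial basis, and it picks out the term $\lambda_0 a^n z^n$ coming from the degree-preserving part of $\lambda(z)(az+b)^n$. The difficulty is that $D$ is only bounded; lower bounds require invertibility, which averaging destroys. So, in parallel, I would compare diagonals: setting $x=y$ gives
$$|\lambda(x)|^2G_2\big(|ax+b|^2\big)\asymp G_1(|x|^2),$$
and I would combine this with the corresponding estimate for the inverse operator. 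Writing $\lambda=e^h$, the aim is to show that $\Re h$ is bounded above and below. That would make $h$ constant by Liouville, after which an additional argument must exclude $b\neq0$ or absorb the translation. This is the step I expect to be the main obstacle.

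If the growth comparison fails, my fallback is to test the two kernel inequalities against the finite-rank families $\sum_{j\le n}\alpha_j z^j$. The truncated Gram matrices should then yield the coefficient ratio directly, in the same way Theorem \ref{thm : big1} presumably does on a disc.
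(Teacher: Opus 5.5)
\paragraph{Gap: no reduction to $\varphi(0)=0$.}
Your non-entire case (via Theorem \ref{thm : big1}) is fine, and so is your final monomial computation for $W=\lambda C_{az}$ with $\lambda$ constant. The entire case, however, has a genuine gap. You never reduce to $\varphi(0)=0$, and the order you propose cannot work: first show $\Re h$ is bounded, then deal with $b$. The reason is that when $b\neq 0$ the multiplier symbol can be genuinely unbounded.

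On the Segal--Bargmann space $\cS$, with kernel $e^{z\ol{w}}$, the operator $f\mapsto e^{\ol{b}z-\av{b}^2/2}f(z-b)$ is unitary. To check this, note that $\lambda(z)\ol{\lambda(w)}e^{(z-b)\ol{(w-b)}}=e^{z\ol{w}}$, and the inverse is the same operator with $-b$. So with $\cH_1=\cH_2=\cS$ and $\varphi(z)=z-b$ one has $\Re h(z)=\Re(\ol{b}z)-\av{b}^2/2$, which is unbounded above and below. Hence $b\neq0$ cannot be excluded, and boundedness of $\Re h$ cannot be proved before the translation is removed.

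Your averaging formula also holds only for $b=0$. In general the degree-$n$ part of $\lambda(z)(az+b)^n$ is $\sum_{j=0}^n\lambda_j\binom{n}{j}a^{n-j}b^jz^n$, not $\lambda_0a^nz^n$. The Gram-matrix fallback runs into the same obstruction, since $W$ is not diagonal on monomials unless $\varphi(0)=0$.

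\paragraph{The missing idea: the disc trick.}
What you need is Proposition \ref{lem : Orr}. Combining $W=W_{\lambda,\varphi}$, $W^{-1}$ and rotations produces an invertible weighted composition operator $\cH_2\to\cH_1$ whose composition symbol fixes the origin. Concretely, for $\varphi(z)=az+b$ the operator $WC_VW^{-1}C_UW$ has symbol $\varphi\circ U\circ\varphi^{-1}\circ V\circ\varphi(z)=UVaz+b(UV-U+1)$. Choosing $U=V=e^{i\pi/3}$ kills the constant term.

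\paragraph{Finishing once $\varphi(z)=az$.}
The paper then shows $\lambda$ is constant (Lemma \ref{prop : const}). The diagonal kernel inequalities make $\av{\lambda}^2$ comparable to the radial function $G_1(\av{z}^2)/G_2(\av{a}^2\av{z}^2)$. So $\av{\lambda}$ varies by at most a fixed factor on each circle. The maximum modulus principle for the zero-free $1/\lambda$ then bounds $\lambda$, and Liouville makes it constant.

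Alternatively, your own averaging idea now works and even bypasses Lemma \ref{prop : const}. With $\varphi(z)=az$ one gets $\int_{\av{u}=1}C_{\ol{u}}WC_u\,dm(u)=\lambda(0)C_{az}$. Averaging $W^{-1}=W_{\delta,z/a}$ separately gives $\delta(0)C_{z/a}$, with $\delta(0)=1/\lambda(0)\neq0$. Both averages are bounded, and that alone yields the two one-sided estimates on $\norm{z^n}$; the average never needs to be invertible. This is the same mechanism as Proposition \ref{prop : end} applied to $M_\lambda$ and $M_{1/\lambda}$ in the proof of Theorem \ref{thm : big1}.
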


The following proposition is a crucial step toward our goals.
 Its proof uses an adaptation of the well-known ``disc trick'', originally introduced by Solel and Shalit in \cite[Section 11]{Disk}, and was later used in \cite{davidson2011isomorphism, hartz2017isomorphism, Gilad} to study various isomorphism problems.
\begin{proposition}\label{lem : Orr}
     Let $\cH_1$ and $\cH_2$ be weighted Hardy spaces in $d$ variables.
     Assume that $W_{\lambda,\varphi}:\cH_2\to\cH_1$ is invertible; then there exists an invertible weighted composition operator with a composition symbol that fixes the origin.  
\end{proposition}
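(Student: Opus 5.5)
The plan is to show that the set of points $\psi(0)$, where $\psi$ runs over composition symbols of invertible weighted composition operators $\cH_2\to\cH_2$, is all of $\on{int}(X_2)$, and then to pre-compose $W_{\lambda,\varphi}$ with one of these operators.

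\emph{Setup.} Let $X_i$ be the natural maximal domains given by Lemma \ref{lem : domof}. By Proposition \ref{prop : dim}, $\varphi:X_1\to X_2$ is a bijection, holomorphic on the interior, and $W_{\lambda,\varphi}^{-1}=W_{\nicefrac{1}{\lambda\circ\varphi^{-1}},\varphi^{-1}}$. Set $b=\varphi^{-1}(0)\in X_1$. Since $\varphi$ is an open map on $\on{int}(X_1)$, I expect $b$ to lie in the interior. If $b=0$ there is nothing to prove, so assume $b\neq0$.

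Compositions of weighted composition operators are again weighted composition operators: one checks directly that $W_{\mu,\sigma}W_{\nu,\tau}=W_{\mu\cdot(\nu\circ\sigma),\,\tau\circ\sigma}$. By Corollary \ref{lem : unitary}, each $C_U$ with $U\in\mathcal U(d)$ is unitary on both $\cH_1$ and $\cH_2$. This is the ``disc trick'' input: the symmetry group of each space contains $\mathcal U(d)$, and in particular the circle of scalar rotations.

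\emph{The group on $\cH_2$.} Let $\mathcal G$ be the group of invertible operators on $\cH_2$ generated by
\begin{itemize}
\item all $C_U$, $U\in\mathcal U(d)$, and
\item all products $A_U=W_{\lambda,\varphi}^{-1}C_UW_{\lambda,\varphi}$.
\end{itemize}
Every element of $\mathcal G$ is an invertible weighted composition operator on $\cH_2$ whose symbol is a biholomorphism of $X_2$. The symbol of $A_U$ is $h_U=\varphi\circ U\circ\varphi^{-1}$, so $h_U(0)=\varphi(Ub)$. Let $\Omega=\{\psi(0)\}$, where $\psi$ ranges over the symbols of elements of $\mathcal G$. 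The aim is to show $0\in\varphi^{-1}$-image terms: precisely, to find $A\in\mathcal G$ with symbol $\psi$ satisfying $\psi(0)=b$. Then $W_{\lambda,\varphi}A$ is invertible with symbol $\psi\circ\varphi$, and by the composition rule its symbol at the origin is $\varphi(\psi(0))=\varphi(b)=0$ after relabelling the order correctly; equivalently one can use $AW_{\lambda,\varphi}$, whichever order makes the symbol send $0\mapsto0$.

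\emph{Showing $\Omega$ is everything.} The set $\Omega$ is $\mathcal U(d)$-invariant, so it is determined by the set of radii $S=\{|w|:w\in\Omega\}$. It contains $0$, and if $h$ is a symbol in $\mathcal G$ then so is $h^{-1}$.

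First, $S$ contains a nontrivial interval $[0,\rho]$. The map $U\mapsto\varphi(Ub)$ is continuous on the connected group $\mathcal U(d)$. At $U=I$ it gives $\varphi(b)=0$, and I expect it to give a nonzero point for some other $U$, since otherwise $\varphi$ would vanish on the whole sphere of radius $|b|$, contradicting injectivity.

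Second, $S$ is closed under an expansion step. Given $w\in\Omega$ with $|w|=s$ and a symbol $h$ with $h(0)=w'$, the composite symbols $h\circ V$ for $V\in\mathcal U(d)$ send $0$ to $h(V w)$. As $V$ varies, these points sweep a connected set of radii. When $X_2$ is a ball $r\bB_d$ or $\ol{r\bB_d}$, $\varphi$ is a biholomorphism between balls, so it is a rescaled ball automorphism and the $h_U$ are Möbius maps. In that case the pseudo-hyperbolic radius adds up under such compositions, so iterating reaches every radius $<r$. This gives $\Omega=\on{int}(X_2)$.

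\emph{Main obstacle.} I expect the hard step to be this surjectivity of the orbit, uniformly across all cases of Lemma \ref{lem : domof}.
\begin{itemize}
\item Ball case: I would rely on the explicit structure of $\on{Aut}(\bB_d)$, namely that $\varphi$ is a dilation composed with $U\circ\varphi_a$, and on the fact that a subgroup containing $\mathcal U(d)$ and one non-unitary automorphism acts transitively.
\item Closed-ball case: the same argument applies, using that $\varphi$ maps the boundary to the boundary.
\item Entire case, $X_2=\C^d$: biholomorphisms are far less rigid. I would first try to show that $\varphi$ must be affine. The approach would be to use boundedness of $W_{\lambda,\varphi}$ via the kernel inequality $|\lambda(z)|^2G_2(|\varphi(z)|^2)\le\norm{W^*}^2G_1(|z|^2)$ together with growth considerations. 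If that works, the orbit argument reduces to translations combined with $\mathcal U(d)$.
\end{itemize}
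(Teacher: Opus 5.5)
Your proposal has a genuine gap, and it comes from aiming at more than is needed. You try to prove that the orbit $\Omega$ of the origin under the symbols of $\mathcal G$ is all of $\on{int}(X_2)$. The ``expansion step'' you use for this is justified only when $X_2$ is a ball of finite radius, through the M\"obius structure of $\Aut(\bB_d)$; even there, it is the hyperbolic distance, not the pseudo-hyperbolic one, that adds along geodesics. For entire generating functions, $X_1=X_2=\C^d$, the plan rests on showing that $\varphi$ is affine, which you do not do. For $d\ge 2$ this is exactly the kind of rigidity the paper does not have (it notes that $\Aut(\C^d)$ is wild), yet the proposition is asserted for all weighted Hardy spaces in $d$ variables. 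There is also a bookkeeping slip: ``$\psi(0)=b$'' mixes domains, since $b\in X_1$ while $\psi$ is a self-map of $X_2$. Because $W_{\lambda,\varphi}A^{-1}$ has symbol $\psi^{-1}\circ\varphi$ when $A\in\mathcal G$ has symbol $\psi$, the correct target is an $A$ whose symbol sends $0$ to $\varphi(0)$.

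The missing observation is that transitivity is unnecessary: you only need $\varphi(0)\in\Omega$, and your first step almost gives it. You already have $\varphi(Ub)=h_U(0)\in\Omega$ for all $U\in\mathcal U(d)$. The radii of these points fill $[0,\rho]$ with $\rho=\max_U|\varphi(Ub)|$, and $\Omega$ is rotation invariant, so it suffices to show $\rho\ge|\varphi(0)|$. This is the maximum principle. Put $\zeta=\varphi(0)/|\varphi(0)|$; the function $g(t)=\langle\varphi(tb),\zeta\rangle$ is holomorphic for $|t|<1$ and continuous for $|t|\le1$. Hence $|\varphi(0)|=g(0)$ is the mean of $g(e^{i\theta})$, which forces $|\varphi(e^{i\theta}b)|\ge|\varphi(0)|$ for some $\theta$. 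Now choose $U$ with $|\varphi(Ub)|=|\varphi(0)|$ and $V$ with $V\varphi(Ub)=\varphi(0)$. The element $A=A_UC_V$ has symbol $V\circ\varphi\circ U\circ\varphi^{-1}$, which maps $0$ to $\varphi(0)$. Therefore $W_{\lambda,\varphi}A^{-1}=W_{\lambda,\varphi}C_{V^{-1}}W_{\lambda,\varphi}^{-1}C_{U^{-1}}W_{\lambda,\varphi}$ is invertible with symbol fixing the origin. This is precisely the operator in the paper's proof. The paper argues on the other side of $\varphi$: it notes that $\varphi^{-1}$ of the sphere of radius $|\varphi(0)|$ is a connected set through $0$ enclosing $\varphi^{-1}(0)$, so it meets the sphere of radius $|\varphi^{-1}(0)|$ and can be rotated onto $\varphi^{-1}(0)$. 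Either way only continuity and holomorphy of $\varphi$ enter, so nothing about the automorphism group of the domain is needed, and the case $X_2=\C^d$ is handled uniformly.
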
 
\begin{proof}
    Let $r_i^2\in (0,\infty]$ be the radius of convergence of the generating function of $\cH_i$.
    If $\varphi(0)\neq 0$ set
    $$\mathcal{O}_1=\set{w\in r_1\bB_d :\psi(w)=0, W_{\delta,\psi}:\cH_1\to\cH_1\text{ is invertible}},$$
    and
    $$\mathcal{O}_2=\set{w\in r_2\bB_d :\psi(0)=w, W_{\delta,\psi}:\cH_2\to\cH_1\text{ is invertible}}.$$
    Our goal is to show that $0\in\mathcal{O}_2$. 
    Since $C_V$ is an automorphism of $\cH_2$ for every unitary $V\in\mathcal{U}(d)$, $W_{\lambda,\varphi} C_V=W_{\lambda,V\varphi}:\cH_2\to\cH_1$ is still invertible, we obtain that
    $$ S=\set{V\varphi(0):V\in\mathcal{U}(d)}\subset\mathcal{O}_2 .$$
    We see that $S=\set{z\in\C^d:\av{z}=\av{\varphi(0)}}$, which does not contain the origin.
    Notice that if $W_{\delta,\psi}:\cH_2\to\cH_1$ is invertible, then $W_{\delta,\psi}^{-1}$ is a weighted composition operator by Proposition \ref{cor : sr}.
    Thus, $W_{\lambda,\varphi}W_{\delta,\psi}^{-1}$ is a weighted composition automorphism of $\cH_1$ with composition symbol $\psi^{-1}\circ\varphi$.
    Using $\psi=V\varphi$ with $V\in\mathcal{U}(d)$, we obtain $0\in S'=\varphi^{-1}(S)\subset \mathcal{O}_1$.
    Moreover, $S'$ divides $\C^d$ into two connected open components, and the point $\varphi^{-1}(0)$ lies in the bounded component. 
    Now, for any $U,V\in\mathcal{U}(d)$ we have that $W_{\lambda,\varphi}\pare{W_{\lambda,\varphi}C_V}^{-1}C_U$ is a weighted composition automorphism of $\cH_1$ with composition symbol $U\circ \varphi^{-1}\circ V^{-1}\circ \varphi$.
    Thus, $\mathcal{U}(d)S'\subset \mathcal{O}_1$ and we can rotate $S'$ until we reach $\varphi^{-1}(0)$.  
    Hence, there are some $U,V\in\mathcal{U}(d)$ such that $\varphi^{-1}(0)\in US'\subset\mathcal{O}_1$ i.e. $\varphi^{-1}(0)=U\varphi^{-1}(V\varphi(0))$.
    Finally, we obtain that $W_{\lambda,\varphi}C_V W_{\lambda,\varphi}^{-1} C_U W_{\lambda,\varphi}$ is an invertible weighted composition with composition symbol $\varphi \circ U\circ \varphi^{-1}\circ V\circ \varphi$, which fixes the the origin.
\end{proof}

We now turn our attention to understanding the multiplier symbol.
For $t\in\R$, let $U_t$ denote the unitary map on $\C^d$ defined by $U_t(z) = e^{it}z$.
The following proposition can be generalized to spaces that are invariant under all unitaries of the form $U_t$, but the present formulation suffices for our purposes.
\begin{proposition}\label{prop : end}
    Let $\cH_i$ be a weighted Hardy space with the natural maximal domain $X_i$, $i=1,2$.
    Assume that $X_1\subseteq X_2\subseteq \C^d$ and let $\iota: X_1\hookrightarrow X_2$ be the inclusion map.
    If there exists $\lambda:X_1\to \C$ such that $\lambda(0)\neq 0$ and $W_{\lambda,\iota}:\cH_2\to\cH_1$ is bounded, then $C_\iota:\cH_2\to\cH_1$ is bounded and $\cH_2\subseteq \cH_1$ as sets of functions.
\end{proposition}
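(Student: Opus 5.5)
We use the circle action $U_t$ to average out $\lambda$, keeping only its constant Taylor coefficient. Write $T=W_{\lambda,\iota}:\cH_2\to\cH_1$, so $Tf=\lambda f$ on $X_1$. By Corollary \ref{lem : unitary}, $C_{U_t}$ is unitary on both $\cH_1$ and $\cH_2$; both are unitarily invariant and $X_1,X_2$ are balls, so $U_t$ preserves $X_1$ and $X_2$. Set
$$T_t=C_{U_{-t}}\,T\,C_{U_t}:\cH_2\to\cH_1,\qquad T_tf(z)=\lambda(e^{-it}z)f(z),$$
so $T_t=W_{\lambda\circ U_{-t},\iota}$ and $\norm{T_t}=\norm{T}$ for every $t$.

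Next we average over $t\in[0,2\pi]$. Define $A=\frac{1}{2\pi}\int_0^{2\pi}T_t\,dt$ in the weak operator sense:
$$\inp{Af,g}=\frac1{2\pi}\int_0^{2\pi}\inp{T_tf,g}\,dt.$$
This is well defined because $t\mapsto\inp{T_tf,g}$ is bounded. It is measurable, in fact continuous, since $C_{U_t}$ is strongly continuous: this holds on monomials, which are eigenvectors with eigenvalue $e^{i\av{\alpha}t}$, and extends by uniform boundedness. Hence $A$ is bounded with $\norm{A}\le\norm{T}$. Testing against kernels $g=k^1_x$, and using $\lambda=T\textbf{1}\in\cH_1$ (holomorphic on $\on{int}X_1$) together with $\lambda(e^{-it}x)=\sum_n e^{-int}\lambda_n(x)$, where $\lambda_n$ is the homogeneous part of degree $n$ (this converges for $x\in\on{int}X_1$), we get
$$Af(x)=\frac{1}{2\pi}\int_0^{2\pi}\lambda(e^{-it}x)f(x)\,dt=\lambda(0)f(x).$$
For boundary points of a closed ball $X_1$ we use continuity, or directly the mean value property on the closed disc $\set{\zeta x:\av{\zeta}\le1}$, where $\zeta\mapsto\lambda(\zeta x)$ is holomorphic inside and continuous up to the boundary.

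Therefore $A=\lambda(0)C_\iota$. Since $\lambda(0)\neq0$, $C_\iota=\lambda(0)^{-1}A$ is a bounded operator $\cH_2\to\cH_1$. In particular $f|_{X_1}\in\cH_1$ for every $f\in\cH_2$. As $X_1\subseteq X_2$ and both spaces consist of holomorphic functions determined by their Taylor coefficients at the origin, this gives $\cH_2\subseteq\cH_1$ as sets of functions.

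The main obstacle is making the averaging rigorous: weak measurability of $t\mapsto T_t$, and the pointwise evaluation of the weak integral at boundary points when $X_1$ is a closed ball. I would handle the first through strong continuity of the unitary group $C_{U_t}$. For the second, I would evaluate on $\on{int}X_1$ first and then extend by continuity of functions in $\cH_1$ on $X_1$.
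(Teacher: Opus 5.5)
Your proof is correct and uses the same idea as the paper: conjugate $W_{\lambda,\iota}$ by the unitaries $C_{U_t}$ and average over the circle, so that only the constant term $\lambda(0)$ of the multiplier symbol survives while the operator norm stays bounded by $\norm{W_{\lambda,\iota}}$. The difference is only technical. The paper uses Riemann sums $\mu_n=\frac{1}{n}\sum_k\lambda\circ U_{2\pi k/n}$, extracts a weak limit of $\mu_n F$ via Alaoglu for each $F\in\cH_2$, and then invokes Proposition \ref{Thm : bound} for boundedness. Your weak operator integral, justified by strong continuity of $t\mapsto C_{U_t}$, gives the bounded operator $A=\lambda(0)C_\iota$ directly, with $\norm{C_\iota}\le\norm{W_{\lambda,\iota}}/\av{\lambda(0)}$.
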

\begin{proof}
    We can assume that $\lambda(0)=1$. 
    Observe that $C_{U_t}W_{\lambda,\iota} C_{U_t}^{-1}=W_{\lambda\circ {U_t},\iota}$ with $\norm{W_{\lambda\circ {U_t},\iota}}\le \norm{W_{\lambda,\iota}}$ for any unitary $U_t$. 
    We also know that $\lambda $ is holomorphic on $\on{int}(X_1)$ since $W_{\lambda,\iota}(\textbf{1})=\lambda\in \cH_1$.
    For $z\in \on{int}(X_1)$ we see that:
    $$1= \lambda(0)=\int_0^1\lambda(e^{i2\pi t}z)dt.$$
    The map $t\mapsto \lambda(e^{i2\pi t}z)$ is continuous, so the integral is equal to a limit of Riemann sums:
    $$1=\int_0^1 \lambda(e^{i2\pi t}z)dt=\lim_n \frac{1}{n} \sum_{k=1}^n \lambda\circ U_{\nicefrac{2\pi k}{n}}(z).$$
    Set $\mu_n=\frac{1}{n}\sum_{k=1}^n \lambda\circ U_{\nicefrac{2\pi k}{n}}$, then $\norm{W_{\mu_n,\iota}}\le \norm{W_{\lambda,\iota}}$ for all natural $n$. 
    Fix $F\in \cH_2$. We shall show that exists $f\in \cH_1$ with $F\equiv f$ on $\on{int}(X_1)$. 
    Notice that $W_{\mu_n,\iota}(F)=\mu_nF\circ\iota\in \cH_1$ and $\norm{\mu_nF\circ\iota}_{\cH_1}\le \norm{W_{\mu_n,\iota}}\norm{F}_{\cH_2}\le \norm{W_{\lambda,\iota}}\norm{F}_{\cH_2}$, so, by Alaoglu's theorem, there is subsequence and $f\in\cH_1$ such that $\mu_{n_j}F\circ\iota\xrightarrow{w} f$.
    Since weak convergence implies pointwise convergence for any $z\in \on{int}(X_1)$ we have:
    $$f(z)=\lim_j \inp{\mu_{n_j}F\circ \iota,k^1_z}_{\cH_1}=\lim_j \mu_{n_j}(z)F\circ \iota(z)=\lim_j \mu_{n_j}(z)F(z)=F(z).$$
    Thus, $F=f$ on $\on{int}(X_1)$. By continuity of $F$ and $f$, we obtain that $f=F\circ\iota$, which implies that $\cH_2\subseteq \cH_1$ as sets of functions. 
    Finally, $C_\iota:\cH_2\to\cH_1$ is bounded by Theorem \ref{Thm : bound} with $\norm{C_\iota}\le \norm{W_{\lambda,\iota}}$.  
\end{proof}

We are now in a position to prove Theorem \ref{thm : big1}.
\begin{proof}[\textbf{Proof of Theorem \ref{thm : big1}}]
    Set $r_i\in (0, \infty]$ to be the radius of convergence of $G_i$, $i=1,2$.
    By Proposition \ref{cor : sr}, the natural maximal domains of $\cH_1$ and $\cH_2$ have the same dimension $d$, $r_2<\infty$, and we can assume that $W_{\lambda,\varphi}:\cH_2\to\cH_1$ is invertible with an inverse that is also a weighted composition operator.
    By Proposition \ref{lem : Orr} we may assume that $\varphi(0)=0$.
    
    Set $G(t)=G_2(\gamma t)=\sum_{n=0}^\infty b_n\gamma^{n}t^n$ with $\gamma=\nicefrac{r_2}{r_1}$.
    Denote by $\cH$ the weighted Hardy space in $d$ variables with $G$ as the generating function.
    Since $r_2$ is the radius of convergence of  $G_2$, we have that $r_1$ is the radius of convergence of $G$.
    
    Moreover, Lemma \ref{lem : kernel relation} yields that the map $C_{\sqrt{\gamma}z}:\cH\to\cH_2$ defined by $f(z)\mapsto f(\sqrt{\gamma} z)$ is unitary.
    Thus, $W_{\lambda,\psi}= W_{\lambda,\varphi}C_{\sqrt{\gamma}}:\cH\to\cH_1 $
    is invertible with $\psi(z)=\sqrt{\gamma}\varphi(z)$.
    Since $W_{\lambda,\psi}$ is invertible $\psi\in\on{Aut}(\sqrt{r_1}\bB_d)$ by Proposition \ref{prop : dim}.
    Notice that $\psi(0)=\sqrt{\gamma}\varphi(0)=0$, so $\psi\in\mathcal{U}(d)$ by Cartan's theorem.
    By the last part of Proposition \ref{cor : sr}, $\cH$ and $\cH_1$ have the same natural maximal domain.
    Hence,  $M_{\lambda}=W_{\lambda,\psi}C_{\psi^{-1}}:\cH\to\cH_1$ is invertible. 
    Thus, by Proposition \ref{prop : end}, the inclusion map $\cH\hookrightarrow \cH_1$ is bounded and there is some $c>0$ such that $\norm{z_1^n}_{\cH}^2\le c \norm{z_1^n}_{\cH_1}^2$ for all $n$. 
    So $a_n \le c\cdot b_n\gamma^n $ for all $n$.
    Since $M_{\lambda}^{-1}=M_{\nicefrac{1}{\lambda}}$ is also invertible, for the same reasons, there is some $C>0$ such that $\norm{z_1^n}_{\cH_1}^2\le C \norm{z_1^n}_{\cH}^2$ for all $n$.
    So $b_n\gamma^n \le C\cdot a_n$.
    Therefore,
    $$\frac{1}{c}\le \frac{b_n}{a_n}\gamma^n \le C$$
    for all $n$, and the proof is complete.
\end{proof}
For Theorem \ref{thm : big2}, we need the following lemma.
\begin{lemma}\label{prop : const}
    Let $\cH_1$ and $\cH_2$ be weighted Hardy spaces in one variable.
    Assume that the generating functions of both spaces are entire.
    If $W_{\lambda,\varphi}:\cH_2\to\cH_1$ is invertible with a composition symbol that fixes the origin, then $\lambda$ is a constant function. 
\end{lemma}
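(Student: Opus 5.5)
Both generating functions are entire, so by Lemma \ref{lem : domof} the natural maximal domains are $X_1=X_2=\C$. By Proposition \ref{prop : dim}, $\varphi:\C\to\C$ is a holomorphic bijection, and the inverse of $W_{\lambda,\varphi}$ is the weighted composition operator $W_{\nicefrac{1}{\lambda\circ\varphi^{-1}},\varphi^{-1}}$. An injective entire function is affine. Since $\varphi(0)=0$, we get $\varphi(z)=az$ for some $a\neq0$. By Lemma \ref{lem : inj}, $\lambda$ is entire and never vanishes, so $\lambda=e^{g}$ for some entire $g$. The goal is to show $g$ is constant.

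First I will extract growth bounds on $\lambda$ from the adjoint estimate recorded after Lemma \ref{lem : formw}. Write $G_1,G_2$ for the generating functions. Applying that estimate to $W_{\lambda,\varphi}$ gives
\[
\av{\lambda(z)}^2 G_2(\av{a}^2\av{z}^2)\le \norm{W_{\lambda,\varphi}}^2 G_1(\av{z}^2)\quad\text{for all } z\in\C .
\]
Applying it to the inverse $W_{\nicefrac{1}{\lambda\circ\varphi^{-1}},\varphi^{-1}}$ and substituting $w=az$ gives
\[
G_1(\av{z}^2)\le C\,\av{\lambda(z)}^2 G_2(\av{a}^2\av{z}^2).
\]
Together these say $\av{\lambda(z)}^2\asymp G_1(\av{z}^2)/G_2(\av{a}^2\av{z}^2)$. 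So $\av{\lambda}$ is comparable, up to multiplicative constants, to a radial function $\rho(\av{z})$. Equivalently, $\Re g(z)=h(\av{z})+O(1)$, where $h=\tfrac12\log\bigl(G_1(\av{z}^2)/G_2(\av{a}^2\av{z}^2)\bigr)$.

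The main step is to show that an entire function whose real part is radial up to a bounded error must be constant. Consider $u(z)=\Re g(z)-\Re g(e^{i\theta}z)$ for a fixed $\theta$. It is harmonic on $\C$ and bounded, since both terms equal $h(\av{z})+O(1)$. By Liouville's theorem for harmonic functions, $u$ is constant, and evaluating at $0$ shows $u\equiv0$. Hence $\Re g$ is exactly rotation invariant. A radial harmonic function on all of $\C$ has the form $\alpha+\beta\log\av{z}$, and boundedness at the origin forces $\beta=0$. So $\Re g$ is constant, therefore $g$ is constant, and $\lambda$ is constant.

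A cleaner route to the same conclusion is to expand $g=\sum c_n z^n$ and compute the Fourier coefficients of $\Re g(re^{it})$ in $t$. The bounded oscillation of $\Re g$ in $t$ forces $\av{c_n}r^n\le M$ for every $r$, so $c_n=0$ for $n\ge1$. I expect the main obstacle to be justifying the two kernel inequalities carefully, in particular the reverse bound, which uses that the inverse is itself a weighted composition operator with the stated symbols. Once both are in hand, the rotational Liouville argument is routine.
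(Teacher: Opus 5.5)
Your argument is correct. Up to the two-sided estimate it coincides with the paper's proof. Proposition~\ref{prop : dim} gives $\varphi(z)=az$ and the inverse $W_{\nicefrac{1}{\lambda\circ\varphi^{-1}},\varphi^{-1}}$. The adjoint bound, applied to both operators, then shows that $\av{\lambda(z)}^2$ is comparable to the radial quantity $G_1(\av{z}^2)/G_2(\av{a}^2\av{z}^2)$.

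The two routes differ only in how this comparability is turned into constancy. The paper passes to $\delta=1/(\lambda\circ\varphi^{-1})$ and argues by contradiction. If $\av{\lambda(z_n)}\to\infty$, then $\delta$ is small at $w_n=az_n$. Since the comparison function is radial, $\delta$ is then small on the whole circle $\av{w}=\av{w_n}$. But the maximum modulus principle gives $\max_{\av{w}=\av{w_n}}\av{\delta(w)}\ge\av{\delta(0)}$, which forces $\delta(0)=0$, an impossibility. So $\lambda$ is bounded, and the classical Liouville theorem finishes.

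You instead write $\lambda=e^g$ and apply Liouville's theorem for bounded harmonic functions to $\Re g(z)-\Re g(e^{i\theta}z)$. This shows that $\log\av{\lambda}$ is exactly rotation invariant, hence constant. Your Fourier-coefficient variant is equally valid: for $n\ge1$ the $n$-th coefficient of $t\mapsto\Re g(re^{it})$ is $\tfrac12 c_nr^n$, and subtracting the $t$-independent term does not change it.

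The paper's version needs nothing beyond the maximum modulus principle and Liouville's theorem for entire functions, and never uses a logarithm. Yours uses a holomorphic logarithm, and in return gives the sharper intermediate fact that $\av{\lambda}$ is exactly radial, while avoiding the sequence-and-$\epsilon$ contradiction. Both rest on the same input: a non-vanishing entire function whose modulus is comparable to a radial function must be constant.
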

      
\begin{proof}
    Recall that $\on{Aut}(\C)=\set{az+b:a\neq 0}$, for example see \cite[Exercise 14 in Chapter 8]{stein2003complex}.
    By Proposition \ref{prop : dim}, $\varphi\in\Aut(\C)$ and the inverse of $W_{\lambda,\varphi}$ is $W_{\nicefrac{1}{\lambda\circ\varphi^{-1}},\varphi^{-1}}$.
    Thus, $\varphi(z)=az$ with $a\neq 0$ and for convenience we use $\delta(z)=(\lambda\circ\varphi^{-1}(z))^{-1}$.
    By boundedness of $W_{\lambda,\varphi}$ and its inverse there are positive $c_1,c_2$ such that: 
    \begin{gather*}
        \av{\lambda(z)}^2K_2(\varphi(z),\varphi(z))\le c_1 K_1(z,z) \\
        \av{\delta(w)}^2K_1(\varphi^{-1}(w),\varphi^{-1}(w))\le c_2K_2(w,w)
    \end{gather*}
    for all $z,w\in \C$. 
    Putting $w=\varphi(z)=az$ we obtain
    $$\frac{1}{c_1}\frac{K_2(w,w)}{K_1(\varphi^{-1}(w),\varphi^{-1}(w))}\le\av{\delta(w)}^2\le c_2\frac{K_2(w,w)}{K_1(\varphi^{-1}(w),\varphi^{-1}(w))}$$
    for all $w\in \C$.
    Since $\lambda\in\cH_1$ is entire, by Liouville's theorem, it is sufficient to show that $\lambda$ is bounded. 
    Assume, for the sake of reaching a contradiction, that $\av{\lambda(z_n)}\to \infty$ with $z_n\in \C$. Then, for $w_n=\varphi(z_n)=az_n$, we have $\av{\delta(w_n)}=\av{\frac{1}{\lambda\circ\varphi^{-1}(w_n)}}\to 0$.
    Let $\epsilon>0$ be arbitrary. 
    Since $\varphi$ is injective, there is some $n$ such that $\av{\delta(w_{n})}^2<\epsilon$ and $w_n\neq 0$.
    Notice that for any $t$ we have:
    $$\frac{K_2(U_tw_n,U_tw_n)}{K_1(U_t\varphi^{-1}(w_n),U_t\varphi^{-1}(w_n))}=\frac{K_2(w_n,w_n)}{K_1(\varphi^{-1}(w_n),\varphi^{-1}(w_n))}\le c_1\av{\delta(w_n)}^2< c_1\epsilon.$$
    Put $r=\av{w_n}$, then $\delta$ is holomorphic on $r\D$ and continuous on $\ol{r\D}$, so by the maximum modules principle there is unitary $U_t$ such that
    $$\sup_{w\in r\D} \av{\delta(w)}^2=\av{\delta(U_tw_n)}^2\ge\av{\delta(0)}^2.$$
    But now we have: 
    \begin{align*}
        \av{\delta(0)}^2 & \le\av{\delta(U_tw_n)}^2
        \le c_2\frac{K_2(U_tw_n,U_tw_n)}{K_1(\varphi^{-1}(U_tw_n),\varphi^{-1}(U_tw_n))} \\
        &=c_2\frac{K_2(U_tw_n,U_tw_n)}{K_1(\frac{e^{it}}{a}(w_n),\frac{e^{it}}{a}(w_n))}
        =c_2\frac{K_2(U_tw_n,U_tw_n)}{K_1(U_t\varphi^{-1}(w_n),U_t\varphi^{-1}(w_n))} \\
        & <c_1c_2\epsilon.
    \end{align*}
    Since $\epsilon$ is arbitrary $\delta(0)=0$. 
    However, by Lemma \ref{lem : inj} $\delta$ never vanishes since $W_{\delta,\varphi^{-1}}$ is invertible which leads to a contradiction.
    Hence, $\lambda$ is a constant function. 
\end{proof}

\begin{proof}[\textbf{Proof of Theorem \ref{thm : big2}}]
    By Theorem \ref{thm : big1}, it remains to consider the case when $G_1$ is entire.
    By Proposition \ref{cor : sr}, the natural maximal domain of $\cH_1$ and $\cH_2$ is $\C$, and we can assume that $W_{\lambda,\varphi}:\cH_2\to\cH_1$ is invertible with an inverse that is also a weighted composition operator.
    By Proposition \ref{lem : Orr} and Lemma \ref{prop : const}, we may assume that $\varphi\in\on{Aut}(\C)$ with $\varphi(0)=0$ and $\lambda=\textbf{1}$. 
    Hence, $W_{\lambda,\varphi}=C_{az}:f(z)\mapsto f(az)$ with $a\neq 0$. 
    
    Set $\gamma=\av{a}^2$; then $\gamma^n\norm{z^n}^2_{\cH_1}=\norm{a^nz^n}_{\cH_1}^2\le \norm{C_{az}}^2\norm{z^n}^2_{\cH_2}$, so $b_n\gamma^n \le Ca_n$ for all $n$.
    Also, $\gamma^{-n}\norm{z^n}^2_{\cH_2}=\norm{a^{-n}z^n}^2_{\cH_2}\le\norm{C_{az}^{-1}}^2\norm{z^n}^2_{\cH_1}$, so $a_n\le c b_n \gamma^n$ for all $n$. 
    Therefore,
    $$\frac{1}{c}\le \frac{b_n}{a_n}\gamma^n \le C$$
    for all $n$, and the proof is complete.
\end{proof}

This leads us to the immediate conclusion:
\begin{corollary}\label{cor : endbig1}
    Let $\cH_1$ be weighted Hardy space with the generating function $G_1(t)=\sum_{n=0}^\infty a_nt^n$ and $\cH_2$ be weighted Hardy space with the generating function $G_2(t)=\sum_{n=0}^\infty b_nt^n$. 
    Assume that $\cH_2$ is a one-variable function space or algebraically consistent on its natural maximal domain.
    If $\cH_1$ is weakly related to $\cH_2$, then there exist $C,c,\gamma\in (0,\infty)$ such that $c\le \frac{b_n}{a_n}\gamma^n\le C$ for all $n$.
\end{corollary}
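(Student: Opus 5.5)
The plan is to upgrade the weak relation to a strong one and then invoke Theorems \ref{thm : big1} and \ref{thm : big2}. Let $W_{\lambda,\varphi}:\cH_2\to\cH_1$ be invertible, where $\cH_i$ lives in $d_i$ variables on its natural maximal domain $X_i$. It suffices to show that $W_{\lambda,\varphi}^{-1}$ is itself a weighted composition operator. In that case $\cH_2$ is weakly related to $\cH_1$, so the spaces are strongly related, and the desired estimate $c\le \frac{b_n}{a_n}\gamma^n\le C$ follows from the earlier theorems. Which theorem applies depends on the case.

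\emph{Case 1: $\cH_2$ is algebraically consistent on $X_2$.} By Proposition \ref{prop : new}, $\varphi$ is a bijection and $W_{\lambda,\varphi}^{-1}$ is a weighted composition operator. Hence the spaces are strongly related. Now split according to $G_1$.
\begin{itemize}
\item If $G_1$ is not entire, Theorem \ref{thm : big1} gives the estimate directly.
\item If $G_1$ is entire, Proposition \ref{cor : sr}(3) shows that $G_2$ is entire as well, and Proposition \ref{cor : sr}(2) shows that $d_1=d_2$.
\end{itemize}
Theorem \ref{thm : big2} as stated only handles one variable. So in the entire case with $d\ge 2$, I would repeat its argument directly. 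Proposition \ref{lem : Orr} lets us assume $\varphi(0)=0$. Since $\varphi\in\on{Aut}(\C^d)$ need not be linear, I would instead mimic the proof of Lemma \ref{prop : const}: apply the maximum modulus principle along complex lines together with the two kernel inequalities to show that $\lambda$ is constant. One could also avoid this by noting that an entire generating function forces $\Mult(\cH_2)$ to be the constants, so $\cH_2$ is not algebraically consistent. By Theorem \ref{thm : graet} the entire case therefore cannot occur in Case 1, since the coordinate functions would have to be multipliers. This is the cleaner route, and I would use it.

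\emph{Case 2: $\cH_2$ is in one variable, so $d_2=1\le d_1$.} Proposition \ref{cor : sr}(1) applies, giving $d_1=d_2=1$, and $\cH_1,\cH_2$ are isomorphic via an RKHS isomorphism. In particular they are strongly related, and Theorem \ref{thm : big2} (with $\cH_1$ in one variable) yields the constants $c,C,\gamma$.

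The main obstacle is the bookkeeping in Case 1: making sure the variable counts agree and that the entire case is excluded or handled. The exclusion rests on the observation, made before Lemma \ref{lem : cordinat}, that an entire $G$ gives $\Mult(\cH)=\C$. This conflicts with condition (1) of Theorem \ref{thm : graet}, because a bounded holomorphic function on $\C^d$ is constant, so $z_1\notin\Mult(\cH_2)$. Once that is settled, Theorem \ref{thm : big1} covers Case 1 and Theorem \ref{thm : big2} covers Case 2.
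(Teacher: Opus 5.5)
Your proposal is correct and follows the paper's proof: the one-variable case goes through Proposition~\ref{cor : sr}(1) and Theorem~\ref{thm : big2}, and the algebraically consistent case goes through Proposition~\ref{prop : new}, the observation that $G_2$ cannot be entire, and Theorem~\ref{thm : big1}. You also make explicit a step the paper leaves implicit, namely that Proposition~\ref{cor : sr}(3) carries non-entireness from $G_2$ over to $G_1$, which Theorem~\ref{thm : big1} requires; you were right to drop the idea of mimicking Lemma~\ref{prop : const} for $d\ge 2$, since that argument relies on $\varphi$ being linear.
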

\begin{proof}
    If $\cH_2$ is a weighted Hardy space in one variable, then the result follows from the first part of Proposition \ref{cor : sr} and Theorem \ref{thm : big2}.
    
    If $\cH_2$ is algebraically consistent on its natural maximal domain, then $G_2$ is not entire, and the result follows from Proposition \ref{prop : new} and Theorem \ref{thm : big1}.
\end{proof}

At present, the author does not have a solution for the case where $d\ge 2$ and the generating functions are entire, since $\on{Aut}(\C^d)$ is a wild set.
However, we also have a converse to Theorem \ref{thm : big1}
\begin{theorem}\label{thm : big3}
    Let $\cH_1$ and $\cH_2$ be weighted Hardy spaces in $d$ variables. 
    Assume that $G_1(t)=\sum_{n=0}^\infty a_nt^n$ is the generating function of $\cH_1$ and $G_2(t)=\sum_{n=0}^\infty b_nt^n$ is the generating function of $\cH_2$. 
    If there exist $C,c,\gamma\in (0,\infty)$ such that $c\le \frac{b_n}{a_n}\gamma^n\le C$ for all $n$, then $\cH_1$ and $\cH_2$ are isomorphic via an RKHS isomorphism.
\end{theorem}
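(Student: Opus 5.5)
The plan is to build the isomorphism as a composition of a unitary dilation and an inclusion map. The dilation comes from Lemma \ref{lem : kernel relation}, exactly as in the proof of Theorem \ref{thm : big1}. The inclusion is shown to be bounded with bounded inverse using the weight comparison on monomials.

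\textbf{Step 1 (rescaling).} Set $G(t)=G_2(\gamma t)=\sum_n b_n\gamma^n t^n$ and let $\cH$ be the weighted Hardy space in $d$ variables generated by $G$. The hypothesis $c\le \frac{b_n\gamma^n}{a_n}\le C$ forces $\limsup (b_n\gamma^n)^{1/n}=\limsup a_n^{1/n}$. Hence $G$ and $G_1$ have the same radius of convergence $r_1^2$, and by comparison of series $G(r_1^2)<\infty$ iff $G_1(r_1^2)<\infty$. By Lemma \ref{lem : domof}, $\cH$ and $\cH_1$ therefore have the same natural maximal domain $X_1$. The map $\varphi(z)=\sqrt{\gamma}z$ is a bijection from $X_1$ onto the natural maximal domain $X_2$ of $\cH_2$, which has radius $r_1/\sqrt\gamma$; the closed/open type is preserved since $G(r_1^2)=G_2(\gamma r_1^2)$. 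It also satisfies $K_2(\varphi(z),\varphi(w))=G_2(\gamma\inp{z,w}_d)=K(z,w)$. So Lemma \ref{lem : kernel relation} gives that $C_\varphi:\cH_2\to\cH$ is unitary.

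\textbf{Step 2 (identity map).} Next I show that $\cH=\cH_1$ as sets, with equivalent norms. In both spaces the monomials are orthogonal. The paper's formulas give $\norm{z^\alpha}_{\cH_1}^{-2}=a_{\av\alpha}\frac{\av\alpha!}{\alpha!}$ and $\norm{z^\alpha}_{\cH}^{-2}=b_{\av\alpha}\gamma^{\av\alpha}\frac{\av\alpha!}{\alpha!}$. The hypothesis then reads
$$\tfrac{1}{C}\norm{z^\alpha}_{\cH_1}^2\le\norm{z^\alpha}_{\cH}^2\le \tfrac{1}{c}\norm{z^\alpha}_{\cH_1}^2.$$
For $f=\sum_\alpha f_\alpha z^\alpha$, the two norms are $\sum\av{f_\alpha}^2\norm{z^\alpha}^2$, computed in the respective spaces, so they are comparable. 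Both spaces consist exactly of the power series on the common domain with the corresponding sum finite; this follows from the orthonormal-basis description in the proof of the generating-function proposition. Hence the identity $C_{\on{id}}:\cH_1\to\cH$ is bounded and invertible, with inverse the identity.

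\textbf{Step 3 (assembling).} Put $W=C_{\on{id}}^{-1}C_\varphi=C_\varphi:\cH_2\to\cH_1$, viewed as an invertible composition operator. Its inverse is $C_{\varphi^{-1}}$, again a (weighted) composition operator. Taking $T=W^\ast:\cH_1\to\cH_2$, Lemma \ref{lem : formw} gives $T(k^1_x)=k^2_{\varphi(x)}$, where $\varphi$ is a bijection and $\lambda\equiv 1$ is non-vanishing. Thus $T$ is an RKHS isomorphism; alternatively one can invoke Proposition \ref{prop : dim}.

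The only delicate point is Step 1's claim that the natural maximal domains match, including the boundary case $r_1<\infty$ and the entire case $r_1=\infty$, where $X_1=X_2=\C^d$. This is handled by the two-sided coefficient bound, which makes $\sum a_n s^n$ and $\sum b_n\gamma^n s^n$ converge or diverge together for every $s\ge 0$.
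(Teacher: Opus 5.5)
Your proof is correct and is essentially the paper's argument. The paper shows directly, by expanding $f\in\cH_2$ into homogeneous parts and using the coefficient bound, that $C_{\sqrt{\gamma}z}:\cH_2\to\cH_1$ and its inverse $C_{\sqrt{\gamma^{-1}}z}$ are bounded; you factor the same operator through the rescaled space with generating function $G_2(\gamma t)$ (the device from the proof of Theorem \ref{thm : big1}), which changes nothing of substance. One slip to fix: $G_2$ has radius of convergence $\gamma r_1^2$, so $X_2$ has radius $\sqrt{\gamma}\,r_1$, not $r_1/\sqrt{\gamma}$. This is exactly what makes $z\mapsto\sqrt{\gamma}z$ map $X_1$ onto $X_2$, and it agrees with your own identity $G(r_1^2)=G_2(\gamma r_1^2)$.
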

\begin{proof}
    Let $r_i\in (0,\infty]$ be the radius of convergence of $G_i$, $i=1,2$.
    First, we observe that
    $$\frac{1}{r_1}=\limsup \sqrt[n]{ca_n}\le \limsup \gamma\sqrt[n]{b_n} =\frac{\gamma}{r_2}\le\limsup \sqrt[n]{Ca_n}=\frac{1}{r_1}$$
    so, $\gamma r_1=r_2$.
    Moreover, by symmetry $G_1(r_1)<\infty$ if and only if $G_2(r_2)<\infty$.
    
    If $f(z)=\sum_{n=0}^\infty f_n(z)\in \cH_2$ where $f_n(z)=\sum_{\av{\alpha}=n} f_\alpha z^\alpha$ is a homogeneous polynomial of degree $n$, then 
    $$\sum_{n=0}^\infty \sum_{\av{\alpha}=n} \av{f_\alpha}^2\frac{\alpha!}{n!}\frac{1}{b_n}=\sum_{n=0}^\infty \sum_{\av{\alpha}=n} \av{f_\alpha}^2\norm{z^\alpha}^2_{\cH_2}<\infty $$
    so, 
    $$\sum_{n=0}^\infty \gamma^n\sum_{\av{\alpha}=n} \av{f_\alpha}^2\frac{\alpha!}{n!}\frac{1}{a_n}<\infty$$
    which implies that $f(\sqrt{\gamma} z)=\sum_{n=0}^\infty \sqrt{\gamma}^n f_n(z)\in \cH_1$ with $\norm{f(\sqrt{\gamma}z)}_{\cH_1}^2\le C\norm{f(z)}_{\cH_2}^2$. 
    
    Hence, $C_{\sqrt{\gamma}z}:\cH_2\to\cH_1$ defined by $f(z)\mapsto f(\sqrt{\gamma} z)$ is bounded with $\norm{C_{\sqrt{\gamma}z}}^2\le C$ (Theorem \ref{Thm : bound}).
    Symmetrically, we obtain that $C_{\sqrt{\gamma}z}^{-1}=C_{\sqrt{\gamma^{-1}}z}$ is bounded and then $C_{\sqrt{\gamma}z}^\ast:\cH_1\to\cH_2$ is an RKHS isomorphism. 
\end{proof}
    
In the setting of the last theorems, we see that when $\gamma\ge 1$, the relation $c\le \frac{b_n}{a_n}\gamma^n\le C$ for all $n$ implies $X_1\subseteq X_2$ and the inclusion map $C_\iota:\cH_2\hookrightarrow\cH_1$ is bounded. 
Similarly, if $\gamma\leq 1$, then $X_2\subseteq X_1$ and the inclusion map $C_\iota:\cH_1\hookrightarrow\cH_2$ is bounded. 

\begin{example}\label{ex:1.1}
The following examples show that, in general, the existence of an inclusion map does not imply the existence of a weak relation, nor does the existence of a weak relation imply the existence of an inclusion map. 
    \begin{enumerate}   

        \item\label{ex:1.1.1} There exist isomorphic weighted Hardy spaces in one variable, such that one is strictly contained in the other and the spaces are strongly related.
        Let $G_0(t)=e^t$ be the generating function of the Segal--Bargmann space $\cS$, and let $\cH$ be the weighted Hardy space in one variable with the generating function $G_4(t)=G_0(4t)$. 
        So, $\cS\subseteq\cH$ and by Lemma \ref{lem : kernel relation}, $C_{2z}:\cS\to\cH$ defined by $f(z)\mapsto f(2z)$ is unitary.
        Moreover, the containment is strict. Indeed, if $\cH\subseteq \cS$, then by Theorem \ref{Thm : bound} the inclusion map $\cH\hookrightarrow \cS$ would be bounded. 
        Thus there would existis some $c>0$ such that $\norm{z^n}_{\cS}^2\le c \norm{z^n}_{\cH
        }^2$ for all $n$.
        Thus,  $\frac{4^n}{n!}=\norm{z^n}_{\cH}^{-2}\le c\norm{z^n}_{\cS}^{-2}=\frac{c}{n!}$ for all $n$, which of course can not be true.
        Using this method, one can find examples of spaces in any number of variables.  

         \item There exist weighted Hardy spaces in one variable, with finite radius of convergence, such that one is strictly contained in the other, but the two spaces are not weakly related. 
         For instance, the Hardy space $H^2(\D)$ is contained in the Bergman space $A^2(\D)$, and we saw that $H^2(\D)$ is not weakly related to $A^2(\D)$ (in fact, we can also conclude it from Corollary \ref{cor : endbig1}).
        Thus, in the finite-radius setting, strict containment does not imply even the weakest relation considered here. Similar examples can be constructed for any number of variables.
        
        \item There are also examples with entire generating functions.
        Let $\cS$ be the Segal--Bargmann space and $\cH$ be the weighted Hardy space in one variable with the generating function $G(t)=\sum_{n=0}^\infty \frac{1}{n!n^n}t^n$.
        Hence $\cH\subseteq\cS$.
        We claim that $\cH$ and $\cS$ are not weakly related. 
        Suppose, toward a contradiction, that such a relation exists. 
        In one variable, this yields an invertible composition operator $C_{az}:\cS\to\cH$ given by $f(z)\mapsto f(az)$ where $a\neq 0$. 
        Thus, for all $n$, we get
        $$(\av{a}^2)^nn!n^n=\norm{a^nz^n}_\cH^2\le \norm{C_{az}}^2\norm{z^n}^2_\cS=\norm{C_{az}}^2n!,$$
        which is possible only if $a=0$. 
        Hence, $\cH$ and $\cS$ are not weakly related. 

        Currently, it remains open whether, in dimensions $d\ge 2$, the assumptions $\cH_1\subseteq \cH_2$ and the entireness of the generating functions are sufficient to guarantee the existence of some relation.
    \end{enumerate}
\end{example}
We complete the section by dealing with the case where $W_{\lambda,\varphi}$ is a coisometry map. 
The Hilbert function spaces $\cH_1$ and $\cH_2$ are called \emph{isometricly isomorphic as RKHSs} if there exists an isometric RKHS isomorphism $T:\cH_1\to \cH_2$.
In this situation, the induced weighted composition operator $T^\ast$ is a coisometry.
Notice that when $W_{\lambda,\varphi}:\cH_2\to\cH_1$ is a unitary map between weighted Hardy spaces with the same number of variables, then $W_{\lambda,\varphi}^\ast$ is an isometric RKHS isomorphism.
We begin with simple and standard observations.
\begin{lemma}\label{lem : last}
    Let $\cH_1$ and $\cH_2$ be weighted Hardy spaces in $d$ variables.
    Assume that $X_1$ is the natural maximal domain of $\cH_1$.
    If $W_{\lambda,\varphi}:\cH_2\to\cH_1$ is a coisometry, then:
    \begin{enumerate}
        \item $K_2(\varphi(\cdot),\varphi(0))$ does not vanish and for any $x\in X_1$ we have
           $$K_2(\varphi(x),\varphi(0))=\frac{1}{\lambda(x)\ol{\lambda(0)}}.$$

        \item $W_{\lambda,\varphi}$ is unitary.

        \item  $\varphi(0)=0$ if and only if $\lambda$ is a constant function. In particular, in both cases $\av{\lambda}\equiv1$.
    \end{enumerate}
\end{lemma}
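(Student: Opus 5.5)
The plan is to exploit the coisometry identity $W_{\lambda,\varphi}W_{\lambda,\varphi}^\ast=I_{\cH_1}$ together with the formula $W_{\lambda,\varphi}^\ast(k^1_x)=\ol{\lambda(x)}k^2_{\varphi(x)}$ from Lemma \ref{lem : formw}.

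\textbf{Part (1).} Since $\cH_1$ is normalized at the origin, $k^1_0=\textbf{1}$. Applying the coisometry identity to $k^1_0$ gives
$$\textbf{1}=W_{\lambda,\varphi}\pare{\ol{\lambda(0)}k^2_{\varphi(0)}}=\ol{\lambda(0)}\,\lambda(\cdot)\,K_2(\varphi(\cdot),\varphi(0))$$
on $X_1$. So $\lambda(x)\ol{\lambda(0)}K_2(\varphi(x),\varphi(0))=1$ for every $x\in X_1$. This forces $\lambda(0)\neq0$ and shows that $K_2(\varphi(\cdot),\varphi(0))$ never vanishes, which is exactly (1).

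\textbf{Part (2).} A coisometry is an isometry on $(\ker W)^\perp$, so it suffices to show $W_{\lambda,\varphi}$ is injective. Its range is all of $\cH_1$, and $\textbf{1}$ lies in that range, so I would imitate Proposition \ref{prop : dim} to get bijectivity of $\varphi$. Alternatively, one can use that $W^\ast$ is an isometry, hence injective, so $\lambda$ never vanishes and $\varphi$ is injective and holomorphic (Lemma \ref{lem : inj}). The hypotheses of Proposition \ref{prop : dim} need invertibility, though, so I would argue directly that $\ker W_{\lambda,\varphi}=0$. If $W(F)=\lambda\,F\circ\varphi=0$, then $F$ vanishes on $\varphi(X_1)$, and $\varphi(\on{int}X_1)$ is open in $\C^d$ by the open mapping theorem since $\varphi$ is injective holomorphic. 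The identity theorem then gives $F=0$ on $\on{int}X_2$, and by continuity on $X_2$. Hence $W$ is injective. An injective coisometry is unitary, because $W^\ast W$ is the projection onto $(\ker W)^\perp=\cH_2$.

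\textbf{Part (3).} If $\lambda\equiv\lambda(0)$ is constant, then (1) gives $K_2(\varphi(x),\varphi(0))=|\lambda(0)|^{-2}$ constant in $x$. Writing $K_2(z,w)=G_2(\inp{z,w})$ with $b_1>0$, we get that $G_2(\inp{\varphi(x),\varphi(0)})$ is constant. Since $\varphi(X_1)$ contains an open set, $\inp{z,\varphi(0)}$ ranges over an open set if $\varphi(0)\neq0$, and the nonconstant holomorphic $G_2$ cannot be constant there. This forces $\varphi(0)=0$. Conversely, if $\varphi(0)=0$, then $K_2(\cdot,0)\equiv1$, so $\lambda(x)\ol{\lambda(0)}=1$ for all $x$, and $\lambda$ is constant with $|\lambda(0)|^2=1$. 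In both cases $|\lambda|\equiv1$.

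The main obstacle is Part (2), namely establishing injectivity cleanly. The key input is openness of $\varphi(\on{int}X_1)$, which needs the equal dimension $d$ and the injectivity of $\varphi$ coming from injectivity of $W^\ast$. I would double-check that Lemma \ref{lem : inj} applies, since it only requires $W^\ast$ to be injective.
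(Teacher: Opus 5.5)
Your proof is correct and follows essentially the same route as the paper. Part (1) applies the coisometry relation at the kernel function $k^1_0=\textbf{1}$. Part (2) gets injectivity of $W_{\lambda,\varphi}$ from Lemma \ref{lem : inj}, openness of $\varphi(\on{int}(X_1))$ and the identity theorem. Part (3) is read off from the identity in (1).

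The only difference is in showing that a constant $\lambda$ forces $\varphi(0)=0$. The paper uses injectivity of $W_{\lambda,\varphi}$ together with $2$-interpolation, or alternatively differentiates at $0$. You instead apply the identity theorem to $G_2$ on the open set $\set{\inp{z,\varphi(0)}_d : z\in\varphi(\on{int}(X_1))}$, which lies in the disc of convergence of $G_2$ because $\varphi(0)\in\on{int}(X_2)$. Both arguments work.
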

\begin{proof}
    For $x,y\in X_1$ we have 
    \begin{gather*}
        K_1(x,y)=\inp{k^1_y,k^1_x}_{\cH_1}=\inp{W_{\lambda,\varphi}^\ast k^1_y,W_{\lambda,\varphi}^\ast k^1_x}_{\cH_2}=\overline{\lambda(y)}\lambda(x)K_2(\varphi(x),\varphi(y)).
    \end{gather*}
    In particular, for $y=0$ we have $1=\overline{\lambda(0)}\lambda(x)K_2(\varphi(x),\varphi(0)).$
    Hence, $k^2_{\varphi(0)}\circ \varphi$ never vanishes and $(1)$ follows. 

    By Lemma \ref{lem : inj}, if $f\in\ker W_{\lambda,\varphi}$, then $f=0$ on the open set $\varphi(\on{int}(X_1))$, so by the identity theorem $f\equiv0$. Thus, $W_{\lambda,\varphi}$ is also injective and $(2)$ holds.
    
    For $(3)$, we have  
    $$W_{\lambda,\varphi}(k^2_{\varphi(0)})(x)=\lambda(x)k^2_{\varphi(0)}\circ \varphi(x)=\frac{1}{\overline{\lambda(0)}}$$
    for all $x\in X_1$.
    Hence, if $\varphi(0)=0$ then $k^2_{\varphi(0)}$ is a constant function and $\lambda$ is also constant and it is immediate that $\av{\lambda}\equiv1$.
    On the other hand, if $\lambda$ is a constant function,  $k^2_{\varphi(0)}$ must be a constant function by the injectivty of $W_{\lambda,\varphi}$.
    By 2-interpolating, we obtain that $k^2_{\varphi(0)}=k^2_0$ and $\varphi(0)=0.$
    Alternatively, if $\lambda$ is a constant function and $K_2(x,y)=G(\inp{x,y}_d)$, then $$G(\inp{\varphi(zx),\varphi(0)}_d)=\av{\lambda(0)}^{-2}$$ 
    for all $z\in\D$ and $x\in X_1$.
    Differentiating both sides with respect to $z$ and plugging in $0$ to obtain
    $$\inp{D\varphi(0)x,\varphi(0)}_d=0$$
    for all $x\in X_1$. Since $\varphi$ is injective $D\varphi(0)$ is invertible \cite[Theorem  1.6.6]{curry2025tasty}, so $\varphi(0)=0$.
\end{proof}

Although Theorems \ref{thm : big1} and \ref{thm : big2} require an assumption on the natural maximal domains, the situation is different for unitary equivalence: 
In that setting, no limitations on either the dimension or the radius of the natural maximal domain arise.
\begin{theorem}\label{thm : big 4}
    Let $\cH_1$ and $\cH_2$ be weighted Hardy spaces in $d$ variables.
    Assume that $G_1(t)=\sum_{n=0}^\infty a_nt^n$ is the generating function of $\cH_1$  and $G_2(t)=\sum_{n=0}^\infty b_nt^n$ is the generating function of $\cH_2$.
    The following are equivalent:
    \begin{enumerate}
        \item There exists a unitary $W_{\lambda,\varphi}:\cH_2\to\cH_1$.
        \item There exists a coisometry $W_{\lambda,\varphi}:\cH_2\to\cH_1$.
        \item There exists $\gamma\in(0,\infty)$ such that $a_n=b_n\gamma^n$ for all $n$.
    \end{enumerate} 
\end{theorem}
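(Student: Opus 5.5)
The plan is to prove $(1)\Rightarrow(2)$, $(2)\Rightarrow(1)$, $(3)\Rightarrow(1)$ and then $(1)\Rightarrow(3)$, which is the substantive direction.

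The implication $(1)\Rightarrow(2)$ is trivial. The implication $(2)\Rightarrow(1)$ is part (2) of Lemma \ref{lem : last}.

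For $(3)\Rightarrow(1)$, assume $a_n=b_n\gamma^n$. Then $G_1(t)=G_2(\gamma t)$, so
$$K_1(z,w)=G_2(\inp{\sqrt{\gamma}z,\sqrt{\gamma}w}_d)=K_2(\sqrt{\gamma}z,\sqrt{\gamma}w).$$
The radii of convergence satisfy $\gamma r_1^2=r_2^2$ (as in the proof of Theorem \ref{thm : big3}), and $G_1(r_1^2)=G_2(r_2^2)$. By Lemma \ref{lem : domof}, the map $z\mapsto\sqrt{\gamma}z$ is therefore a bijection of the natural maximal domains. Lemma \ref{lem : kernel relation} then shows that $C_{\sqrt{\gamma}z}:\cH_2\to\cH_1$ is unitary.

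For $(1)\Rightarrow(3)$, let $W_{\lambda,\varphi}$ be unitary. I first normalize so that $\varphi(0)=0$. Since $W_{\lambda,\varphi}$ is invertible and both spaces live in $d$ variables, Proposition \ref{prop : dim} shows that $\varphi$ is a bijection between the natural maximal domains. I then want to run the disc trick of Proposition \ref{lem : Orr} inside the class of \emph{unitary} weighted composition operators. The operators $C_U$ for $U\in\mathcal{U}(d)$ are unitary (Corollary \ref{lem : unitary}), and inverses and products of unitary weighted composition operators are again unitary weighted composition operators (by Proposition \ref{prop : dim}(3)). Hence the operator produced in that proof, $W_{\lambda,\varphi}C_VW_{\lambda,\varphi}^{-1}C_UW_{\lambda,\varphi}$, is unitary, and its composition symbol fixes $0$.

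This normalization is the step I expect to be the main obstacle, because Proposition \ref{lem : Orr} was only stated for invertible operators and its topological rotation argument has to be rechecked. As a fallback that avoids it, Lemma \ref{lem : last}(1) gives
$$K_1(x,y)=\ol{\lambda(y)}\lambda(x)K_2(\varphi(x),\varphi(y)),$$
and one could try to compare curvature or derivative data of the kernels at $0$ directly.

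Once $\varphi(0)=0$, Lemma \ref{lem : last}(3) gives $\lambda\equiv c$ with $\av{c}=1$. Moreover $\varphi$ is a biholomorphism between balls of radii $r_1,r_2$ (or between copies of $\C^d$) fixing $0$.

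In the bounded case, $\psi=(r_1/r_2)\varphi$ is an automorphism of $r_1\bB_d$ fixing $0$, so by Cartan's theorem $\varphi=\sqrt{\gamma}U$ with $U$ unitary and $\sqrt{\gamma}=r_2/r_1$.

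In the entire case with $d=1$, $\varphi(z)=az$ and we set $\gamma=\av{a}^2$. For $d\ge2$ in the entire case, I would instead argue directly from the kernel identity $G_1(\inp{x,y}_d)=G_2(\inp{\varphi(x),\varphi(y)}_d)$. Differentiating at $y=0$ identifies $D\varphi(0)^*D\varphi(0)$ as a scalar multiple of the identity. Comparing the remaining Taylor coefficients should then force $\varphi$ to be linear, though this last point is uncertain.

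In every case $K_1(z,w)=K_2(\sqrt{\gamma}Uz,\sqrt{\gamma}Uw)=G_2(\gamma\inp{z,w}_d)$. Comparing the coefficients of $t^n$ in $G_1(t)=G_2(\gamma t)$ gives $a_n=b_n\gamma^n$ for all $n$, which is (3).
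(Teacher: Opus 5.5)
\textbf{Gap: the entire case with $d\ge 2$.} The genuine gap is the case you yourself mark as uncertain: $G_1,G_2$ entire and $d\ge 2$. There is no Cartan-type rigidity for $\operatorname{Aut}(\mathbb{C}^d)$, which contains many nonlinear maps fixing $0$. So linearity of $\varphi$ has to come from the kernel identity, and ``comparing the remaining Taylor coefficients should force $\varphi$ to be linear'' is not yet an argument.

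The missing observation is that the first $\bar w$-derivative at $w=0$ already gives linearity, provided you keep $z$ arbitrary. You used it only to read off $D\varphi(0)^*D\varphi(0)$, which discards all but its linear part in $z$. After normalizing, $G_1(\langle z,w\rangle_d)=G_2(\langle\varphi(z),\varphi(w)\rangle_d)$ with $\varphi(0)=0$. Hence each term $b_n\langle\varphi(z),\varphi(w)\rangle_d^n$ with $n\ge 2$ vanishes to second order in $\bar w$ at $w=0$. Applying $\partial/\partial\bar w_j$ at $w=0$ gives
\[
a_1 z_j=b_1\langle\varphi(z),\varphi_{e_j}\rangle_d ,\qquad\text{i.e.}\qquad b_1A^*\varphi(z)=a_1 z,
\]
where $A=D\varphi(0)$ has columns $\varphi_{e_j}$. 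Since $\varphi$ is a biholomorphism (Proposition~\ref{prop : dim}), $A$ is invertible. Therefore $\varphi(z)=\tfrac{a_1}{b_1}(A^*)^{-1}z$ is linear, so $\varphi=A$ and $A^*A=\gamma I$ with $\gamma=a_1/b_1$. Then $G_1(t)=G_2(\gamma t)$, and comparing coefficients gives (3).

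This is essentially the paper's argument: it compares the coefficients of $z^\alpha\bar w^{e_i}$ for $|\alpha|\ge 2$. Because it uses nothing about the domains, it handles all cases at once. Your split into the bounded case (Cartan), the entire case with $d=1$, and the entire case with $d\ge 2$ therefore becomes unnecessary. Your treatment of the bounded case and of the one-variable entire case is correct, but redundant once this is in place.

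\textbf{The normalization is not an obstacle.} The step you expected to be the main difficulty is fine, and it is exactly what the paper does. The proof of Proposition~\ref{lem : Orr} produces $U,V\in\mathcal{U}(d)$, depending only on $\varphi$, with $\varphi^{-1}(0)=U\varphi^{-1}(V\varphi(0))$. Then $W_{\lambda,\varphi}C_VW_{\lambda,\varphi}^{-1}C_UW_{\lambda,\varphi}$ is a product of unitaries whose composition symbol $\varphi\circ U\circ\varphi^{-1}\circ V\circ\varphi$ fixes $0$. Lemma~\ref{lem : last}(3) then makes its multiplier symbol a unimodular constant.

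Your proof of $(3)\Rightarrow(1)$ is fine. Your check that $z\mapsto\sqrt{\gamma}z$ maps the natural maximal domains bijectively, via $\gamma r_1^2=r_2^2$ and $G_1(r_1^2)=G_2(r_2^2)$, is slightly more careful than the paper.
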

\begin{proof}
    The implication $(1)\Rightarrow(2)$ is immediate, and $(2)\Rightarrow(1)$ follows from Lemma \ref{lem : last}. Therefore, it remains to prove the equivalence between $(1)$ and $(3)$.

    $(3)\Rightarrow(1):$ If there exists $\gamma\in(0,\infty)$ such that $a_n=b_n\gamma^n$ for all $n$, then $G_1(t)=G_2(\gamma t)$ and $C_{\sqrt{\gamma}z} :\cH_2\to\cH_1$ is unitary by Lemma \ref{lem : kernel relation}.

    $(1)\Rightarrow (3):$ If $W_{\lambda,\varphi}:\cH_2\to\cH_1$ is a unitary map. We may assume that $\varphi(0)=0$ and $\lambda\equiv\textbf{1}$, so that $W_{\lambda,\varphi}=C_\varphi$.
    Indeed, for $U, V$ as in the proof of Proposition \ref{lem : Orr}, $W_{\lambda,\varphi}C_V W_{\lambda,\varphi}^{-1} C_U W_{\lambda,\varphi}$ is still unitary with composition symbol that fixes the origin, so by Lemma \ref{lem : last} $\lambda$ is a unimodular constant.
    Hence, $G_1(\inp{z,w}_d)=G_2(\inp{\varphi(z),\varphi(w)}_d)$.
    
    Since $\varphi$ is holomorphic and $\varphi(0)=0$, we may write $\varphi(z)=\sum_{\av{\alpha}\ge 1} \varphi_\alpha z^\alpha$ where $\varphi_\alpha\in \C^d$.
    Set $A=D\varphi(0)$, then $Az=\sum_{i=1}^d \varphi_{e_i}z_i$. Since $\varphi$ is injective, $A$ is invertible and then $\varphi_{e_1},\dots,\varphi_{e_d}$ form a basis of $\C^d$.
    Moreover, we have 
    $$\inp{\varphi(z),\varphi(w)}_d=\sum_{\av{\alpha},\av{\beta}\ge 1} \inp{\varphi_\alpha,\varphi_\beta}_dz^\alpha\ol{w}^\beta,$$
    and 
    \begin{equation}\label{eq:-2}
        \sum_{\alpha} a_{\av{\alpha}}\frac{\av{\alpha}!}{\alpha!}z^\alpha\ol{w}^\alpha=\sum_{n=0}^\infty b_n \pare{\sum_{\av{\alpha},\av{\beta}\ge 1} \inp{\varphi_\alpha,\varphi_\beta}_dz^\alpha\ol{w}^\beta}^n.
    \end{equation}
    By the uniqueness of the Taylor series, we can compare the coefficients of both sides at \eqref{eq:-2}.
    Fix a multi index $\alpha$ with $\av{\alpha}\ge 2$ and $i\in\set{1,\dots,d}$. 
    The coefficient of $z^\alpha\ol{w}^{e_i}$ on the left-hand side of \eqref{eq:-2} is zero. On the right-hand side, only the term corresponding to $n=1$ can contribute to this coefficient, since every antiholomorphic monomial in $\inp{\varphi(z),\varphi(w)}_d$ has a degree of at least one. 
    Hence
    $$b_1 \inp{\varphi_\alpha,\varphi_{e_i}}_dz^\alpha\ol{w}^{e_i}=0.$$
    Since $b_1\neq 0$, it follows that $\inp{\varphi_\alpha,\varphi_{e_i}}=0$ for every $i\in\set{1,...,d}$.
    As $\varphi_{e_1},\dots,\varphi_{e_d}$ span $\C^d$, we obtain that $\varphi_\alpha=0$ for every $\av{\alpha}\ge 2$.
    Thus, $\varphi(z)=Az$ and $a_1\inp{z,w}_d=b_1\inp{Az,Aw}_d$ (by comparing the coefficient of $z^{e_i}\ol{w}^{e_i}$ or using the chain rule).
    Therefore, $\inp{Az,Aw}_d=\gamma\inp{z,w}_d$ where $\gamma=\frac{a_1}{b_1}$ and $G_1(\inp{z,w}_d)=G_2(\gamma\inp{z,w}_d)$. Finally, another comparison of the coefficients implies that $a_n=b_n\gamma^n$ for all $n$.
\end{proof}
The assumption that the spaces have the same number of variables is necessary. Some examples of coisometries which are not unitary are closely related to complete Pick spaces and \cite[Theorem 8.2]{Jim}. Those examples can be deduced from \cite[Chapter 3]{mironov2024hilbert}, which also inspired the proof of the last theorem.
For further classification results on the symbols of coisometric weighted composition operators $W_{\lambda,\varphi}:\cH\to\cH$, we refer the reader to \cite{hartz2025weighted,le2012self,martin2019co}. 
\begin{remark}
    In Theorem \ref{thm : gilad}, Ofek and Sofer require a composition symbol which is an automorphism of the disk. 
    Thus, after using the ``disc trick'', if necessary, and by Cartan's uniqueness theorem, we obtain an invertible weighted composition operator whose composition symbol is the identity map; that is, a multiplication operator.
    Inspecting the preceding arguments, we see that their theorem remains valid for general RKHS isomorphisms, namely, to operators of the form $T^\ast=W_{\lambda,\varphi}$ with $\lambda$ not necessarily constant. 
\end{remark}

\section{Application to the Banach--Mazur distance}
In \cite{OFEK}, the authors introduced an invariant of the Banach--Mazur distance for Hilbert function spaces and multiplier algebras. 
In this section, we apply our results to obtain a legitimate distance on strongly related weighted Hardy spaces and an estimate on this distance in terms of the relations between the weights of the generating function.  

\begin{definition}
    Let $\cH_1$ and $\cH_2$ be weighted Hardy spaces which are isomorphic via an RKHS isomorphism (equivalently, strongly related). Define
    $$\delta_{\on{RK}}\pare{\cH_1,\cH_2}=\inf\set{\norm{T}\norm{T^{-1}}: T:\cH_1\to\cH_2\text{ is an RKHS isomorphism}}.$$
    The \emph{reproducing kernel Banach-Mazur distance} between $\cH_1$ and $\cH_2$ is defined to be
    $$\rho_{\on{RK}}\pare{\cH_1,\cH_2}=\log\pare{\delta_{\on{RK}}\pare{\cH_1,\cH_2}}.$$
\end{definition}
Theorems \ref{thm : big1} and \ref{thm : big3} induce an equivalence relation on the class of weighted Hardy spaces in $d$ variables. We can use Theorem \ref{thm : big3} and its proof to obtain an upper bound:
\begin{lemma}\label{lem:ed}
    Let $\cH_1$ and $\cH_2$ be weighted Hardy spaces in $d$ variables. 
    Assume that $G_1(t)=\sum_{n=0}^\infty a_nt^n$ is the generating function of $\cH_1$ and $G_2(t)=\sum_{n=0}^\infty b_nt^n$ is the generating function of $\cH_2$. 
    If there exist $C,c,\gamma\in (0,\infty)$ such that $c\le \frac{b_n}{a_n}\gamma^n\le C$ for all $n$, then 
        $$\delta_{\on{RK}}(\cH_1,\cH_2)^2\le \frac{C}{c}.$$
    In particular, $\delta_{\on{RK}}(\cH_1,\cH_2)\le \max\set{C,\frac{1}{c}}$.
\end{lemma}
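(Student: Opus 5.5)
The plan is to use the explicit RKHS isomorphism from the proof of Theorem \ref{thm : big3} and compute both norms exactly on the orthogonal monomial basis. Let $T=C_{\sqrt{\gamma}z}:\cH_2\to\cH_1$, $f(z)\mapsto f(\sqrt{\gamma}z)$. By that proof, $T$ is bounded and invertible with $T^{-1}=C_{\sqrt{\gamma^{-1}}z}$, and $T^\ast:\cH_1\to\cH_2$ is an RKHS isomorphism. Since $\norm{T^\ast}=\norm{T}$ and $\norm{(T^\ast)^{-1}}=\norm{T^{-1}}$, it suffices to show $\norm{T}^2\norm{T^{-1}}^2\le \frac{C}{c}$.

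First I bound $\norm{T}$. Write $f=\sum_\alpha f_\alpha z^\alpha\in\cH_2$. Since $T(z^\alpha)=\gamma^{\av{\alpha}/2}z^\alpha$, the operator $T$ is diagonal with respect to the orthogonal monomial basis. The norms are $\norm{z^\alpha}^2_{\cH_1}=\frac{\alpha!}{\av{\alpha}!a_{\av{\alpha}}}$ and $\norm{z^\alpha}^2_{\cH_2}=\frac{\alpha!}{\av{\alpha}!b_{\av{\alpha}}}$. Hence
$$\frac{\norm{T z^\alpha}^2_{\cH_1}}{\norm{z^\alpha}^2_{\cH_2}}=\frac{b_n\gamma^n}{a_n},\qquad n=\av{\alpha}.$$
A diagonal operator has norm equal to the supremum of its diagonal ratios, so $\norm{T}^2=\sup_n \frac{b_n\gamma^n}{a_n}\le C$.

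The same computation for $T^{-1}$ gives $\norm{T^{-1}}^2=\sup_n\frac{a_n}{b_n\gamma^n}\le\frac1c$. Multiplying the two bounds yields $\delta_{\on{RK}}(\cH_1,\cH_2)^2\le\norm{T^\ast}^2\norm{(T^\ast)^{-1}}^2\le \frac{C}{c}$.

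For the ``in particular'' statement, $\frac{C}{c}=C\cdot\frac1c\le \max\set{C,\frac1c}^2$, and taking square roots gives $\delta_{\on{RK}}(\cH_1,\cH_2)\le\max\set{C,\frac1c}$.

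The only point needing care is the diagonal-norm claim, but it is immediate from Parseval: $\norm{Tf}^2=\sum_\alpha\av{f_\alpha}^2\gamma^{\av{\alpha}}\norm{z^\alpha}_{\cH_1}^2\le C\sum_\alpha \av{f_\alpha}^2\norm{z^\alpha}^2_{\cH_2}$. This is exactly the estimate already carried out in the proof of Theorem \ref{thm : big3}, so I expect no real obstacle.
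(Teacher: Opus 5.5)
Your proposal is correct and takes the same route as the paper: both use the RKHS isomorphism $C^\ast_{\sqrt{\gamma}z}$ from the proof of Theorem \ref{thm : big3}, and both bound $\norm{C_{\sqrt{\gamma}z}}^2\le C$ and $\norm{C_{\sqrt{\gamma}z}^{-1}}^2\le \frac{1}{c}$ via the monomial expansion. Your version spells out the diagonal-norm computation and the ``in particular'' step, which the paper leaves implicit.
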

\begin{proof}
    By the proof of Theorem \ref{thm : big3} we see that the map $C^\ast_{\sqrt{\gamma}z} :\cH_1\to\cH_2$ is an RKHS isomorphism with inverse $C^\ast_{\sqrt{\gamma^{-1}}z} :\cH_2\to\cH_1$. 
    In addition, we saw in the proof that $\norm{C^\ast_{\sqrt{\gamma}z}}^2\le C$.  Similarly, we can deduce that $\norm{C^\ast_{\sqrt{\gamma^{-1}}z}}^2\le \frac{1}{c}$.  
    Thus, by definition of $\delta_{\on{RK}}$ we obtain
    $$\delta_{\on{RK}}(\cH_1,\cH_2)^2\le\norm{C_{\sqrt{\gamma}z}}^2\norm{C_{\sqrt{\gamma}z}^{-1}}^2\le \frac{C}{c}$$
\end{proof}
However, Theorem \ref{thm : big 4} reveals that $\rho_{\on{RK}}$ is not a distance function on those equivalence classes (one can find infinitely many different spaces which are isometrically strongly related).
Therefore, we need to restrict ourselves to smaller sets.

For a weighted Hardy space $\cH$ in $d$ variables and radius $r\in(0,\infty)$, we denote by $[\cH ]_r$ the equivalence class of all weighted Hardy spaces in $d$ variables and radius $r$ which are isomorphic via an RKHS isomorphism to $\cH$. 

Notice that by the proof of Theorem \ref{thm : big1}, for every $\cK\in[\cH]_r$, the identity map $C_{\on{id}}:\cH\to\cK$ is an invertible weighted composition operator.
Moreover, using the orthogonality of the monomials, it is easy to verify that
$$\norm{C_{\on{id}}}=\sup_n\frac{\norm{z_1^n}_\cK}{\norm{z_1^n}_\cH}.$$
The normalization $\norm{\textbf{1}}_{\cH}=\norm{\textbf{1}}_{\cH}=1$ implies that $\norm{C_{\on{id}}}\ge 1$. 
For a lower bound, we have: 
\begin{lemma}\label{lem:dis}
    Let $\cH_1$ and $\cH_2$ be elements in $[\cH]_r$. Denote by $I:\cH_2\to \cH_1$ the identity map, then 
    $$\norm{I}\norm{I^{-1}}\le\delta_{\on{RK}}\pare{\cH_1,\cH_2}^3.$$
    In particular, $\min\set{\norm{I}^2,\norm{I^{-1}}^2}\le \delta_{\on{RK}}\pare{\cH_1,\cH_2}^3$ and $\max\set{\norm{I},\norm{I^{-1}}}\le \delta_{\on{RK}}\pare{\cH_1,\cH_2}^3$.
\end{lemma}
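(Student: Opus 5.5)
The plan is to start from an arbitrary RKHS isomorphism $T:\cH_1\to\cH_2$ and bound $\norm{I}\norm{I^{-1}}$ by $(\norm{T}\norm{T^{-1}})^3$; taking the infimum over $T$ then gives the inequality. Set $W=T^\ast=W_{\lambda,\varphi}:\cH_2\to\cH_1$. It is invertible with $\norm{W}\norm{W^{-1}}=\norm{T}\norm{T^{-1}}$. If $\varphi(0)\neq 0$, apply the ``disc trick'' exactly as in the proof of Proposition \ref{lem : Orr}. There are $U,V\in\mathcal{U}(d)$ such that
$$\widetilde W=W_{\lambda,\varphi}C_V W_{\lambda,\varphi}^{-1}C_U W_{\lambda,\varphi}:\cH_2\to\cH_1$$
is an invertible weighted composition operator whose composition symbol fixes the origin. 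Since $C_U$ and $C_V$ are unitary (Corollary \ref{lem : unitary}), we get $\norm{\widetilde W}\le\norm{W}^2\norm{W^{-1}}$ and $\norm{\widetilde W^{-1}}\le\norm{W^{-1}}^2\norm{W}$. Hence $\norm{\widetilde W}\norm{\widetilde W^{-1}}\le(\norm{T}\norm{T^{-1}})^3$. This cubing step is where the exponent $3$ comes from.

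Next, write $\widetilde W=W_{\mu,\psi}$ with $\psi(0)=0$. Both spaces have radius $r<\infty$ and lie in the same class, so by Proposition \ref{cor : sr} they have the same natural maximal domain (an open or closed ball of radius $r$). By Proposition \ref{prop : dim}, $\psi$ is a biholomorphism of the open ball fixing $0$, so $\psi$ is unitary by Cartan's theorem. Therefore $M_\mu=\widetilde W C_{\psi^{-1}}:\cH_2\to\cH_1$ is invertible, with $\norm{M_\mu}=\norm{\widetilde W}$, $\norm{M_\mu^{-1}}=\norm{\widetilde W^{-1}}$, and $M_\mu^{-1}=M_{1/\mu}$. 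Moreover $\mu(0)\ne0$ by Lemma \ref{lem : inj}.

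The key quantitative step is to track constants in the proof of Proposition \ref{prop : end}. Apply that argument to the normalized multiplier $\mu/\mu(0)$. Conjugation by the unitaries $C_{U_t}$ and the Riemann-sum averaging do not increase the norm, and the weak limit gives $f=F$ with $\norm{f}\le\liminf\norm{\mu_{n_j}F}$. This yields
$$\norm{I}\le \frac{\norm{M_\mu}}{\av{\mu(0)}}.$$
Applying the same argument to $M_{1/\mu}:\cH_1\to\cH_2$, whose multiplier takes the value $1/\mu(0)$ at the origin, gives
$$\norm{I^{-1}}\le \av{\mu(0)}\,\norm{M_\mu^{-1}}.$$
Multiplying, the factors of $\av{\mu(0)}$ cancel, and
$$\norm{I}\norm{I^{-1}}\le\norm{M_\mu}\norm{M_\mu^{-1}}\le(\norm{T}\norm{T^{-1}})^3.$$
Taking the infimum over $T$ proves the main inequality. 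The main point needing care is checking that the norm bound $\norm{C_\iota}\le\norm{W_{\lambda,\iota}}$ in Proposition \ref{prop : end} really survives the normalization $\lambda(0)=1$; it does, since the bound scales linearly in the multiplier.

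For the ``in particular'' part, the remark before the lemma gives $\norm{I},\norm{I^{-1}}\ge1$. Hence each of $\norm{I}$ and $\norm{I^{-1}}$ is at most the product $\norm{I}\norm{I^{-1}}$, which gives the bound on the maximum. Also $\min\set{\norm{I},\norm{I^{-1}}}^2\le\norm{I}\norm{I^{-1}}$, which gives the bound on the minimum.
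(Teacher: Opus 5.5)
Your proposal is correct and follows the paper's proof essentially step for step. The disc trick with the two unitaries $C_U,C_V$ produces the cube, Cartan's theorem reduces to an invertible multiplication operator $M_\mu$, and the constant tracked through Proposition \ref{prop : end} gives $\av{\mu(0)}\norm{I}\le\norm{M_\mu}$ and $\norm{I^{-1}}\le\av{\mu(0)}\norm{M_\mu^{-1}}$, so $\av{\mu(0)}$ cancels. The differences are cosmetic: you remove the unitary symbol with $C_{\psi^{-1}}$ on the right instead of the paper's $C_R$ on the left, and you spell out the weak lower semicontinuity behind the norm bound in Proposition \ref{prop : end}.
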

\begin{proof}
    Supposed that $W_{\lambda,\varphi}:\cH_2\to\cH_1$ is invertible.
    As both spaces have radius $r$, we see that $\varphi\in\on{Aut}(r\bB_d)$.
    By the ``disc trick'' (Proposition \ref{lem : Orr}) and Cartan's theorem, there exist $R,U,V\in\mathcal U(d)$ such that 
    $$M_\mu = C_RW_{\lambda,\varphi}C_UW_{\lambda,\varphi}^{-1}C_VW_{\lambda,\varphi}:\cH_2\to\cH_1$$
    is an invertible multiplication operator. 
    Since the operators $C_R,C_U,C_V$ are unitary we obtain
    \begin{equation}\label{eq:1}
      \norm{M_\mu}\le \norm{W_{\lambda,\varphi}}^2\norm{W_{\lambda,\varphi}^{-1}}.  
    \end{equation}
    Moreover, $M_\mu^{-1}=W_{\lambda,\varphi}^{-1} C_V^{-1}W_{\lambda,\varphi}C_U^{-1}W_{\lambda,\varphi}^{-1}C_R^{-1}$, so
    \begin{equation}
        \norm{M_\mu^{-1}}\le \norm{W_{\lambda,\varphi}}\norm{W_{\lambda,\varphi}^{-1}}^2.
    \end{equation}
    Since $M_\mu$ is invertible, $\mu(0)\neq 0$ and by the notation at the proof of Proposition \ref{prop : end} we observe that $I=C_\iota$ with 
    \begin{equation}
        \av{\mu(0)}\norm{I}\le \norm{M_\mu}
    \end{equation}
    Applying the same result to $M_\mu^{-1}=M_{\nicefrac{1}{\mu}}$ gives
    \begin{equation}\label{eq:4}
        \frac{1}{\av{\mu(0)}}\norm{ I^{-1}}\le \norm{M_\mu^{-1}} 
    \end{equation}
    Combining \eqref{eq:1}-\eqref{eq:4}, we obtain 
    \begin{equation}\label{eq:5}
      \norm{I}\norm{I^{-1}}\le \norm{M_\mu}\norm{M_\mu^{-1}}\le \pare{\norm{W_{\lambda,\varphi}}\norm{W_{\lambda,\varphi}^{-1}}}^3.  
    \end{equation}
    Since every RKHS isomorphism $T:\cH_1\to\cH_2$ induces a weighted composition operator $T^\ast:\cH_2\to\cH_1$ with the same operator norm and \eqref{eq:5} holds for every invertible weighted composition operator, taking the infimum over all such operators yields 
    $$\min\set{\norm{I}^2,\norm{I^{-1}}^2}\le\norm{I}\norm{I^{-1}}\le\delta_{\on{RK}}\pare{\cH_1,\cH_2}^3.$$
    The last estimate is obtained by the normalization of $\textbf{1}$.
\end{proof}
Using Theorem \ref{thm : big 4} and the results in \cite{hartz2025weighted}, one can classify all the weighted Hardy spaces in $d$ variables and finite radius with the property that for every automorphism $\psi$ of its natural maximal domain, there exists a self-unitary weighted composition operator with composition symbol $\psi$. 
For these types of spaces, we can find a sharp estimate for $\delta_{\on{RK}}$.
\begin{proposition}
    Assume that $\cH$ is a weighted Hardy space in $d$ variables and radius $r\in(0,\infty)$.
    Suppose that $\cH$ has the property that for every automorphism $\psi$ of its natural maximal domain, there exists a unitary weighted composition operator $W_{\delta,\psi}:\cH\to\cH$.
    Let $\cK$ be an element in $[\cH]_r$ and denote by $I:\cH\to\cK$ the identity map, then
    $$\norm{I}\norm{I^{-1}}=\delta_{\on{RK}}(\cH,\cK).$$
\end{proposition}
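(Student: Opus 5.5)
Since the identity map $I$ composed with its adjoint is an RKHS isomorphism ($I^\ast:\cK\to\cH$ has adjoint the identity, a weighted composition operator with $\lambda\equiv 1$, $\varphi=\on{id}$), and $I$ is invertible because $\cK\in[\cH]_r$ (proof of Theorem \ref{thm : big1}), the definition of $\delta_{\on{RK}}$ gives $\delta_{\on{RK}}(\cH,\cK)\le \norm{I}\norm{I^{-1}}$ at once. So the whole task is the reverse inequality: for every invertible weighted composition operator $W_{\lambda,\varphi}:\cK\to\cH$ (equivalently every RKHS isomorphism, by passing to adjoints) we want $\norm{I}\norm{I^{-1}}\le \norm{W_{\lambda,\varphi}}\norm{W_{\lambda,\varphi}^{-1}}$, improving the cubic loss of Lemma \ref{lem:dis} to no loss.

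The plan is to use the symmetry hypothesis to replace the ``disc trick'' step. Since both spaces have radius $r$, Proposition \ref{prop : dim} gives $\varphi\in\on{Aut}(X)$, where $X$ is the common natural maximal domain. By hypothesis there is a unitary $W_{\delta,\varphi^{-1}}:\cH\to\cH$. Then $M:=W_{\delta,\varphi^{-1}}W_{\lambda,\varphi}:\cK\to\cH$ is an invertible weighted composition operator whose composition symbol is $\varphi\circ\varphi^{-1}=\on{id}$ (checking the order of composition: $W_{\delta,\varphi^{-1}}W_{\lambda,\varphi}f=\delta\cdot(\lambda\circ\varphi^{-1})\cdot f$), i.e.\ a multiplication operator $M_\mu$ with $\mu=\delta\cdot\lambda\circ\varphi^{-1}$. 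Because $W_{\delta,\varphi^{-1}}$ is unitary, $\norm{M_\mu}=\norm{W_{\lambda,\varphi}}$ and $\norm{M_\mu^{-1}}=\norm{W_{\lambda,\varphi}^{-1}}$. This is where the cube disappears: we use one unitary instead of the three-fold product from Proposition \ref{lem : Orr}.

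Next apply the averaging argument of Proposition \ref{prop : end} to $M_\mu$ and to $M_\mu^{-1}=M_{1/\mu}$: since $\mu$ never vanishes (Lemma \ref{lem : inj}), $\mu(0)\ne0$, and the proof there shows $\av{\mu(0)}\norm{I}\le\norm{M_\mu}$ (the Riemann averages $\mu_n$ of rotations of $\mu/\mu(0)$ satisfy $\norm{W_{\mu_n,\iota}}\le\norm{M_\mu}/\av{\mu(0)}$ and $\mu_n\to1$ pointwise, so the weak limit gives $\norm{I}\le\norm{M_\mu}/\av{\mu(0)}$); similarly $\av{\mu(0)}^{-1}\norm{I^{-1}}\le\norm{M_\mu^{-1}}$. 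Multiplying, $\norm{I}\norm{I^{-1}}\le\norm{M_\mu}\norm{M_\mu^{-1}}=\norm{W_{\lambda,\varphi}}\norm{W_{\lambda,\varphi}^{-1}}$, and taking the infimum over all RKHS isomorphisms (whose adjoints are exactly such $W_{\lambda,\varphi}$, with equal norms) yields $\norm{I}\norm{I^{-1}}\le\delta_{\on{RK}}(\cH,\cK)$.

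The main point to check carefully is the norm bound extracted from Proposition \ref{prop : end}: one must verify that conjugating by the unitaries $C_{U_t}$ on $\cH$ and on $\cK$ (both are weighted Hardy spaces, so Corollary \ref{lem : unitary} applies) preserves the norm of a multiplication operator between the two spaces, so that the averages stay bounded by $\norm{M_\mu}/\av{\mu(0)}$, and that the weak-limit argument really bounds $\norm{I}$ by $\liminf\norm{W_{\mu_n,\iota}}$ via lower semicontinuity of the norm. A secondary subtlety is the direction of maps ($I:\cH\to\cK$ versus $\cK\to\cH$), but the product $\norm{I}\norm{I^{-1}}$ is symmetric, so it suffices to run the argument in one direction.
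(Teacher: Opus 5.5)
Your proposal is correct and is essentially the paper's proof. You compose $W_{\lambda,\varphi}$ with the unitary $W_{\delta,\varphi^{-1}}$ to get an invertible multiplication operator $M_\mu$ with $\norm{M_\mu}=\norm{W_{\lambda,\varphi}}$ and $\norm{M_\mu^{-1}}=\norm{W_{\lambda,\varphi}^{-1}}$, apply the bounds $\av{\mu(0)}\norm{I}\le\norm{M_\mu}$ and $\av{\mu(0)}^{-1}\norm{I^{-1}}\le\norm{M_\mu^{-1}}$ from Proposition \ref{prop : end} exactly as in Lemma \ref{lem:dis}, and take the infimum. The differences are cosmetic: you orient $W_{\lambda,\varphi}$ as $\cK\to\cH$, so the unitary sits on the left rather than on the right as in the paper's $M_\mu=W_{\lambda,\varphi}W_{\delta,\varphi^{-1}}$, and you spell out the easy inequality $\delta_{\on{RK}}(\cH,\cK)\le\norm{I}\norm{I^{-1}}$ via the RKHS isomorphism induced by the identity, which the paper leaves to the definition.
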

\begin{proof}
    Suppose that $W_{\lambda,\varphi}:\cH\to\cK$ is invertible. Since $\varphi^{-1}\in \on{Aut}(r\bB_d)$, there exsits a unitary map $W_{\delta,\varphi^{-1}}:\cH\to\cH$. 
    Hence, $M_\mu=W_{\lambda,\varphi}W_{\delta,\varphi^{-1}}$ is invertible with $\norm{M_\mu}=\norm{W_{\lambda,\varphi}}$.
    We obtain as above
    $$\norm{I}\norm{I^{-1}}\le \norm{W_{\lambda,\varphi}}\norm{W_{\lambda,\varphi}^{-1}}.$$
    Thus, the result easily follows from the definition of $\delta_{\on{RK}}$.
\end{proof}
\begin{quest}
    Can we always find a minimizing sequence converging to $\delta_{\on{RK}}$ consisting of multiplication operators?
\end{quest}

\begin{corollary}\label{cor:end}
    $\rho_{\on{RK}}$ defines a metric on $[\cH]_r$.
\end{corollary}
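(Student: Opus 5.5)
We have to verify the metric axioms for $\rho_{\on{RK}}=\log\delta_{\on{RK}}$ on $[\cH]_r$: finiteness and nonnegativity, symmetry, the triangle inequality, and that $\rho_{\on{RK}}(\cH_1,\cH_2)=0$ forces $\cH_1=\cH_2$. Finiteness holds because elements of $[\cH]_r$ are RKHS isomorphic to $\cH$, and hence to each other; compositions and inverses of RKHS isomorphisms are RKHS isomorphisms, as one reads off from $T(k^1_x)=\ol{\lambda(x)}k^2_{\varphi(x)}$. Nonnegativity follows from $\norm{T}\norm{T^{-1}}\ge\norm{TT^{-1}}=1$. Symmetry holds because $T\mapsto T^{-1}$ is a bijection between the RKHS isomorphisms $\cH_1\to\cH_2$ and those $\cH_2\to\cH_1$, and it preserves $\norm{T}\norm{T^{-1}}$. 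For the triangle inequality, take RKHS isomorphisms $T:\cH_1\to\cH_2$ and $S:\cH_2\to\cH_3$. Then $ST$ is an RKHS isomorphism, since $ST(k^1_x)=\ol{\lambda(x)}\,\ol{\mu(\varphi(x))}k^3_{\psi(\varphi(x))}$ with a non-vanishing weight and a bijective symbol. Submultiplicativity gives $\norm{ST}\norm{(ST)^{-1}}\le\norm{S}\norm{S^{-1}}\norm{T}\norm{T^{-1}}$; taking infima yields $\delta_{\on{RK}}(\cH_1,\cH_3)\le\delta_{\on{RK}}(\cH_1,\cH_2)\delta_{\on{RK}}(\cH_2,\cH_3)$, and applying the logarithm gives the triangle inequality.

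The main point is definiteness, and this is exactly where we restrict to $[\cH]_r$ rather than to the full equivalence class, since by Theorem \ref{thm : big 4} rescaled spaces are at distance zero. Suppose $\delta_{\on{RK}}(\cH_1,\cH_2)=1$. Lemma \ref{lem:dis} applies, because both spaces have the same radius $r$, and it gives $\norm{I}\norm{I^{-1}}\le 1$ for the identity $I:\cH_2\to\cH_1$. As remarked before Lemma \ref{lem:dis}, the normalization $\norm{\textbf{1}}=1$ in both spaces gives $\norm{I}\ge1$ and $\norm{I^{-1}}\ge1$. Hence $\norm{I}=\norm{I^{-1}}=1$, so $I$ is a surjective isometry and $\norm{z_1^n}_{\cH_1}=\norm{z_1^n}_{\cH_2}$ for all $n$. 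Since $\norm{z_1^n}^{-2}$ is the $n$th coefficient of the generating function, the generating functions $G_1$ and $G_2$ coincide. By the bijective correspondence between generating functions and weighted Hardy spaces in $d$ variables, $\cH_1=\cH_2$.

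Alternatively, one can obtain definiteness from Theorem \ref{thm : big 4}. If $\delta_{\on{RK}}=1$, then $\norm{I}\norm{I^{-1}}=1$, so by the above there is a unitary $I:\cH_2\to\cH_1$, which is a weighted composition operator. Theorem \ref{thm : big 4} then gives $a_n=b_n\gamma^n$. Equal radii force $\gamma=1$, because the radius of convergence scales by $\gamma$ (as in the proof of Theorem \ref{thm : big3}). I would present the Lemma \ref{lem:dis} route, which is more direct, and note that the only subtle step is the reduction $\delta_{\on{RK}}=1\Rightarrow\norm{I}\norm{I^{-1}}=1$. That step is precisely what Lemma \ref{lem:dis} supplies, via its cubic bound combined with the lower bound of $1$ on each factor.
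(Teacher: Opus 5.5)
Your proof is correct and follows the paper's route: definiteness is the only substantive axiom, and Lemma \ref{lem:dis} forces $\norm{I}\norm{I^{-1}}\le 1$. The normalization of $\textbf{1}$ then makes the identity map unitary, so the monomial norms, hence the generating functions, hence the spaces coincide. You also spell out the routine axioms (finiteness, symmetry, and the triangle inequality via composition of RKHS isomorphisms) and the final passage from a unitary identity map to equality of the spaces, all of which the paper leaves implicit.
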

\begin{proof}
    The only non trivial part is to show that $\rho_{\on{RK}}(\cH_1,\cH_2)=0$ implies that $\cH_1$ and $\cH_2$ are the same space. 
    Assume that $\rho_{\on{RK}}(\cH_1,\cH_2)=0$, then $\delta_{\on{RK}}(\cH_1,\cH_2)=1$.
    By Lemma \ref{lem:dis}, we obtain that the identity map between $\cH_1$ and $\cH_2$ is a unitary map. 
\end{proof}
\begin{remark}
    Notice that in the first example in Example \ref{ex:1.1}, we find two different spaces that are isometrically strongly related.
    Hence, $\rho_{\on{RK}}$ does not define a metric on $[\cH]_\infty$ with the analog definition.
\end{remark}
We finish the section by combining the estimates we find for the Banach-Mazur distance.
\begin{theorem}\label{thm:est}
    Let $\cH_1$ and $\cH_2$ be elements in $[\cH]_r$.
    Assume that $G_1(t)=\sum_{n=0}^\infty a_nt^n$ is the generating function of $\cH_1$ and $G_2(t)=\sum_{n=0}^\infty b_nt^n$ is the generating function of $\cH_2$.
    Set $M=\max\set{\sup_n \frac{b_n}{a_n},\sup_n\frac{a_n}{b_n}}$ and $m=\min\set{\sup_n \frac{b_n}{a_n},\sup_n\frac{a_n}{b_n}}$, then
    $$\max\set{\frac{1}{6}\log M,\frac{1}{3}\log m}\le \rho_{\on{RK}}(\cH_1,\cH_2)\le\frac{1}{2}\log Mm.$$
\end{theorem}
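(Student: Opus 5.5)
Both estimates come from comparing the identity map with $\delta_{\on{RK}}$. The upper bound uses Lemma \ref{lem:ed}; the lower bound uses Lemma \ref{lem:dis}.

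\textbf{Norms of the identity.} Let $I:\cH_2\to\cH_1$ be the identity map. As noted before Lemma \ref{lem:dis}, it is bounded and invertible, since both spaces lie in $[\cH]_r$. The monomials are orthogonal in both spaces, and $\norm{z^\alpha}^2_{\cH_i}$ depends on $\alpha$ only through the factor $\alpha!/\av{\alpha}!$ and the weight of degree $\av{\alpha}$. So the norm of $I$ is computed by a supremum of ratios over monomials, which gives
$$\norm{I}^2=\sup_n\frac{\norm{z_1^n}^2_{\cH_1}}{\norm{z_1^n}^2_{\cH_2}}=\sup_n\frac{b_n}{a_n},\qquad \norm{I^{-1}}^2=\sup_n\frac{a_n}{b_n}.$$
Hence $\max\set{\norm{I}^2,\norm{I^{-1}}^2}=M$ and $\min\set{\norm{I}^2,\norm{I^{-1}}^2}=m$. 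Both suprema are finite, because each is the square of an operator norm. Since $a_0=b_0=1$, we have $M,m\ge 1$.

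\textbf{Lower bound.} Lemma \ref{lem:dis} gives $\max\set{\norm{I},\norm{I^{-1}}}\le\delta_{\on{RK}}^3$, so $M\le\delta_{\on{RK}}^6$, i.e. $\frac16\log M\le\rho_{\on{RK}}$. The same lemma gives $\min\set{\norm{I}^2,\norm{I^{-1}}^2}\le\delta_{\on{RK}}^3$, so $m\le\delta_{\on{RK}}^3$, i.e. $\frac13\log m\le\rho_{\on{RK}}$. Alternatively, $\norm{I}\norm{I^{-1}}=\sqrt{Mm}\ge m$ together with $\norm{I}\norm{I^{-1}}\le\delta_{\on{RK}}^3$ gives the same inequality.

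\textbf{Upper bound.} Apply Lemma \ref{lem:ed} with $\gamma=1$, $C=\sup_n b_n/a_n$ and $c=(\sup_n a_n/b_n)^{-1}$. Then $c\le b_n/a_n\le C$ for all $n$, so
$$\delta_{\on{RK}}^2\le \frac{C}{c}=\sup_n\frac{b_n}{a_n}\cdot\sup_n\frac{a_n}{b_n}=Mm.$$
Equivalently, we can use the RKHS isomorphism $I^\ast$ directly: $\delta_{\on{RK}}\le\norm{I}\norm{I^{-1}}=\sqrt{Mm}$. Taking logarithms gives $\rho_{\on{RK}}\le\frac12\log(Mm)$.

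The main point to check is the identification of $\norm{I}$ with the supremum over $z_1^n$ in several variables. Write $f=\sum_\alpha f_\alpha z^\alpha$. Orthogonality of the monomials gives $\norm{f}^2_{\cH_1}=\sum_\alpha\av{f_\alpha}^2\norm{z^\alpha}^2_{\cH_1}$. The ratio $\norm{z^\alpha}^2_{\cH_1}/\norm{z^\alpha}^2_{\cH_2}$ equals $b_{\av{\alpha}}/a_{\av{\alpha}}$, because the factor $\alpha!/\av{\alpha}!$ cancels. Therefore $\norm{f}^2_{\cH_1}\le\sup_n (b_n/a_n)\,\norm{f}^2_{\cH_2}$, and the monomials $z_1^n$ show this constant is attained in the supremum. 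Everything else follows directly from the two lemmas.
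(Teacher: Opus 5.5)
Your proposal is correct and follows the paper's proof: you identify $M$ and $m$ as the $\max$ and $\min$ of $\norm{I}^2$ and $\norm{I^{-1}}^2$, then use Lemma \ref{lem:ed} (with $\gamma=1$, or equivalently the RKHS isomorphism $I^\ast$) for the upper bound and Lemma \ref{lem:dis} for the lower bound. Your deduction $M^{1/2}=\max\set{\norm{I},\norm{I^{-1}}}\le\delta_{\on{RK}}^3$, hence $M\le\delta_{\on{RK}}^6$, is also the correct form of the step that the paper misprints as $\max\set{M^2,m}\le\delta_{\on{RK}}^3$.
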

\begin{proof}
    Let $I:\cH_2\to\cH_1$ be the identity map.
    Observe that, $M=\max\set{\norm{I}^2,\norm{I^{-1}}^2}$ and $m=\min\set{\norm{I}^2,\norm{I^{-1}}^2}$. 
    Thus, $\rho_{\on{RK}}(\cH_1,\cH_2)\le\frac{1}{2}\log Mm$ by Lemma \ref{lem:ed}.
    Lemma \ref{lem:dis} shows that $\max\set{M^2,m}\le \delta_{\on{RK}}(\cH_1,\cH_2)^3$ which implies that $\max\set{\frac{1}{6}\log M,\frac{1}{3}\log m}\le \rho_{\on{RK}}(\cH_1,\cH_2)$ and finishes the proof. 
\end{proof}

\section{Open problems}
We conclude by highlighting several open questions.
These questions reflect the directions which, in the author's view, are the most natural next
steps in understanding the relations studied in this paper. 
\begin{problem}
The following problems remain open:
    \begin{itemize}
        \item Does a weak relation between weighted Hardy spaces imply a strong relation?

        \item Classify the faithful subsets of the natural maximal domain for a general weighted Hardy space.
        
        \item Classify weighted Hardy spaces of entire functions with more than one variable.

        \item Can we find a better estimates for $\rho_{\on{RK}}$?
    \end{itemize}
\end{problem}
Let us add some motivation for these questions.
The author believes that weighted Hardy spaces in different numbers of variables cannot be weakly related. If this is the case, then any weak relation between weighted Hardy spaces would already force a strong relation. 
From this point of view, classifying faithful subsets seems a promising approach to the problem.

\subsection*{Acknowledgements.} 
I want to express my sincere gratitude to Orr Shalit for his guidance, patience, and tremendous support throughout this work. 
This paper grew out of my master's thesis, which I wrote under his supervision.

\bibliographystyle{plain}
\bibliography{references.bib}

\end{document}